\providecommand{\indicator}{\mathbbm{1}}
\providecommand{\Phix}{\Phi(\cdot)}
\definecolor{darkgreen}{rgb}{0,0.6,0}
\definecolor{colorA}{rgb}{0.6,0.0,0.5}
\providecommand{\opac}{0.5}
\title{Lavrentiev gap for some classes of generalized Orlicz functions}
\author{Anna Kh.~Balci}
\author{Mikhail Surnachev }
\address{Anna Kh.~Balci, University Bielefeld, Universit\"atsstrasse 25, 33615
  Bielefeld, Germany.}
\email{akhripun@math.uni-bielefeld.de}
\address{Mikhail Surnachev, Keldysh Institute of Applied Mathematics,  Miusskaya sq. 4, 125047
  Moscow,  Russia.}
\email{peitsche@yandex.ru}
\thanks{Anna Kh.Balci  thanks German Research Foundation (DFG) for support  through the CRC 1283.  The research of Mikhail Surnachev was supported by the Russian Science Foundation under grant 19-71-30004
}
\keywords{Lavrentiev phenomenon; nonlinear elliptic equations;  double phase potential, generalized Orlicz functions}
\subjclass[2010]{%
35J60, %   	Nonlinear elliptic equations
46E35, %   Sobolev spaces and other smooth functions,  traces, embeddings
35J20, %   	Variational methods for second-order elliptic equations
35J60. %  Nonlinear elliptic equations
}
\begin{document}

\begin{abstract} 
 In the present paper we find optimal conditions separating the regular case from the one with Lavrentiev gap for the borderline case of double phase  potencial and related general classes of integrands. We present new results on density of smooth functions.
\end{abstract}
\maketitle

\section{Introduction}

During last decade  the resurgence of interest in different general growth models  has been experienced. Along with by now almost classical variable~$p(x)$-integrand, presented in hundred of papers and several books, see ~\cite{DieHHR11,CruFio13,KokMesRafSam16},  different properties of other models were considered. The essential feature of these model is possible the presence of a Lavrentiev gap and related with this lack of regularity, non-density of smooth functions in the corresponding energy space and others.

Positive recent results in this direction are sufficiently many and varied. For example, Colombo and Mingione in~\cite{ColMin15} obtained the regularity results for double-phase potential model~$\Phi(x,t)=\frac 1 p t^p+\frac 1 qa(x)t^q$ if $\frac{q}{p} \leq 1+\frac \alpha{d}$ and $a \in C^{0,\alpha}$. Moreover, bounded minimizers  are automatically~$W^{1,q}$
if~$a \in C^{0,\alpha}$ and~$q \leq p+\alpha$, see the paper~\cite{BarColMin18} by Baroni, Colombo and Mingione.  The sharpness of this results was showed by the authors of this paper and Lars Diening in~\cite{BalDieSur20} by constructing the examples of Lavrentiev gap for this model. The other model is weighed~$p$-energy~$\Phi(x,t)=\frac 1 p a(x) t^p$. If~$a$ itself is a Muckenhoupt weight, then it is well known that
smooth functions are dense, so
$W^{1,\Phix}(\Omega) = H^{1,\Phix}(\Omega)$. For other results on the
density in the context of weighted Sobolev spaces with even variable
exponents~$\Phi(x,t)=a(x)t^{p(x)}$, we refer to~\cite{Sur14,ZhiSur16}. The gradient estimates  for the borderline case of double phase problems with BMO coefficients in nonsmooth domains were obtained by Byun and  Oh in~\cite{ByuOh17}.  Skrypnik and   Voitovych recently proved pointwise continuity of solutions for a general class of elliptic and parabolic equations with nonstandard growth conditions  using the De Giorgi-Ladyzhenskaya-Ural'tseva classes, see ~\cite{skr20}.  More models and the extensive list of the references on the generalized Orlicz functions could be found in the book by Harjulehto and  H\"ast\"o~\cite{HarHas19}.  In the present   paper we study the integrands of the form 
\begin{align}
\label{eq:Phipab}
\Phi_{p,\alpha,\beta}(x,t) \sim \frac{1}{p}t^p \log^{-\beta} (e+t) + a(x) \frac{1}{p}t^p \log^\alpha (e+t)%,\quad -\beta\le \alpha.
\end{align}
and in particular for~$p=2$ and ~\begin{align}
  \label{eq:Phi}                                          
\Phi_{\alpha,\beta}(x,t)= \Phi_{2,\alpha,\beta}(x,t)\sim \frac{1}{2}t^2 \log^{-\beta} (e+t) + a(x) \frac{1}{2}t^2 \log^\alpha (e+t).%, \quad -\beta\le \alpha.                 
\end{align}
where $a$ is a non negative bounded weight. The regularity properties of the integrand of this type for~$\Phi_{0,1}(x,t)$ were studied by Baroni,  Colombo,  Mingione in~\cite{BarColMin15} where it  was called~"the borderline case of double phase potential". In particular they obtained the ~$C^{0,\gamma}_{\loc}$ regularity result for the minimizers provided that the weight~$a(x)$ is~$\log$-H\"older continuous (with some~$\gamma$) and more strong result (any~$\gamma\in (0,1)$) for the case of vanishing~$\log$-Hölder continuous  weight. In comparison with these results we obtain regularity results in the sense of density of smooth functions even if the weight is not continuous. The main result is contained in Theorem~\ref{thm:density}. It gives the full description of the checkerboard-type geometry for the borderline case of  the double-phase potential~$\Phi_{\alpha,\beta}$. More precisely, we give the necessary and sufficient conditions on the parameters~$\alpha,\beta$  for the density of smooth functions.

The crucial point for the study of regularity for these classes of problems is possible Lavrentiev gap. The first example in this direction is the famous Zhikov's checkerboard example  for variable exponent, see~\cite{Zhi86}. This example became the guiding principle for other
models. In 1995 Zhikov~\cite[Example~3.1]{Zhi95} considered the double phase potential, later  generalized by Esposito, Leonetti and Mingione
in~\cite{EspLeoMin04} to the case of higher dimensions and less
regular weights.  The general procedure to construct examples for Lavrentiev gap was presented by the authors of this paper and Lars Diening in~\cite{BalDieSur20}.    

We study the corresponding energy, which given by the integral functional 

\begin{align*}
\mathcal{F}(u)=\int_\Omega \Phi_{p,\alpha,\beta}(x,t)\, dx
\end{align*}
and closely related functionals 
\begin{equation}\label{funct_2}
\mathcal{G}(u) = \mathcal{F}(u) + \int_\Omega b\cdot \nabla u\, dx
\end{equation}

We provide examples of the Lavrentiev gap for~$\Phi_{\alpha,\beta}(x,t)$ using one-saddle point construction, which is similar to the initial one from Zhikov's checker-board examples. The energy~$\mathcal{F}$ defines a
generalized Sobolev-Orlicz space~$W^{1,\Phix}(\Omega)$ and its
counterpart~$W^{1,\Phix}_0(\Omega)$ with zero boundary values, see
Section~\ref{ssec:gener-orlicz-space} for the precise definition of
the spaces. Then the above Lavrentiev gap can be also written as 
\begin{align}
\label{eq:E12}
 \mathcal{E}_1:= \inf \mathcal{G}\big(W^{1,\Phix}_0(\Omega) \big)
  &<
    \inf \mathcal{G}\big(H^{1,\Phix}_0(\Omega)\big):= \mathcal{E}_2,
\end{align}
where $H^{1,\Phix}_0(\Omega)$ is the closure of~$C^\infty_0(\Omega)$
 functions in~$W^{1,\Phix}(\Omega)$.

In the second part of the paper we study more general integrands of  double phase potential type
\begin{equation}\label{Phi_def1}
 \Phi(x,t):=\phi(t)+a(x)\psi(t).
\end{equation}
The main result is formulated in the Theorems~\ref{thm3}, \ref{thm:density1}.

\section{Energy and Generalized Orlicz Spaces}
\label{ssec:gener-orlicz-space}

In this section we introduce the necessary function spaces, the so
called generalized Orlicz and Orlicz-Sobolev spaces.

We assume that~$\Omega \subset \Rd$ is a Lipschitz domain of finite
measure.  Later in our
applications we will only use~$\Omega= (-1,1)^2$.

We say that~$\oldphi\,:\, [0,\infty) \to [0,\infty]$ is an Orlicz
function if $\oldphi$ is convex, left-continuous, $\oldphi(0)=0$,
$\lim_{t \to 0} \oldphi(t)=0$ and
$\lim_{t \to \infty} \oldphi(t)=\infty$.  The conjugate Orlicz
function~$\oldphi^*$ is defined by
\begin{align*}
  \oldphi^*(s) &:= \sup_{t \geq 0} \big( st - \oldphi(t)\big).
\end{align*}
In particular, $st \leq \oldphi(t) + \oldphi^*(s)$.

In the following we assume that
$\Phi\,:\, \Omega \times [0,\infty) \to [0,\infty]$ is a generalized
Orlicz function, i.e. $\Phi(x, \cdot)$ is an Orlicz function for
every~$x \in \Omega$ and $\Phi(\cdot,t)$ is measurable for
every~$t\geq 0$. We define the conjugate function~$\Phi^*$ point-wise,
i.e.  $\Phi^*(x,\cdot) := (\Phi(x,\cdot))^*$.

We further assume the following additional properties:
\begin{enumerate}
\item We assume that~$\Phi$ satisfies the $\Delta_2$-condition,
  i.e. there exists~$c \geq 2$ such that for
  all~$x \in \Omega$ and all~$t\geq 0$
  \begin{align}
    \label{eq:phi-Delta2}
    \Phi(x,2t) &\leq c\, \Phi(x,t). 
  \end{align}
\item We assume that~$\Phi$ satisfies the~$\nabla_2$-condition,
  i.e. $\Phi^*$ satisfies the~$\Delta_2$-condition. As a consequence,
  there exist~$s>1$ and~$c> 0$ such that for all $x\in \Omega$,
  $t\geq 0$ and $\gamma \in [0,1]$ there holds
  \begin{align}
    \label{eq:phi-Nabla2}
    \Phi(x,\gamma t) \leq c\,\gamma^s \,\Phi(x,t).
  \end{align}
\item We assume that $\Phi$ and~$\Phi^*$ are proper, i.e. for
  every~$t\geq 0$ there holds $\int_\Omega \Phi(x,t)\,dx< \infty$ and
  $\int_\Omega \Phi^*(x,t)\,dx < \infty$.
\end{enumerate}

We assume that
\begin{align*}
 -c_0+c_1\abs{t}^{p_-}\le \Phi(x,t)\le c_2\abs{t}^{p_+}+c_0,
\end{align*}
where~$1<p_-\le p_+<\infty$,~$c_0\ge 0$,~$c_1,c_2>0$.

Let $L^0(\Omega)$ denote the set of measurable function on~$\Omega$
and $L^1_{\loc}(\Omega)$ denote the space of locally integrable 
functions. 
We define the generalized Orlicz norm by
\begin{align*}
  \norm{f}_{L^{\Phix}(\Omega)} &:= \inf \biggset{\gamma > 0\,:\, \int_\Omega
                    \Phi(x,\abs{f(x)/\gamma})\,dx \leq 1}.
\end{align*}

Then generalized Orlicz space~$L^{\Phix}(\Omega)$ is defined as the
set of all measurable functions with finite generalized Orlicz norm
\begin{align*}
  L^{\Phix}(\Omega) &:= \bigset{f \in L^0(\Omega)\,:\, \norm{f}_{L^{{\Phix}}(\Omega)}<\infty
}.
\end{align*}

For example the generalized Orlicz function $\Phi(x,t) = t^p$
generates the usual Lebesgue space~$L^p(\Omega)$.

The $\Delta_2$-condition of~$\Phi$ and~$\Phi^*$ ensures that our space
is uniformly convex. The condition that~$\Phi$ and $\Phi^*$ are proper
ensure that $L^{\Phix}(\Omega) \embedding L^1(\Omega)$ and
$L^{\Phix}(\Omega) \embedding L^1(\Omega)$. Thus $L^{\Phix}(\Omega)$
and $L^{\Phix}(\Omega)$ are Banach spaces.

We define the generalized Orlicz-Sobolev space~$W^{1,\Phix}$ as
\begin{align*}
  W^{1,\Phix}(\Omega) &:= \set{w \in W^{1,1}(\Omega)\,:\,
                        \nabla w \in L^{\Phix}(\Omega)},
\end{align*}
with the norm
\begin{align*}
 \norm{w}_{W^{1, \Phi(\cdot)}(\Omega)}:=\norm{w}_{L^1(\Omega)}+\norm{\nabla w}_{ L^{\Phi(\cdot)}(\Omega)}.
\end{align*}

In general smooth functions are not dense
in~$W^{1,\Phix}(\Omega)$. We define~$H^{1,\Phix}(\Omega)$ as  
\begin{align*}
  H^{1,\Phix}(\Omega) &:= \big(\text{closure of~$C^\infty(\Omega) \cap W^{1,\Phix}(\Omega)$
                        in~$W^{1,\Phix}(\Omega)$}\big).
\end{align*}
See \cite{DieHHR11} and~\cite{HarHas19} for further properties of
these spaces.

We also introduce the corresponding spaces with zero boundary values
as
\begin{align*}
  W^{1,\Phix}_0(\Omega) &:= \set{w \in W^{1,1}_0(\Omega)\,:\,
                        \nabla w \in L^{\Phix}(\Omega)}
\end{align*}
with the  norm 
\begin{align*}
 \norm{w}_{W_0^{1, \Phi(\cdot)}(\Omega)}:=\norm{\nabla w}_{ L^{\Phi(\cdot)}(\Omega)},
\end{align*}
and   
\begin{align*}
  H_0^{1,\Phix}(\Omega) &:= \big(\text{closure of~$C^\infty_0(\Omega)$
                        in~$W_0^{1,\Phix}(\Omega)$}\big). 
\end{align*}
The space~$W_0^{1,\Phix}(\Omega)$ is exactly those function, which can
be extended by zero to~$W^{1,\Phix}(\Rd)$ functions.

Let us define our
energy~$\mathcal{F}\,:\, W^{1,\Phix}(\Omega) \to \setR$ by
\begin{align*}
  \mathcal{F}(w) &:= \int_\Omega \Phi(x,\abs{\nabla w(x)})\,dx.
\end{align*}
In the language of function spaces~$\mathcal{F}$ is a semi-modular
on~$W^{1,\Phix}(\Omega)$ and a modular on~$W^{1,\Phix}_0(\Omega)$.

\begin{definition}
\label{def:regular}
 An integrand~$\Phi(x,t)$ is said to be regular in the domain~$\Omega$ if for all~$u\in W^{1,\Phi(\cdot)}_0(\Omega)$ with ~$\mathcal{F}(u)<\infty$ there exists a smooth sequence~$u_\epsilon\in C_0^\infty(\Omega)$ such that
 \begin{enumerate}
  \item $u_\epsilon \to u$  in~$W_0^{1,1}(\Omega)$;
  \item $\lim_{\epsilon\to 0} \int_\Omega \Phi(x,\nabla u_\epsilon)\, dx=\int_\Omega \Phi(x,\nabla u)\, dx$.
 \end{enumerate}

\end{definition}
Direct from the Definition~\ref{def:regular} follows that if the integrand~$\Phi(x,t)$ is regular, then~$\mathcal{E}_1=\mathcal{E}_2 $, so there is no Lavrentiev gap. This is equivalent to $H_0^{1,\Phi(\cdot)}(\Omega)=W_0^{1,\Phi(\cdot)}(\Omega)$. Indeed, the reverse implication (from $H=W$ to regularity) is obvious. On the other hand, from Definition~\ref{def:regular} by Scheffe's theorem it follows that $\Phi(x,\abs{\nabla u_\epsilon})$ converges to $\Phi(x,\abs{\nabla u})$ in $L^1(\Omega)$. From this, convexity of $\Phi(x,\cdot)$ and the $\triangle_2$ condition it follows that $\Phi(x, \abs{\nabla u_\epsilon -\nabla u}) \lesssim \Phi (x,\abs{\nabla u_\epsilon}) +  \Phi(x,\abs{\nabla u})$ is equiintegrable, therefore it converges to zero. Thus $u_\epsilon$ is a sequence of $C_0^\infty(\Omega)$ functions approximating $u$ in $W_0^{1,\Phi(\cdot)}(\Omega)$, and so $H_0^{1,\Phi(\cdot)}(\Omega)=W_0^{1,\Phi(\cdot)}(\Omega)$.

We use the following lemma due to Zhikov. 

\begin{lemma}[\cite{Zhi95}, Lemma 2.1]
\label{lemma:zhikov}
Let~$\Omega$ be the star-shaped with respect to the origin domain. Assume, that there exist functions~$\Phi_\epsilon(x,t)$ such that~$\Phi_\epsilon$ are convex with respect to~$t$ and measurable with respect to~$x$ and let the following relations hold. 
\begin{enumerate}
 \item\label{eq:1} $\Phi_\epsilon(x,0)=0$; 
 \item\label{eq:2} $c_1 \Phi(x,t)\le \Phi_\epsilon(x,t)+c_3$ for~$x\in \bar{\Omega}$,~$t\le M\epsilon^{- \frac d {p_-} }$;
 \item \label{eq:3} $\Phi_\epsilon(x,t)\le c_2\Phi(x,t)+c_3$ for~$\abs{x-y}\le 2k\epsilon$,~$t\in\setR_+$,
\end{enumerate}
with~$k,M,c_1,c_2,c_3>0$ and~$0<\epsilon< \epsilon_0$. Then the integrand~$\Phi(x,t)$ is regular.
 \end{lemma}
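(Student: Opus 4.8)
\emph{Strategy.} The plan is to prove that $\Phi$ is regular in the sense of Definition~\ref{def:regular}: given $u\in W^{1,\Phix}_0(\Omega)$ with $\mathcal{F}(u)<\infty$, I will exhibit $u_\epsilon\in C^\infty_0(\Omega)$ with $u_\epsilon\to u$ in $W^{1,1}_0(\Omega)$ and $\mathcal{F}(u_\epsilon)\to\mathcal{F}(u)$. Two reductions come first. From $\Phi(x,t)\ge -c_0+c_1\abs{t}^{p_-}$ and $\mathcal{F}(u)<\infty$ one gets $\nabla u\in L^{p_-}(\Omega)$, and (extending by zero) $\nabla u\in L^{p_-}(\Rd)$. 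Since $H^{1,\Phix}_0(\Omega)$ is a linear subspace and the Luxemburg norm is homogeneous, it is enough to approximate $\delta u$ for a small $\delta>0$ and then divide the approximants by $\delta$; hence I may assume $\norm{\nabla u}_{L^{p_-}(\Omega)}\le\eta$, where $\eta>0$ is a small number, fixed later in terms of $k,M,p_-,d$, $\Omega$ and the mollifier. Secondly, by the $\Delta_2$/$\nabla_2$ assumptions on $\Phi$, modular convergence $\int_\Omega\Phi(x,\abs{\nabla u_\epsilon-\nabla u})\,dx\to0$ is equivalent to convergence $\nabla u_\epsilon\to\nabla u$ in $L^{\Phix}(\Omega)$ and implies $\mathcal{F}(u_\epsilon)\to\mathcal{F}(u)$; so it suffices to establish this modular convergence, together with $u_\epsilon\to u$ in $W^{1,1}_0(\Omega)$.

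\emph{Construction.} I take $u_\epsilon$ to be a mollification followed by a contraction toward the origin that uses the star-shapedness of $\Omega$:
\begin{align*}
u_\epsilon(x):=(u*\rho_{r_\epsilon})\Big(\tfrac{x}{\lambda_\epsilon}\Big),
\qquad
\nabla u_\epsilon(x)=\tfrac{1}{\lambda_\epsilon}\,\big((\nabla u)*\rho_{r_\epsilon}\big)\Big(\tfrac{x}{\lambda_\epsilon}\Big),
\end{align*}
where $\rho_r$ is a standard mollifier of radius $r$, and $r_\epsilon=\kappa\epsilon$, $\lambda_\epsilon=1-\mu\epsilon$ with $\kappa,\mu>0$ small fixed constants chosen so that $\kappa\le k$, $\mu\sup_{x\in\Omega}\abs{x}\le k$, and $r_\epsilon<\operatorname{dist}(\lambda_\epsilon\overline\Omega,\partial\Omega)$; the last inequality is achievable because a bounded domain star-shaped with respect to the origin satisfies $\operatorname{dist}(\lambda\overline\Omega,\partial\Omega)\gtrsim 1-\lambda$ (immediate for $\Omega=(-1,1)^2$), so both sides are of order $\epsilon$ with a fixed ratio. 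This guarantees $u_\epsilon\in C^\infty_0(\Omega)$. That $u_\epsilon\to u$ in $W^{1,1}_0(\Omega)$ is the soft part and involves no $\Phi$: continuity of mollification and of dilations on $L^{p_-}$ gives $\nabla u_\epsilon\to\nabla u$ in $L^{p_-}(\Rd)$, hence in $L^1(\Omega)$, and along a subsequence $\nabla u_\epsilon\to\nabla u$ a.e.

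\emph{The main estimate.} Write $g_\epsilon:=(\nabla u)*\rho_{r_\epsilon}$. The key observation, which is what makes the size restriction ``$t\le M\epsilon^{-d/p_-}$'' in hypothesis~\ref{eq:2} usable, is that mollification bounds $g_\epsilon$ \emph{uniformly in $x$}: by H\"older's inequality with exponents $p_-$ and $(p_-)'$,
\begin{align*}
\abs{g_\epsilon(w)}\ \le\ \norm{\rho_{r_\epsilon}}_{L^{(p_-)'}}\norm{\nabla u}_{L^{p_-}}\ \le\ C\,r_\epsilon^{-d/p_-}\eta\ =\ C\,\kappa^{-d/p_-}\eta\;\epsilon^{-d/p_-},
\end{align*}
so if $\eta$ is small enough (recall $\kappa$ is fixed) then $\lambda_\epsilon^{-1}\abs{g_\epsilon(w)}\le M\epsilon^{-d/p_-}$ for \emph{every} $w$, and no exceptional set appears. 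Now fix $x\in\Omega$, put $w:=x/\lambda_\epsilon$, and chain the hypotheses: the $\Delta_2$-condition absorbs the factor $\lambda_\epsilon^{-1}<2$, i.e. $\Phi(x,\abs{\nabla u_\epsilon(x)})=\Phi(\lambda_\epsilon w,\lambda_\epsilon^{-1}\abs{g_\epsilon(w)})\lesssim\Phi(\lambda_\epsilon w,\abs{g_\epsilon(w)})$; hypothesis~\ref{eq:2}, applicable by the uniform bound above, passes this to $\lesssim\Phi_\epsilon(\lambda_\epsilon w,\abs{g_\epsilon(w)})+c_3$; monotonicity together with $\abs{g_\epsilon(w)}\le(\abs{\nabla u}*\rho_{r_\epsilon})(w)$ and Jensen's inequality for the convex function $\Phi_\epsilon(\lambda_\epsilon w,\cdot)$ (here convexity and $\Phi_\epsilon(\cdot,0)=0$ are used) give $\Phi_\epsilon(\lambda_\epsilon w,\abs{g_\epsilon(w)})\le\int\Phi_\epsilon(\lambda_\epsilon w,\abs{\nabla u(y)})\,\rho_{r_\epsilon}(w-y)\,dy$; and finally hypothesis~\ref{eq:3}, legitimate because $\abs{y-\lambda_\epsilon w}\le\abs{y-w}+(1-\lambda_\epsilon)\abs{w}\le r_\epsilon+k\epsilon\le 2k\epsilon$ on the support of $\rho_{r_\epsilon}(w-\cdot)$, replaces $\Phi_\epsilon(\lambda_\epsilon w,\abs{\nabla u(y)})$ by $c_2\Phi(y,\abs{\nabla u(y)})+c_3$. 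Collecting constants,
\begin{align*}
\Phi\big(x,\abs{\nabla u_\epsilon(x)}\big)\ \le\ C\,\Big(\Phi(\cdot,\abs{\nabla u})*\rho_{r_\epsilon}\Big)\Big(\tfrac{x}{\lambda_\epsilon}\Big)+C
\end{align*}
with $C$ independent of $\epsilon$. Since $\Phi(\cdot,\abs{\nabla u})\in L^1(\Omega)$, the right-hand side converges in $L^1$ (mollification and dilation of a fixed $L^1$ function) and is therefore uniformly integrable on the bounded set $\Omega$, the additive constant doing no harm. Hence $\{\Phi(\cdot,\abs{\nabla u_\epsilon})\}_\epsilon$ is uniformly integrable; combined with $\nabla u_\epsilon\to\nabla u$ a.e.\ and $\Phi(x,\abs{\nabla u_\epsilon-\nabla u})\lesssim\Phi(x,\abs{\nabla u_\epsilon})+\Phi(x,\abs{\nabla u})$ (convexity and $\Delta_2$), Vitali's convergence theorem yields $\int_\Omega\Phi(x,\abs{\nabla u_\epsilon-\nabla u})\,dx\to0$, which completes the argument.

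\emph{Main obstacle.} The delicate point is exactly the restriction $t\le M\epsilon^{-d/p_-}$ in hypothesis~\ref{eq:2}: it forces the preliminary rescaling to small $\norm{\nabla u}_{L^{p_-}}$, and it dictates that \ref{eq:2} be invoked \emph{only} on the mollified gradient $g_\epsilon$ (which enjoys the uniform bound $\abs{g_\epsilon}\lesssim r_\epsilon^{-d/p_-}\norm{\nabla u}_{L^{p_-}}$), never on $\nabla u$ itself — which is why the mollification must precede everything else, and why the three scales $r_\epsilon$, $1-\lambda_\epsilon$ and $\epsilon$ must be tied together so that $r_\epsilon+(1-\lambda_\epsilon)\sup_\Omega\abs{x}\le 2k\epsilon$ while $r_\epsilon$ stays comparable to $\epsilon$ and the contracted support stays compactly inside $\Omega$. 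A further point to handle with care is that the straightforward estimate for $\mathcal{F}(u_\epsilon)$ comes out with the multiplicative constant $c_2/c_1\ge1$ and additive constants, and so does \emph{not} directly give $\limsup_\epsilon\mathcal{F}(u_\epsilon)\le\mathcal{F}(u)$; the exact energy convergence has to be recovered from uniform integrability of $\Phi(\cdot,\abs{\nabla u_\epsilon-\nabla u})$ rather than of $\Phi(\cdot,\abs{\nabla u_\epsilon})$.
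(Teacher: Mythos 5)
The paper cites this lemma from Zhikov's 1995 paper without reproducing its proof, so there is no in-paper argument to compare against; what you have written is a correct reconstruction of Zhikov's original argument, and I find no gaps. Your three ingredients --- the $L^\infty$ bound $\abs{g_\epsilon}\lesssim r_\epsilon^{-d/p_-}\norm{\nabla u}_{L^{p_-}}$ that licenses hypothesis~\ref{eq:2}, the Jensen step exploiting convexity and $\Phi_\epsilon(\cdot,0)=0$, and the geometric bookkeeping $\abs{y-\lambda_\epsilon w}\le r_\epsilon + (1-\lambda_\epsilon)\sup_\Omega\abs{x}\le 2k\epsilon$ that licenses hypothesis~\ref{eq:3} --- are exactly what the restriction $t\le M\epsilon^{-d/p_-}$ and the two-sided comparison between $\Phi$ and $\Phi_\epsilon$ are designed for. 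Two small remarks. First, as stated in the paper condition~\ref{eq:3} reads $\Phi_\epsilon(x,t)\le c_2\,\Phi(x,t)+c_3$, in which the quantifier $\abs{x-y}\le 2k\epsilon$ plays no role; the intended (and needed) form is $\Phi_\epsilon(x,t)\le c_2\,\Phi(y,t)+c_3$ for $\abs{x-y}\le 2k\epsilon$, which is how the Corollary in the paper verifies it ($a_\epsilon(x)\le a(y)$) and how you use it --- good that you silently corrected this. Second, your rescaling reduction ($u\mapsto\delta u$) is legitimate only because you prove modular convergence of $\Phi(x,\abs{\nabla u_\epsilon-\nabla u})$, which under $\Delta_2$ is equivalent to norm convergence and hence homogeneous; you note this, and it is indeed the reason the final energy identity $\mathcal{F}(u_\epsilon)\to\mathcal{F}(u)$ is recovered rather than lost to the multiplicative constant $c_2/c_1$ in the chained estimate. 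The only hypothesis you use implicitly beyond star-shapedness is $\operatorname{dist}(\lambda\overline\Omega,\partial\Omega)\gtrsim 1-\lambda$, which does not hold for arbitrary star-shaped domains but does hold for Lipschitz ones --- and the paper assumes $\Omega$ Lipschitz in Section~\ref{ssec:gener-orlicz-space}, so this is fine.
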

 The function~$\omega(x)$ is the modulus of continuity for the weight~$a(x)$ if
 \begin{align*}
  \abs{a(x)-a(y)}\le \omega(\abs{x-y}) \quad x,y\in\setR^d,\quad \abs{x-y}\le \frac 1 4.
 \end{align*}

By $B_r(x)$ we denote the  ball of radius~$r$ with center at the point~$x$. 
\begin{corollary}
\label{theorem:regularity}
 Let 
 \begin{align*}
  \Phi(x,t):=\phi(t)+a(x)\psi(t),
 \end{align*}
with Orlicz functions~$\phi(t)$, $\psi(t)$.
 Assume that the  weight~$a(x)$ is non-negative, bounded and  has the modulus of continuity
 %~$\log$-Hölder continuous:
  \begin{align}\label{ac_z1}
  \omega(\epsilon)\le k_0\frac{\phi(t)}{\psi(t)},\quad 1\le t\le \epsilon^{-d}.
\end{align}
Then the integrand~$\Phi_{\alpha,\beta}$ is regular.  In particular, $C_0^\infty(\Omega)$ is dense in~$W_0^{1,\Phi(\cdot)}(\Omega)$.
\end{corollary}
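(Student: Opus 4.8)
The plan is to obtain the statement as an application of Zhikov's Lemma~\ref{lemma:zhikov}: the whole argument reduces to exhibiting an admissible family $\Phi_\epsilon(x,t)$ and verifying the three conditions~\eqref{eq:1}--\eqref{eq:3}. First I would record the harmless normalizations that $\omega$ is non-decreasing and, since $a$ is bounded, that $\omega\le 2\norm{a}_{L^\infty(\Omega)}$; note also that $\phi,\psi$ are finite-valued (by the two-sided power bound on $\Phi$ and properness), the case $a\equiv 0$ being trivial. Fixing $k:=\tfrac12$ and $M:=1$, for each $\epsilon>0$ I would partition $\Omega$ into half-open cubes $\{Q\}$ of side $\epsilon$ and, for each such $Q$, let $Q^{*}\supseteq Q$ be the closed cube of side $3\epsilon$ with the same centre (so $Q^{*}$ contains $Q$ and its neighbours, $\operatorname{diam}Q^{*}=3\sqrt d\,\epsilon$, and $\set{y:\operatorname{dist}_\infty(y,Q)\le\epsilon}\subseteq Q^{*}$), and set
\begin{align*}
 a_\epsilon:=\sum_{Q}\Big(\inf_{Q^{*}}a\Big)\,\indicator_{Q},
 \qquad
 \Phi_\epsilon(x,t):=(1+k_0)\,\phi(t)+a_\epsilon(x)\,\psi(t),
\end{align*}
with $k_0$ the constant of~\eqref{ac_z1}. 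Then $\Phi_\epsilon(x,\cdot)$ is convex, $\Phi_\epsilon(\cdot,t)$ is measurable (a countable combination of indicators of the $Q$'s), $0\le a_\epsilon\le a$ pointwise, and for $x\in Q$ one has $a(x)-a_\epsilon(x)\le\sup_{Q^{*}}a-\inf_{Q^{*}}a\le\omega(3\sqrt d\,\epsilon)$ by the modulus of continuity.

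Conditions~\eqref{eq:1} and~\eqref{eq:3} would then be immediate: $\Phi_\epsilon(x,0)=(1+k_0)\phi(0)+a_\epsilon(x)\psi(0)=0$, and if $x\in Q$ and $\abs{x-y}\le 2k\epsilon=\epsilon$ then $y\in Q^{*}$, so $a_\epsilon(x)\le a(y)$ and
\begin{align*}
 \Phi_\epsilon(x,t)=(1+k_0)\phi(t)+a_\epsilon(x)\psi(t)\le(1+k_0)\bigl(\phi(t)+a(y)\psi(t)\bigr)=(1+k_0)\,\Phi(y,t),
\end{align*}
i.e.\ \eqref{eq:3} holds with $c_2=1+k_0$, $c_3=0$ (and since $a_\epsilon(x)\le a(x)$, the same bound holds with $\Phi(x,t)$ on the right). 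The substantive step is~\eqref{eq:2}, and it is here that the hypotheses are genuinely used. Because $p_->1$ we have $d/p_-<d$, so $M\epsilon^{-d/p_-}\le(3\sqrt d\,\epsilon)^{-d}$ for all $\epsilon$ below some $\epsilon_0>0$; for such $\epsilon$, $x\in\bar\Omega$ and $1\le t\le M\epsilon^{-d/p_-}$, I would invoke~\eqref{ac_z1} at the scale $3\sqrt d\,\epsilon$ to get $\omega(3\sqrt d\,\epsilon)\psi(t)\le k_0\phi(t)$ and hence
\begin{align*}
 \Phi_\epsilon(x,t)=\Phi(x,t)+k_0\phi(t)-\bigl(a(x)-a_\epsilon(x)\bigr)\psi(t)\ge\Phi(x,t)+k_0\phi(t)-\omega(3\sqrt d\,\epsilon)\psi(t)\ge\Phi(x,t),
\end{align*}
while for $0\le t\le 1$ the correction term is bounded by $\omega(3\sqrt d\,\epsilon)\psi(1)\le 2\norm{a}_{L^\infty(\Omega)}\psi(1)=:c_3$, so $\Phi_\epsilon(x,t)\ge\Phi(x,t)-c_3$. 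In both ranges $\Phi(x,t)\le\Phi_\epsilon(x,t)+c_3$, which is~\eqref{eq:2} with $c_1=1$.

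With all three conditions in hand, Lemma~\ref{lemma:zhikov} applies and gives that $\Phi$ is regular on $\Omega$; the density assertion then follows exactly as noted after Definition~\ref{def:regular}: by the $\Delta_2$-condition every $u\in W^{1,\Phi(\cdot)}_0(\Omega)$ has finite energy, hence admits a sequence $u_\epsilon\in C^\infty_0(\Omega)$ with $u_\epsilon\to u$ in $W^{1,1}_0(\Omega)$ and $\mathcal{F}(u_\epsilon)\to\mathcal{F}(u)$, and Scheff\'e's theorem together with convexity and $\Delta_2$ upgrades this to $u_\epsilon\to u$ in $W^{1,\Phi(\cdot)}_0(\Omega)$. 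I expect the only non-formal point — and the sole place where $p_->1$ is needed — to be ensuring, for small $\epsilon$, the compatibility of the admissible range $1\le t\le M\epsilon^{-d/p_-}$ forced by the $W^{1,p_-}$-scaling of mollification in Zhikov's lemma with the range $1\le t\le(3\sqrt d\,\epsilon)^{-d}$ on which the oscillation bound~\eqref{ac_z1} is available; the factor $1+k_0$ in $\Phi_\epsilon$ is merely a device making~\eqref{eq:2} hold with $c_1=1$ irrespective of the size of $k_0$, and the cube construction just keeps $a_\epsilon$ cleanly measurable.
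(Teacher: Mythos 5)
Your proof is correct and follows essentially the same route as the paper: both verify Zhikov's Lemma~\ref{lemma:zhikov} by replacing the weight $a$ with a lower envelope $a_\epsilon\le a$ built at scale $\epsilon$, using the monotonicity $a_\epsilon(x)\le a(y)$ for nearby $y$ to get condition~\ref{eq:3} and the hypothesis~\eqref{ac_z1} to get condition~\ref{eq:2}. The only cosmetic differences are that the paper takes $a_\epsilon(x)=\min_{\bar\Omega\cap B_\epsilon(x)}a$ directly (so the modulus applies at scale $\epsilon$ rather than $3\sqrt d\,\epsilon$) and keeps $\Phi_\epsilon(x,t)=\phi(t)+a_\epsilon(x)\psi(t)$, accepting $c_1=(1+k_0)^{-1}$, whereas you use a cube partition and multiply $\phi$ by $(1+k_0)$ to normalize $c_1=1$; you also spell out the range compatibility $M\epsilon^{-d/p_-}\le(3\sqrt d\,\epsilon)^{-d}$ for small $\epsilon$, which the paper leaves implicit. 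None of this changes the argument.
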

\begin{proof}
 We set 
 \begin{align*}
  \Phi_\epsilon(x,t):= \phi(t) + a_\epsilon(x) \psi(t)
 \end{align*}
with  
\begin{align*}
 a_\epsilon(x)=\min\set{a(y),\  y\in \bar{\Omega}\cap B_{\varepsilon} (x)} . 
\end{align*}
From the definition of~$a_\epsilon$ it follows that 
\begin{align*}
 a_\epsilon(x)\le a(y) \quad \text{ if }\abs{x-y}<\epsilon,
\end{align*}
and thus condition \ref{eq:3} of Lemma~\ref{lemma:zhikov} is verified.

Now, using~\eqref{ac_z1} we get 

\begin{align*}
\Phi(x,t) &\leq \phi(t) + a_\epsilon(x)\psi(t) + \omega(\epsilon)\psi(t) \\
&\leq \Phi_\epsilon(x,t) + k_0\phi(t)  \leq (k_0+1)  \Phi_\epsilon(x,t)
\end{align*}
for $1\le t \leq \varepsilon^{-d}$. Since~$\Phi(x,t)\le k_1$ for~$t\le 1$,  we see that the  condition~\ref{eq:2} of Lemma~\ref{lemma:zhikov} is fulfilled. It is easy to see that condition~\ref{eq:1} also holds. 
\end{proof}

 \begin{remark}
  Let us mention, that Corollary~\ref{theorem:regularity} holds  for any~$d$ independently on the dimension. Note, that the  further results  are for~$p=d=2$.
 \end{remark}
In particular we see that for~$\Phi_{p,\alpha,\beta}$ defined in~\eqref{eq:Phipab} all conditions of Corollary~\ref{theorem:regularity} are fulfilled provided that the weight~$a(x)$ has the modulus of continuity
 \begin{align*}
  \omega(r)\le \frac{k_0}{\log^{\alpha+\beta}(r^{-1})},\quad \text{if }r\le \frac 1 4.
 \end{align*}

From the point of view of functional spaces we are interested mostly in the Zygmund classes
\begin{align*}
L^p\log^\gamma L,\quad 1\le p<\infty, \quad \gamma\in \setR,
 \end{align*}
which were studied, for example by Iwaniec and Sbordone in  \cite[Section 18]{IwaSbo00}. These classes correspond to~$\phi(t)= t^p\log^\alpha (e+t)$. These Orlicz classes are convex if~$\gamma\ge 1-p$. This issue is not important for us, since we are only interested in the behaviour of the integrand for large values of~$t$, hence we can always replace it by an appropriate convex Orlicz substitution. 
The norm, defined as
\begin{align*}
\norm{f}_{L^p\log^\alpha L}:= \bigg(\int_\Omega \abs{f}^p\log^\alpha \big(e+\frac{\abs{f}}{\norm{f}_p} \big)\, dx \bigg)^{\frac 1 p} 
\end{align*}
is equivalent to the Luxemburg norm. 
The following H\"older-type inequality is valid 
\begin{align*}
 \norm{AB}_{L^c\log^\gamma L}\le C_{\alpha,\beta}(a,b) \norm{A}_{L^a \log^\alpha L} \norm{B}_{L^b \log^\beta L}
\end{align*}
for 
$$
\frac{1}{c} = \frac{1}{a} + \frac{1}{b}, \quad \frac{\gamma }{c} = \frac{\alpha}{a} + \frac{\beta}{b}.
$$
And for
\begin{align}
\label{eq:duality}
\phi(t)=\frac 1p \abs{t}^p \log^\gamma (e+\abs{t}),\quad \phi^*(t)\approx \frac{1}{p'} |t|^{p'}\log^{\frac \gamma {1-p}} (e+\abs{t}).
\end{align}
Also 
$$
\left(a(x) \abs{t}^p \log^\gamma (e+\abs{t}) \right)^* \approx a(x)^{1-p' } |t|^{p'} \log^{\frac \gamma {1-p}} \left(e+\frac{\abs{t}}{a(x)} \right).
$$

\section{Borderline case of double phase potential }
\label{sec:Checkerboard setup}
In this section we consider the integrand~$\Phi_{\alpha,\beta}$ defined in~\eqref{eq:Phi} in planar domains. We describe its regularity for  the checkerboard geometry. 
Further in this section we drop $\alpha,\beta$ from the notation $\Phi_{\alpha,\beta}$ and write 
 $$
\Phi(x,t) = \varphi(t) + a(x) \psi(t), \quad \varphi(t)= t^2\log^{-\beta}(e+t), \quad \psi(t) = t^2 \log^\alpha(e+t).
 $$
 So instead of $W^{1, \Phi_{\alpha,\beta}(\cdot)}(\Omega)$ we write simply $W^{1,\Phi(\cdot)}(\Omega)$, and the same convention  is used for other spaces.

We denote ~$\Omega= (-1,1)^2$ and  use the notation from \cite{BalDieSur20}.

\begin{definition}[Checkerboard setup]
  \label{def:zhikov}
  Let
  \begin{align*}
 x=(x_1,x_2),  \quad x_1, x_2\in\setR.
  \end{align*}
We define $u_2$, $A_2$ and $b_2$ on~$\setR^2$ by
  \begin{align*}
    u_2 &:= \frac 12 \sgn(x_2)\, \theta\bigg( \frac{\abs{x_2}}{\abs{x_1}} \bigg), 
    \\
    A_2 &:= \theta\bigg( \frac{\abs{x_1}}{\abs{x_2}} \bigg)
            \frac{1}{\sigma_{1}} \abs{x_1}^{-1}
              \begin{pmatrix}
                0 & -x_1\\
                x_1 & 0
              \end{pmatrix}, 
    \\
    b_2 &:= \divergence A_2,
  \end{align*}
  where $\sigma_{1}=2$ is the ``surface area'' of the $1$-dimensional
  sphere and ~$\theta \in C^\infty_0((0,\infty))$ is such that
  $\indicator_{(\frac 12,\infty)} \leq \theta \leq \indicator_{(\frac
    14, \infty)}$,  $\norm{\theta'}_\infty \le 6$.
\end{definition}

The matrix divergence is taken rowwise, i.e for matrix~$A=\set{A_{ij}}$ we define~$(\divergence A)_i=\partial_j A_{ij}$. That is,
$$
b_2 =  \nabla^\perp v, \quad v= \frac{1}{2} \sgn(x_1)\, \theta\bigg( \frac{\abs{x_1}}{\abs{x_2}} \bigg) .  
$$

The following properties of functions~$u_2$,~$b_2$,~$A_2$ were proved in~\cite{BalDieSur20}. 

\begin{proposition}
  \label{pro:prop-uAb-d}
  There holds
  \begin{enumerate}
  \item
    $u_2 \in L^\infty(\setR^2) \cap W^{1,1}_{\loc}(\setR^2) \cap C^\infty(\setR^2
    \setminus \set{0})$,
  \item $A_2 \in W^{1,1}_{\loc}(\setR^2) \cap C^\infty(\setR^2 \setminus \set{0})$,
  \item
    $b_2 \in L^1_{\loc}(\setR^2) \cap C^\infty(\setR^2 \setminus \set{0})$.
  \item The following estimates hold
  \begin{align*}
    \begin{alignedat}{2}
      \abs{\nabla u_2} &\lesssim \abs{x_2}^{-1} \indicator_{ \set{ 2
          \abs{x_2} \leq \abs{x_1} \leq 4 \abs{x_2}}} &&\eqsim
      \abs{x_1}^{-1} \indicator_{ \set{ 2 \abs{x_2} \leq
          \abs{x_1} \leq 4 \abs{x_2}}}
      \\
      \abs{b_2} &\lesssim \abs{x_2}^{-1}
      \indicator_{ \set{ 2 \abs{x_1} \leq \abs{x_2} \leq 4
          \abs{x_1}}}&&\eqsim \abs{x_1}^{-1}
      \indicator_{ \set{ 2 \abs{x_1} \leq \abs{x_2} \leq 4
          \abs{x_1}}} .
     \end{alignedat}
  \end{align*}
  \item  $\abs{\nabla u_2}\cdot \abs{b_2} = 0$.
 \vskip 12pt
 
 \item   $
    \int_{\partial \Omega} (b_2 \cdot \nu) u_2\,dS = 1.$
\end{enumerate}

  \end{proposition}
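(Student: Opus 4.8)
The plan is to exploit the shared structure of the three objects. With $J:=\left(\begin{smallmatrix}0&-1\\1&0\end{smallmatrix}\right)$ and $v=\frac12\sgn(x_1)\,\theta(|x_1|/|x_2|)$ as in the statement one checks directly that $A_2=v\,J$, and since the matrix divergence is taken rowwise, $b_2=\divergence A_2=(-\partial_2 v,\partial_1 v)=\nabla^\perp v$. Each of $u_2$, $A_2$, $v$ is positively $0$-homogeneous and is built from the cut-off $\theta$, which is identically $0$ near $t=0$ and identically $1$ near $t=+\infty$. I would first establish $C^\infty$ smoothness on $\setR^2\setminus\{0\}$: away from both coordinate axes the functions $\sgn(x_i)$ and $\abs{x_i}$ are smooth, so $u_2,A_2,v$ are smooth there; near a point of the $x_2$-axis with $x_2\neq0$ one has $\abs{x_2}/\abs{x_1}>\frac12$ and $\abs{x_1}/\abs{x_2}<\frac14$ throughout a neighbourhood, hence $u_2\equiv\frac12\sgn(x_2)$ is locally constant and $A_2\equiv v\equiv0$ there, and symmetrically on the $x_1$-axis; thus $b_2=\nabla^\perp v$ is smooth off $0$ as well. \emph{This is the one step needing genuine care:} the raw formulas contain the singular-looking factors $\abs{x_1}^{-1}$ and $\abs{x_2}^{-1}$, and the whole point is that $\theta$ is locally constant precisely on the regions where those factors would blow up, so they are never activated.

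Next, for the integrability statements (1)--(3): being $0$-homogeneous and smooth on the unit circle, $u_2$, $A_2$, $v$ are bounded on $\setR^2$ (in particular $u_2\in L^\infty(\setR^2)$ and $A_2\in L^\infty$), and their classical gradients are $(-1)$-homogeneous, so $\abs{\nabla u_2},\abs{\nabla A_2},\abs{\nabla v}\lesssim\abs{x}^{-1}$, which is locally integrable on $\setR^2$ and bounded away from the origin. To check that the distributional gradient coincides with the classical one (i.e. that no singular part is concentrated at $0$), I would multiply by $\eta_\epsilon(x):=\eta(x/\epsilon)$ where $\eta\in C^\infty(\setR^2)$ vanishes on $B_1(0)$ and equals $1$ off $B_2(0)$: then $\eta_\epsilon u_2\to u_2$ and $\eta_\epsilon\nabla u_2\to\nabla u_2$ in $L^1_{\loc}$ by dominated convergence, while $\norm{u_2\nabla\eta_\epsilon}_{L^1}\lesssim\norm{u_2}_\infty\,\epsilon^{-1}\,\abs{B_{2\epsilon}(0)}\lesssim\epsilon\to0$, and the same for $A_2$. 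This gives $u_2\in L^\infty\cap W^{1,1}_{\loc}\cap C^\infty(\setR^2\setminus\{0\})$ and $A_2\in W^{1,1}_{\loc}\cap C^\infty(\setR^2\setminus\{0\})$. Then $b_2=\divergence A_2$ is an $L^1_{\loc}$ combination of components of $\nabla A_2$, hence $b_2\in L^1_{\loc}(\setR^2)$, and off the origin $b_2=\nabla^\perp v$ is smooth.

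For the pointwise estimates (4) and the orthogonality (5), I would compute directly: on $\{x_1x_2\neq0\}$ one has $\nabla(\abs{x_2}/\abs{x_1})=\sgn(x_2)\abs{x_1}^{-1}e_2-\abs{x_2}\sgn(x_1)\abs{x_1}^{-2}e_1$, so $\nabla u_2=\frac12\sgn(x_2)\,\theta'(\abs{x_2}/\abs{x_1})\,\nabla(\abs{x_2}/\abs{x_1})$. Since $\theta'$ is supported in $[\frac14,\frac12]$, this forces $2\abs{x_2}\le\abs{x_1}\le4\abs{x_2}$, and on that set $\abs{x_1}\eqsim\abs{x_2}$, $\abs{\nabla(\abs{x_2}/\abs{x_1})}\lesssim\abs{x_1}^{-1}$ and $\norm{\theta'}_\infty\le6$, giving the asserted bound for $\abs{\nabla u_2}$; the identical computation for $v$ gives the bound for $\abs{b_2}=\abs{\nabla v}$ with the roles of $x_1$ and $x_2$ exchanged. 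Finally the supports $\{2\abs{x_2}\le\abs{x_1}\le4\abs{x_2}\}$ and $\{2\abs{x_1}\le\abs{x_2}\le4\abs{x_1}\}$ meet only in $\{0\}$ — the two inequalities force $4\abs{x_1}\le\abs{x_1}$ — so $\abs{\nabla u_2}\cdot\abs{b_2}=0$.

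Finally, for the boundary pairing (6): since $0$ is interior to $\Omega=(-1,1)^2$, $u_2$ and $v$ are smooth near $\partial\Omega$. On the vertical edges $x_1=\pm1$ we have $\abs{x_2}\le1$, so the argument $\abs{x_1}/\abs{x_2}=1/\abs{x_2}\ge1$ lies outside the support of $\theta'$; hence $\partial_2 v=0$ and $b_2\cdot\nu=\pm\partial_2 v=0$ there, so these edges contribute nothing. On the top edge $x_2=1$ one has $\nu=e_2$, $b_2\cdot\nu=\partial_1 v=\frac12\theta'(\abs{x_1})$ and $u_2=\frac12\theta(1/\abs{x_1})=\frac12$, so the contribution is $\int_{-1}^1\frac14\theta'(\abs{x_1})\,dx_1=\frac12\big(\theta(1)-\theta(0)\big)=\frac12$; on the bottom edge $x_2=-1$ both $b_2\cdot\nu$ and $u_2$ change sign, so it again contributes $\frac12$. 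Adding up, $\int_{\partial\Omega}(b_2\cdot\nu)u_2\,dS=1$.
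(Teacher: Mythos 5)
Your proof is correct. The paper itself does not reprove Proposition~\ref{pro:prop-uAb-d} but cites \cite{BalDieSur20} for it, so there is no in-text argument to compare with; your write-up supplies a clean, self-contained derivation. The key simplification $A_2=v\,J$ with $v=\frac12\sgn(x_1)\theta(|x_1|/|x_2|)$ and $J=\left(\begin{smallmatrix}0&-1\\1&0\end{smallmatrix}\right)$ is exactly right and makes the identity $b_2=\nabla^\perp v$, the smoothness off the origin, the support computation for item~(5), and the boundary integral in item~(6) all transparent. The $0$-homogeneity argument plus the annulus cutoff $\eta_\epsilon$ correctly rules out a singular part at the origin and gives the $W^{1,1}_{\loc}$ (resp.\ $L^1_{\loc}$) membership, and the pointwise bounds in (4) follow as you computed, using $|x_1|\eqsim|x_2|$ on the support of $\theta'(|x_2|/|x_1|)$. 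Nothing is missing.
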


We denote~
\begin{align*}
C_+&=\set{x: \abs{x_1}<x_2 }\cap \Omega,\\
C_-&=\set{x:\abs{x_1}< -x_2}\cap \Omega .
\end{align*}
 The weight~$a(x)$ as defined as
 \begin{align*}
  a(x)=\begin{cases}
        1,\text{ if } \abs{x_1}<\abs{x_2}\\
        0,\text{ if }\abs{x_1}\ge\abs{x_2}.
       \end{cases}
\end{align*}
 
 The Figure 1  shows the function~$u$, the $(2,1)$-component of~$A$ and a
possible weight~$a(x)$. It jumps from~$0$ to $1$ and in the filled regions is smooth transition.  
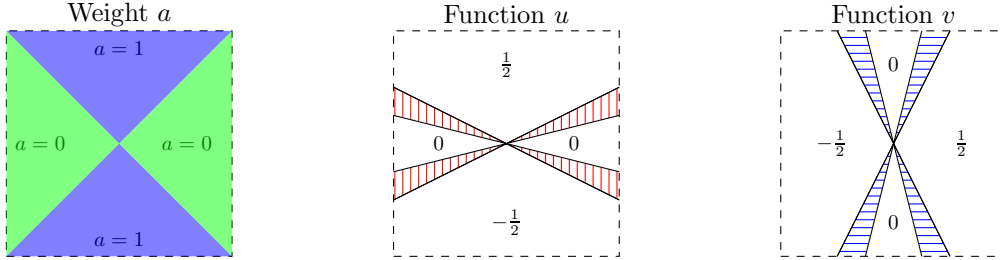
\begin{figure}[!ht]
\label{fig:revised-checkerboard}
    \begin{minipage}[t]{0.31\textwidth}
      \begin{tikzpicture}[scale=1.5]
        \node at (0,1.15) {Weight~$a$};

        \draw[dashed] (-1,-1) -- (-1,+1) -- (+1,+1) -- (+1,-1) --cycle;
        \node at (0,.85) {\scalebox{0.8}{$a=1$}};
        \node at (0,-.85) {\scalebox{0.8}{$a=1$}};
        \node at (.6,0) {\scalebox{0.8}{$a=0$}};
        \node at (-.7,0) {\scalebox{0.8}{$a=0$}};

        \fill[blue, opacity=\opac] (0,0) -- (1,1)--(-1,1)-- cycle;

        \fill[blue, opacity=\opac] (0,0) -- (1,-1)--(-1,-1)-- cycle;

        \fill[green, opacity=\opac] (0,0) -- (1,1)--(1,-1)-- cycle;
        
        \fill[green, opacity=\opac] (0,0) -- (-1,1)--(-1,-1)-- cycle;
      \end{tikzpicture}
    \end{minipage}
    \hfill
    \begin{minipage}[t]{0.31\textwidth}
      \begin{tikzpicture}[scale=1.5]
        \node at (0,1.15) {Function~$u$};
        \draw[dashed] (-1,-1) -- (-1,+1) -- (+1,+1) -- (+1,-1) --cycle;
        \node at (0,.7) {\scalebox{0.8}{$\frac 12$}};
        \node at (0,-.7) {\scalebox{0.8}{$-\frac 12$}};
        \draw (-1,-1/2) -- (1,1/2);
        \draw (-1,1/2)-- (1,-1/2);
        \node at (+.6,0) {\scalebox{0.8}{$0$}};
        \node at (-.6,0) {\scalebox{0.8}{$0$}};
        \filldraw[pattern=vertical lines, pattern color=red] (-1,-1/2) -- (0,0) -- (-1,-1/4);
        \filldraw[pattern=vertical lines, pattern color=red] (-1,+1/2) -- (0,0) -- (-1,+1/4);
        \filldraw[pattern=vertical lines, pattern color=red] (+1,-1/2) -- (0,0) -- (+1,-1/4);
        \filldraw[pattern=vertical lines, pattern color=red] (+1,+1/2) -- (0,0) -- (+1,+1/4);
      \end{tikzpicture}
    \end{minipage}
    \hfill
    \begin{minipage}[t]{0.31\textwidth}
      \begin{tikzpicture}[scale=1.5]
        \node at (0,1.15) {Function~$v$};
        \draw[dashed] (-1,-1) -- (-1,+1) -- (+1,+1) -- (+1,-1) --cycle;
        
        \node at (.6,0) {\scalebox{0.8}{$\frac 12$}};
        \node at (-.55,0) {\scalebox{0.8}{$-\frac 12$}}; 
        \draw (-1/2,-1)--(+1/2,1);
        
        \draw(+1/2,-1)-- (-1/2,+1) ;
        \node at (0,.7) {\scalebox{0.8}{$0$}};
        \node at (0,-.7) {\scalebox{0.8}{$0$}};
        
        \filldraw[pattern=horizontal lines, pattern color=blue] (-1/2,-1) -- (0,0) -- (-1/4,-1);
        \filldraw[pattern=horizontal lines, pattern color=blue] (+1/2,-1) -- (0,0) -- (+1/4,-1);
        \filldraw[pattern=horizontal lines, pattern color=blue] (-1/2,+1) -- (0,0) -- (-1/4,+1);
        \filldraw[pattern=horizontal lines, pattern color=blue]
        (+1/2,+1) -- (0,0) -- (+1/4,+1);
      \end{tikzpicture} 
\end{minipage}%
    \caption{One saddle point}
\end{figure}

 \begin{theorem}
\label{thm:density}
For the integrand~$\Phi=\Phi_{\alpha,\beta}$ the equality $H_0^{1,\Phi(\cdot)}=W_0^{1,\Phi(\cdot)}$   is valid when~$\min(\alpha,\beta)\leq 1$.% $\alpha\le 1$ for any~$\beta$ or $\alpha>1$ and~$\beta\le 1$.  

If ~$\alpha>1$ and~$\beta>1$, then  $H_0^{1,\Phi(\cdot)}\neq W_0^{1,\Phi(\cdot)}$ and there exists $b \in L^{\Phi^*(\cdot)}(\Omega)$ such that there is Lavrentiev gap for $\mathcal{G}(\cdot)$ defined by \eqref{funct_2}:
 $$
 \inf \mathcal{G} (W_0^{1,\Phi(\cdot)}) <  \inf \mathcal{G} (H_0^{1,\Phi(\cdot)}). 
 $$
 Moreover, in this case the codimension of $H_0^{1,\Phi(\cdot)}$ in $W_0^{1,\Phi(\cdot)}$ is one.

 \end{theorem}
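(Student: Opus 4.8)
The plan is to treat the two regimes separately, using the Zhikov-type regularity lemma for the positive part and an explicit saddle-point construction for the gap. For the regime $\min(\alpha,\beta)\le 1$, I would verify the hypothesis of Corollary~\ref{theorem:regularity} with the checkerboard weight $a(x)$ (which is bounded, nonnegative, and has modulus of continuity $\omega(\epsilon)\lesssim 1$ near the interface, up to the smoothing width). The relevant inequality \eqref{ac_z1} reads $\omega(\epsilon)\le k_0\varphi(t)/\psi(t) = k_0\log^{-(\alpha+\beta)}(e+t)$ for $1\le t\le\epsilon^{-d}$. Since $\omega$ is only required to be a modulus of continuity and we are free to choose the smooth transition region, the genuine point is that one needs the transition to be arranged so that $a$ varies by $O(\log^{-(\alpha+\beta)}(1/\epsilon))$ over scale $\epsilon$; with a single saddle point this is a local computation near the diagonal lines $|x_1|=|x_2|$. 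Actually, since here $a$ is a fixed $\{0,1\}$-weight with a fixed smooth transition independent of $\epsilon$, the right statement is subtler: one should instead check Zhikov's Lemma~\ref{lemma:zhikov} directly with $\Phi_\epsilon(x,t)=\varphi(t)+a_\epsilon(x)\psi(t)$, $a_\epsilon(x)=\min_{B_\epsilon(x)\cap\bar\Omega}a$, and observe that $a(x)-a_\epsilon(x)$ is supported in an $\epsilon$-neighbourhood of $\{a=0\}$ where one still has $a(x)\le 1$, so $(a(x)-a_\epsilon(x))\psi(t)\le\psi(t)$, while on the relevant truncation range $t\le M\epsilon^{-d}$ one has $\psi(t)=t^2\log^\alpha(e+t)\lesssim \log^{\alpha+\beta}(1/\epsilon)\,\varphi(t)$; this is bounded only if we can absorb the $\log$ loss, which happens precisely because near $\{a=0\}$ one uses that $\varphi$ alone already dominates — so the $\min(\alpha,\beta)\le1$ condition must enter through a more careful cutoff, splitting $\setR^2$ into the four cones and matching $\log$-powers on each. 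I would follow the computation of \cite{BalDieSur20} adapted to the logarithmic perturbation.

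For the regime $\alpha>1$, $\beta>1$, the plan is the standard dual-pairing obstruction from \cite{BalDieSur20}. Take $u=u_2$ and $b=b_2$ from Definition~\ref{def:zhikov}; by Proposition~\ref{pro:prop-uAb-d} we have $u_2\in W^{1,1}_{\loc}$ with $|\nabla u_2|$ supported where $a=0$, so $\mathcal F(u_2)=\int\varphi(|\nabla u_2|)<\infty$ because $\varphi(t)\lesssim t^2$ near the diagonal and the integrable singularity $|x|^{-1}$ in two dimensions gives $\int_\Omega|\nabla u_2|^2\,dx<\infty$. Hence $u_2\in W^{1,\Phi(\cdot)}_0(\Omega)$ (after noting the boundary values vanish in the appropriate sense — or, if not literally zero, one subtracts a smooth fixed function, which does not affect the gap). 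The pairing $\int_\Omega b_2\cdot\nabla u_2\,dx$ is computed via integration by parts against $A_2$: since $b_2=\divergence A_2$ and $|\nabla u_2|\cdot|b_2|=0$, the naive product vanishes pointwise, yet the distributional pairing equals $\int_{\partial\Omega}(b_2\cdot\nu)u_2\,dS=1$ by item (6) — this discrepancy is exactly the Lavrentiev gap. For smooth $w\in C_0^\infty(\Omega)$ one has $\int_\Omega b_2\cdot\nabla w\,dx=-\int_\Omega(\divergence b_2)w=0$ since $b_2=\nabla^\perp v$ is divergence-free; passing to the $H$-closure, this gives $\int b_2\cdot\nabla w\,dx=0$ for all $w\in H^{1,\Phi(\cdot)}_0$, provided $b_2\in L^{\Phi^*(\cdot)}(\Omega)$ so that the linear functional $w\mapsto\int b_2\cdot\nabla w$ is continuous on $W^{1,\Phi(\cdot)}_0$.

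The technical heart is therefore the claim $b_2\in L^{\Phi^*(\cdot)}(\Omega)$, which is where $\alpha>1$, $\beta>1$ is used. Here $b_2$ is supported where $a=1$, so on its support $\Phi^*(x,s)\approx\varphi^*(s)+(a\psi)^*(s)$, and by \eqref{eq:duality} the dominant term for large $s$ is $(\psi)^*(s)\approx s^2\log^{-\alpha}(e+s)$ (the conjugate of $t^2\log^\alpha$), so $b_2\in L^{\Phi^*(\cdot)}$ requires $\int_\Omega|b_2|^2\log^{-\alpha}(e+|b_2|)\,dx<\infty$. Since $|b_2|\eqsim|x|^{-1}$ on a cone near $\{|x_1|=|x_2|\}$, in polar coordinates this integral behaves like $\int_0 r^{-1}\log^{-\alpha}(1/r)\,dr$, which converges exactly when $\alpha>1$. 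Symmetrically, to see that $u_2$ fails to be approximable — i.e. $u_2\notin H^{1,\Phi(\cdot)}_0$ — one uses the pairing $1=\int b_2\cdot\nabla u_2\ne 0=\int b_2\cdot\nabla w$ for all $w\in H^{1,\Phi(\cdot)}_0$; and the condition $\beta>1$ is what guarantees $\nabla u_2\in L^{\varphi}$ is \emph{not} improvable, equivalently that truncations/mollifications of $u_2$ cannot have uniformly bounded modular — one checks $\int|\nabla u_2|^2\log^{-\beta}$ type bounds on annuli force $\beta>1$ for membership but the full gap uses that the "missing direction" is one-dimensional. For the codimension-one claim, I would argue that $W^{1,\Phi(\cdot)}_0 = H^{1,\Phi(\cdot)}_0 \oplus \setR\, u_2$: the functional $\ell(w):=\int b_2\cdot\nabla w$ is bounded on $W^{1,\Phi(\cdot)}_0$, vanishes on $H^{1,\Phi(\cdot)}_0$, and is nonzero ($\ell(u_2)=1$), so the codimension is at least one; equality follows from the $\min(\alpha,\beta)\le 1$ analysis applied to the weight restricted away from the single saddle, showing any $w$ with $\ell(w)=0$ lies in $H^{1,\Phi(\cdot)}_0$. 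The main obstacle I anticipate is making the "at most one" half of the codimension statement rigorous: one must show that the single saddle point is the only obstruction, which requires a covering/localization argument reducing a general $W^{1,\Phi(\cdot)}_0$ function near the origin to the model $u_2$, plus the regularity (density) result away from the origin — essentially a partition-of-unity argument combined with the first regime's estimate.

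Finally, for the last assertion about $\mathcal G$, once $b_2\in L^{\Phi^*(\cdot)}$ is established, the functional $\mathcal G(w)=\mathcal F(w)+\int_\Omega b_2\cdot\nabla w\,dx$ is well-defined and coercive on $W^{1,\Phi(\cdot)}_0$; minimizing over $H^{1,\Phi(\cdot)}_0$ the linear term drops out, so $\inf\mathcal G(H^{1,\Phi(\cdot)}_0)=\inf\mathcal F(H^{1,\Phi(\cdot)}_0)\ge 0$, whereas testing with $t\,u_2$ gives $\mathcal G(tu_2)=\mathcal F(tu_2)+t\le C t^2\log^{-\beta}(e+1/t)+\cdots + t$ — wait, more carefully $\mathcal F(tu_2)$ grows like $t^2$ times a logarithmic factor for small... rather one optimizes in $t$: $\mathcal F$ is superlinear so $\mathcal F(tu_2)=o(t)$ as $t\to 0^+$ only if $\mathcal F$ were sublinear, which it is not; instead one uses that $\mathcal F$ has at most quadratic-type growth near zero argument, $\mathcal F(tu_2)\lesssim t^2$ (absorbing logs), so $\mathcal G(tu_2)\le Ct^2 - |t|$ for the appropriate sign of $t$, hence $\inf\mathcal G(W^{1,\Phi(\cdot)}_0)\le \inf_{t}(Ct^2-|t|)<0\le\inf\mathcal G(H^{1,\Phi(\cdot)}_0)$, which is the desired strict inequality.
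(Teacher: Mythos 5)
Your sketch of the Lavrentiev-gap half ($\alpha>1$, $\beta>1$) is essentially the paper's argument — a separating functional $\ell(w)=\int_\Omega b_2\cdot\nabla w\,dx$ that vanishes on $H_0^{1,\Phi(\cdot)}$ but equals $1$ on $u_2$, plus a scaling estimate to make $\mathcal G(tu_2)<0$. However, it contains a concrete error: you claim $\int_\Omega|\nabla u_2|^2\,dx<\infty$ from the bound $\varphi(t)\lesssim t^2$ and the ``integrable singularity $|x|^{-1}$ in two dimensions.'' This is false: with $|\nabla u_2|\eqsim|x|^{-1}$ on a cone, $\int_\Omega|\nabla u_2|^2\,dx\sim\int_0 r^{-1}\,dr=\infty$. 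The actual reason $u_2\in W^{1,\Phi(\cdot)}$ is the logarithmic gain in $\varphi(t)=t^2\log^{-\beta}(e+t)$: one gets $\int_\Omega\varphi(|\nabla u_2|)\,dx\sim\int_0 r^{-1}\log^{-\beta}(1/r)\,dr$, which converges precisely when $\beta>1$ — completely parallel to your (correct) computation for $b_2\in L^{\Phi^*(\cdot)}$ and $\alpha>1$. As it stands your argument gives $u_2\in W^{1,\Phi(\cdot)}$ for \emph{all} $\beta$, which would wipe out the threshold.

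The larger gap is the regular half, $\min(\alpha,\beta)\le 1$. You correctly observe that Corollary~\ref{theorem:regularity} (Zhikov's continuity criterion) cannot apply because the checkerboard weight has no useful modulus of continuity, but you then fall back on ``split into four cones, match $\log$-powers, follow \cite{BalDieSur20}'' without identifying a mechanism that closes the argument. The paper proceeds along genuinely different lines which your plan does not touch. First, Lemma~\ref{lemma:post} localizes the obstruction at the origin: away from it the interface is flat, and by decomposing into an even reflection (which sees only $\psi$, hence is mollifiable) plus a function vanishing on the interface (which sees only $\varphi$), one shows any $W^{1,\Phi(\cdot)}$ function vanishing near the origin lies in $H^{1,\Phi(\cdot)}$. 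Second, for $\alpha\le 1$ the singularity at the origin is removable via the iterated-logarithm cutoff $\eta_\epsilon(r)\sim[\log(1/\epsilon)-\log\log(1/r)]/[\log(1/\epsilon)-\log\log(1/\epsilon)]$, whose $\Phi$-modular tends to zero (Lemma~\ref{lemma:remove}). Third, for $\alpha>1$ and $\beta\le1$ a different argument is needed: $\alpha>1$ gives uniform continuity of $W^{1,\Phi(\cdot)}$ functions in the closed cones $\overline{C_\pm}$ (Lemma~\ref{lem:embedding}), so the two limit values $u_\pm$ at the origin exist; the hypothesis $\beta\le1$ forces $u_+=u_-$ via a divergent-series estimate (Lemma~\ref{lem:lim_value_coin}); and once the limits coincide one approximates by $u(0)+(u-u(0))\eta_\epsilon$ (Corollary~\ref{corr:ge1}). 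Without these three steps your plan has no path to the conclusion. Relatedly, your codimension-one claim rests on ``the $\min(\alpha,\beta)\le1$ analysis applied away from the saddle,'' but the paper's actual mechanism is again the limit-value machinery: given any $u\in W_0^{1,\Phi(\cdot)}$ with $\alpha,\beta>1$, the modification $w=u-(u_+-u_-)\eta u_2$ satisfies $w_+=w_-$ and hence lies in $H_0^{1,\Phi(\cdot)}$ by Corollary~\ref{corr:ge1}. That step requires Lemma~\ref{lem:embedding} and Proposition~\ref{prop:coincide}; a generic ``covering/localization argument'' does not substitute for it.
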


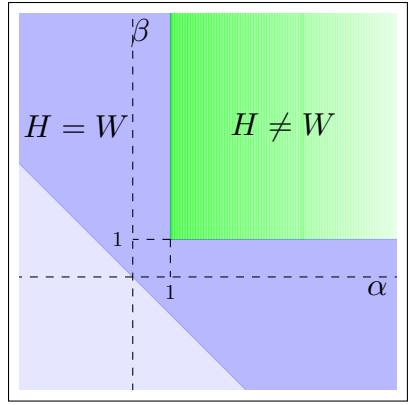
\begin{figure}[!ht]
\label{fig:main}
      \begin{tikzpicture}[framed,scale=0.5]
\clip (-3,-3) rectangle (7,7);
      % \node at (0,1.15) {Weight~$a$};

%         \draw[dashed] (-1,-1) -- (-1,+1) -- (+1,+1) -- (+1,-1) --cycle;
        \draw[left color=green, right color=white, opacity=\opac,draw=none](1,1)--(8,1)--(8,8)--(1,8);
%         %\filldraw[blue, opacity=\opac](0,0)--(0,1)--(-1,1)--(-1,-1)--(1,-1)--(1,0)--(0,0);
%          \filldraw[blue, opacity=\opac](-3,1)--(-1,-1)--(1,-3)--(1,0)--(0,0)--(0,1)--(-3,1);
clop
%\draw[name path=A] plot (\x,-\x);
\draw[name path=B, blue, opacity=0.2] (9,-7)--(9,1)--(1,1)--(1,8)--(-6,8);
\draw[name path=A, blue, opacity=0.2] (-7,8)--(-8,8)--(8,-8)--(9,-8);
\draw[name path=C, blue, opacity=0.2] (-9,7)--(-9,-7)--(9,-7);
\tikzfillbetween[of=B and A]{blue, opacity=0.2};
\tikzfillbetween[of=B and C]{blue, opacity=0.1};

\draw[dashed](1,1)--(0,1);
\draw[dashed](1,1)--(1,0);
\node at (1,-0.4) {\scalebox{0.8}{$1$}}; 
\node at (-0.4,1) {\scalebox{0.8}{$1$}}; 
\draw[->,dashed] (0,-6)--(0,8);
       \draw[->,dashed] (-4,0)--(9,0);
    \node at (0.2,6.5) {\scalebox{1.2}{$\beta$}};
      \node at (6.5,-0.3) {\scalebox{1.2}{$\alpha$}}; 
      \node at (4,4) {\scalebox{1.2}{$H\neq W$}}; 
      \node at (-1.5,4) {\scalebox{1.2}{$H= W$}}; 
      \end{tikzpicture}
 
 \caption{Illustration for Theorem~\ref{thm:density}}
 \end{figure}

The proof of this theorem  is split into several lemmata.

\begin{lemma}
\label{lem:Lavr}
If $\alpha>1$ and $\beta>1$ then $u_2 \in W^{1,\Phi(\cdot)}(\Omega)$ and $b_2 \in L^{\Phi^* (\cdot)}(\Omega)$.
\end{lemma}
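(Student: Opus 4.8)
The plan is to verify the two membership statements by explicit integration, using the pointwise estimates for $\nabla u_2$ and $b_2$ from Proposition~\ref{pro:prop-uAb-d}(4) together with the relation $\abs{\nabla u_2}\cdot\abs{b_2}=0$, which tells us that on the support of $\nabla u_2$ we have $a(x)=0$ (since there $\abs{x_1}\ge 2\abs{x_2}$, so we are in the region $a=0$), while on the support of $b_2$ we have $a(x)=1$ (there $\abs{x_2}\ge 2\abs{x_1}$). Consequently, for the first claim it suffices to show $\int_\Omega \varphi(\abs{\nabla u_2})\,dx<\infty$, i.e. that $\int_\Omega \abs{\nabla u_2}^2\log^{-\beta}(e+\abs{\nabla u_2})\,dx<\infty$; for the second claim, using the duality formula \eqref{eq:duality} with $p=2$, $\gamma=\alpha$, namely $\psi^*(s)\approx s^2\log^{-\alpha}(e+s)$, and noting $a^{1-p'}=a^{-1}=1$ on $\operatorname{supp} b_2$, it suffices to show $\int_\Omega \abs{b_2}^2\log^{-\alpha}(e+\abs{b_2})\,dx<\infty$.

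Both integrals are then reduced to the same one-parameter computation. Concentrating on $\nabla u_2$: the support is $\set{2\abs{x_2}\le\abs{x_1}\le 4\abs{x_2}}\cap\Omega$, where $\abs{\nabla u_2}\eqsim \abs{x_1}^{-1}$. Passing to polar-type coordinates (or slicing in $x_1$ and integrating the $O(\abs{x_1})$-length $x_2$-slice), one finds
\begin{align*}
\int_\Omega \varphi(\abs{\nabla u_2})\,dx \lesssim \int_0^{\sqrt 2} \frac{1}{r^2}\log^{-\beta}\!\Big(e+\frac1r\Big)\, r\,dr = \int_0^{\sqrt 2}\frac{1}{r}\log^{-\beta}\!\Big(e+\frac1r\Big)\,dr,
\end{align*}
and the substitution $s=\log(e+1/r)$ turns this into $\int^\infty s^{-\beta}\,ds$, which converges precisely because $\beta>1$. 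The computation for $b_2$ is identical with $\beta$ replaced by $\alpha$ and uses $\alpha>1$; here one should first check that $\log(e+\abs{b_2})\eqsim\log(e+\abs{x_1}^{-1})$ on the relevant annular region, which is immediate since $\abs{b_2}\eqsim\abs{x_1}^{-1}$ there and $\log$ is slowly varying.

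A few routine points need attention: one must confirm $u_2\in W^{1,1}(\Omega)$ (already in Proposition~\ref{pro:prop-uAb-d}, and in any case $\abs{\nabla u_2}\in L^{\varphi(\cdot)}\hookrightarrow L^1$ by the lower growth bound) and that $b_2$ being merely $L^1_{\loc}$ away from the origin causes no trouble — the only singularity is at $0$ and that is exactly what the integral above controls. One should also be slightly careful with the definition of the Orlicz norm: finiteness of $\int_\Omega\Phi(x,\abs{\nabla u_2})\,dx$ gives $\abs{\nabla u_2}\in L^{\Phi(\cdot)}(\Omega)$ by the $\Delta_2$-condition (scaling the function by a large constant keeps the modular finite, hence the Luxemburg norm is finite), and similarly for $b_2\in L^{\Phi^*(\cdot)}(\Omega)$. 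The main (and essentially only) obstacle is bookkeeping the equivalence $\Phi(x,\abs{\nabla u_2})\eqsim\varphi(\abs{\nabla u_2})$ and $\Phi^*(x,\abs{b_2})\eqsim\psi^*(\abs{b_2})$ on the respective supports via $\abs{\nabla u_2}\cdot\abs{b_2}=0$ and the explicit form of $a$; once that is in place, the convergence is the elementary logarithmic integral above, sharp exactly at $\alpha,\beta>1$.
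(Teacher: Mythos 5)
Your proposal is correct and follows the same route as the paper: split into the disjoint supports of $\nabla u_2$ and $b_2$ where $a=0$ and $a=1$ respectively, reduce via the pointwise bounds $\abs{\nabla u_2},\abs{b_2}\eqsim \abs{x_1}^{-1}$ on the respective cone-shells to the one-dimensional logarithmic integrals, and observe convergence exactly for $\alpha,\beta>1$. You are somewhat more explicit than the paper about the Luxemburg-norm step and the identification $\Phi^*(x,\cdot)\eqsim\psi^*$ on $\{a=1\}$, but there is no substantive difference in the argument.
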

\begin{proof}
Clearly
\begin{align*}
\{\abs{\nabla u_2} \neq 0\} \subset \{a=0\},& \quad  \{|\abs{b_2} \neq 0\} \subset \{a=1\}.
 %\abs{\nabla u_2} \lesssim \abs{x_2}^{-1} \indicator_{ \set{ 2
  %        \abs{x_2} \leq \abs{x_1} \leq 4 \abs{x_2}}} &\eqsim
   %   \abs{x_1}^{-1} \indicator_{ \set{ 2 \abs{x_2} \leq
   %       \abs{x_1} \leq 4 \abs{x_2}}},\\
  %         \abs{b_2} &\eqsim \abs{x_2}^{-1}
  %     \indicator_{ \set{ 2 \abs{x_1} \leq \abs{x_2} \leq 4 \abs{x_1}}} .          
\end{align*}
%see Propositions~\ref{pro:prop-uAb-d}%-\ref{lem:bd-int}.

Then by Proposition~\ref{pro:prop-uAb-d} we get
    \begin{gather*}
   \int_\Omega \Phi^*(x, \abs{b_2})\, dx \lesssim \int_{\Omega \cap \{a=1\}} |b_2|^2 \log^{-\alpha}(e + |b_2|)\, dx \\
   \lesssim  \int_0^{2} \frac{dt}{t \log^\alpha (e+t)}<\infty
    \end{gather*}
provided~$\alpha>1$. And
    \begin{gather*}
   \int_\Omega \Phi(x, \abs{\nabla u_2})\, dx \lesssim \int_{\Omega \cap \{a=0\}} |\nabla u_2|^2 \log^{-\beta}(e + |\nabla u_2|)\, dx \\
\lesssim  \int_0^{2} \frac{dt}{t \log^\beta (e+t)}<\infty,
\end{gather*}
provided~$\beta>1$.
\end{proof}
Let~$D_h=C_+\cap \set{0< x_2< h}$. 
\begin{lemma}
\label{lem:embedding}
If $\alpha>1$ and $ u \in W^{1,\Phi(\cdot)}(\Omega)$ then it is continuous in $\overline{C_\pm}$ with modulus of continuity~$$\omega(t)\leq C(\alpha) \|\nabla u\|_{L^{\psi(\cdot)} (C_\pm)}  \log^{\frac{1-\alpha}{2}}(1/t),\quad t<1/e.$$ Moreover, 
$$\omega(t)   \log^{\frac{\alpha-1}{2}}(1/t) \to 0 \text{ as }t\to 0.$$  %, where the norm of $\nabla u$ is in $L^{\psi(\cdot)} (C_\pm)$.
\end{lemma}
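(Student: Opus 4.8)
The plan is to read the assertion as a two–dimensional Trudinger–Brezis–Wainger type embedding. On $C_\pm$ we have $a\equiv 1$, hence $\Phi(x,t)\ge \psi(t)$ there and $\nabla u\in L^{\psi(\cdot)}(C_\pm)$ with $\psi(t)=t^2\log^\alpha(e+t)$ and $\norm{\nabla u}_{L^{\psi(\cdot)}(C_\pm)}\le\norm{\nabla u}_{L^{\Phi(\cdot)}(\Omega)}$; and $W^{1,L^2\log^\alpha L}$ in dimension two embeds into the continuous functions with a logarithmic modulus of continuity precisely when $\alpha>1$. The geometric point that makes the classical machinery available with uniform constants is that each $C_\pm$ is a triangle, i.e. a bounded convex Lipschitz domain.

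First I would prove a Riesz–potential estimate. Fix $x,y\in C_+$ with $t=\abs{x-y}$ small; since $C_+$ is convex, $[x,y]\subset C_+$. Using the Poincaré inequality on the convex sets $B_\rho(x_0)\cap C_+$ (constant controlled by the diameter, Payne–Weinberger), telescoping over dyadic radii $\rho_j\sim 2^{-j}t$, and the uniform lower volume bound $\abs{B_\rho(x_0)\cap C_+}\gtrsim\rho^2$ (interior cone condition — the only place the corner geometry of $C_\pm$ enters), one obtains for a.e.\ $x_0$ that $\lvert u(x_0)-\langle u\rangle_{B_{2t}(x_0)\cap C_+}\rvert\lesssim J(x_0):=\int_{B_{2t}(x_0)\cap C_+}\abs{\nabla u(z)}\abs{z-x_0}^{-1}\,dz$, and a further Poincaré estimate on $B_{2t}(x)\cap C_+$ (which contains both $B_t(x)\cap C_+$ and $B_t(y)\cap C_+$, each of measure $\gtrsim t^2$) yields $\abs{u(x)-u(y)}\lesssim J(x)+J(y)$.

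Next I would estimate $J$ by the Hölder inequality in generalized Orlicz spaces, $J(x_0)\le 2\norm{\nabla u}_{L^{\psi(\cdot)}(B_{2t}(x_0)\cap C_+)}\,\norm{\,\abs{z-x_0}^{-1}}_{L^{\psi^*(\cdot)}(B_{2t}(x_0))}$. Here $\psi^*(s)\eqsim s^2\log^{-\alpha}(e+s)$ by \eqref{eq:duality}, and computing the Luxemburg norm of $\abs{z-x_0}^{-1}$ in polar coordinates reduces the modular to $\lambda^{-2}\int_0^{s} r^{-1}\log^{-\alpha}(e+\tfrac1{\lambda r})\,dr$, an integral that converges exactly because $\alpha>1$ and equals $\eqsim\lambda^{-2}\log^{1-\alpha}(1/(\lambda s))$; optimizing in $\lambda$ gives $\norm{\,\abs{z-x_0}^{-1}}_{L^{\psi^*(\cdot)}(B_s(x_0))}\lesssim\log^{(1-\alpha)/2}(1/s)$ for $s<1/e$. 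Feeding this back into Step 1 and bounding the local gradient norm crudely by $\norm{\nabla u}_{L^{\psi(\cdot)}(C_\pm)}$ produces the stated modulus of continuity on the interior of $C_\pm$; since it is a genuine modulus of continuity, $u$ extends continuously to $\overline{C_\pm}$.

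For the final ``moreover'' I would not discard the local norm. Setting $\lambda_0=\norm{\nabla u}_{L^{\psi(\cdot)}(C_\pm)}$, the function $g=\psi(\abs{\nabla u}/\lambda_0)$ lies in $L^1(C_\pm)$, so by absolute continuity of the Lebesgue integral together with the $\nabla_2$–scaling \eqref{eq:phi-Nabla2} one gets $\eta(s):=\sup_{x_0}\norm{\nabla u}_{L^{\psi(\cdot)}(B_s(x_0)\cap C_\pm)}\to 0$ as $s\to 0$; hence $\omega(t)\lesssim\eta(2t)\log^{(1-\alpha)/2}(1/t)$, which gives $\omega(t)\log^{(\alpha-1)/2}(1/t)\lesssim\eta(2t)\to 0$. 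The main obstacle is the Orlicz–norm computation in Step 2 — pinning down the exponent $(1-\alpha)/2$ of the logarithm and seeing that $\alpha>1$ is precisely the borderline for integrability — together with verifying that the Poincaré/telescoping argument is genuinely uniform up to the corners of $C_\pm$; the remaining steps are routine.
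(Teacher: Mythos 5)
Your proposal is correct and follows essentially the same route as the paper: a Riesz-potential estimate, H\"older's inequality in generalized Orlicz spaces, an explicit computation of the $L^{\psi^*(\cdot)}$-norm of $\abs{\cdot}^{-1}$ on small balls giving the $\log^{(1-\alpha)/2}$ factor, and absolute continuity of the modular for the ``moreover'' statement. The paper simply cites the potential estimate as well known and works with the sets $D_h = C_+\cap\{0<x_2<h\}$ rather than balls, so your reconstruction is if anything more explicit about the geometric uniformity near the corners and about why $\norm{\nabla u}_{L^{\psi(\cdot)}}$ on small sets tends to zero.
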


\begin{proof}
We start with the well-known estimate of~$u$ in terms of Riesz potential  and H\"older inequality
 \begin{align*}
  \abs{u(x)-\mean{u}_{D_h}}&\lesssim  \int_{D_h}\frac{\abs{\nabla u(y)}}{\abs{x-y}}\, dy \lesssim \norm{\nabla u}_{L^{\psi(\cdot)}(D_h)}\norm{\abs{x-\cdot}^{-1}}_{L^{\psi^*(\cdot)}(D_h)},\quad x\in \bar D_h.
 \end{align*}
 By definition
\begin{align*}
  \norm{\abs{x-\cdot}^{-1}}_{L^{\psi^*(\cdot)}(D_h)}&\leq \norm{r^{-1}}_{L^{\psi^*(\cdot)}(B_{2h}(0))}\\
  &\lesssim \inf \left\{ \lambda>0\,: \, 2\pi\int_{0}^{2h}\psi^*((\lambda r)^{-1}) r\, dr\leq 1 \right\}  \\
  &\lesssim  \inf \left\{ \lambda>0\,: \, \int_{0}^{2h}(\lambda r)^{-2} \log^{-\alpha} (e+(\lambda r)^{-1}) r\, dr\leq c(\alpha) \right\}.
 \end{align*}
 Now 
 \begin{gather*}
 \int_{0}^{2h}(\lambda r)^{-2} \log^{-\alpha} (e+(\lambda r)^{-1}) r\, dr = \lambda^{-2}\int_0^{2\lambda h} t^{-1} \log^{-\alpha} (e+ t^{-1})\, dt \\
 = \lambda^{-2} \int_{(2\lambda h)^{-1}}^\infty s^{-1} \log^{-\alpha} (e+s)\, ds = \lambda^{-2} \frac{1}{\alpha-1}\log^{1-\alpha} (e+(2\lambda h)^{-1}).
 \end{gather*}
 And so
\begin{gather*}
  \norm{\abs{x-\cdot}^{-1}}_{L^{\psi^*(\cdot)}(D_h)}   \leq \left(2h\sup\left\{\lambda>0\, :\, \lambda^{2} \log^{1-\alpha} (e + \lambda) \leq c(\alpha) h^{-2} \right\} \right)^{-1}\\
  \leq  c(\alpha)\log^\frac{1-\alpha}{2} \frac{1}{h}.
\end{gather*} 
Then
 \begin{align*}  
  \abs{u(x)-\mean{u}_{D_h}} \leq c(\alpha) \norm{\nabla u}_{L^{\psi(\cdot)} (D_h)} \log^{\frac{1-\alpha}{2}}\frac{1}{h},\quad x\in \bar D_h.
 \end{align*}
 This proves the required continuity at the origin when it is approached from $C_+$ and for other points of $C_{+}$ the proof is by obvious modification. For $C_{-}$ the reasoning is the same.
\end{proof}

 %We consider the integrand~$\Phi_{\alpha,\beta}$ at each quadrant separately: for the horizontal cone with~$a(x)=0$ the corresponding Orlicz function is~$t^2\log^{-\beta}(e+t)$. For the vertical cone and~$a(x)=1$ it is~$t^2\log^\alpha(e+t)$. So away from the origin we have two autonomous Orlicz functions separated by the hyper-plane~$\Sigma$.

Let $\alpha>1$ and define using Lemma~\ref{lem:embedding} the limit values 
\begin{equation}\label{def_upm}
u_{+} = \lim_{C_{+}\ni x \to 0} u(x), \quad u_{-}=\lim_{C_{-}\ni x \to 0} u(x).
\end{equation}

\begin{lemma}
\label{lem:lim_value_coin}
 Assume that~$\alpha>1$, $\beta \le 1$ and $u\in W^{1,\Phi(\cdot)}(\Omega)$. Then $u_{+}=u_{-}$ .
\end{lemma}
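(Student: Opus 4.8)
We want to show that when $\alpha > 1$ but $\beta \le 1$, the two one-sided limit values $u_+$ and $u_-$ of a function $u \in W^{1,\Phi(\cdot)}(\Omega)$ must coincide at the saddle point. The idea is that Lemma~\ref{lem:embedding} already guarantees $u$ is continuous up to the origin from within each cone $C_\pm$ (where $a=1$), so the only way a jump could persist is through the complementary cones $\{|x_1| \ge |x_2|\}$, where $a = 0$ and the energy only controls $\int \varphi(|\nabla u|) = \int |\nabla u|^2 \log^{-\beta}(e+|\nabla u|)$. The plan is to show that with $\beta \le 1$ this weaker integrability is still strong enough to force continuity of $u$ at the origin across the full neighborhood — i.e. $u$ is (approximately) continuous at $0$ as a point of $\Omega$, not just as a boundary point of each cone — which immediately gives $u_+ = u_-$.

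Concretely, I would fix a small radius $h$ and estimate the oscillation of $u$ on an annulus or on a suitable connected "bridge" region joining $C_+$ to $C_-$ through the zone $\{a=0\}$. On segments of a circle $|x| = r$ the relevant arcs lie in $\{a=0\}$ where only $\varphi$-energy is available; I would use a one-dimensional Sobolev/absolute-continuity argument along circles, controlling $\int_{|x|=r} |\partial_\tau u|\,d\tau$ by the $\varphi$-modular on the annulus via Hölder's inequality in the Orlicz pair $(\varphi, \varphi^*)$. The dual function here is $\varphi^*(t) \approx |t|^2 \log^{\beta}(e+|t|)$ by \eqref{eq:duality}, and the key computation is that $\int_0^{h} \varphi^*(1/r)\, r\, dr$ — or rather the corresponding Luxemburg-norm estimate of $r \mapsto r^{-1}$ — behaves like $\log^{1-\beta}(1/h)$ up to constants (or like $\log\log$ when $\beta = 1$), so it stays bounded as a norm only after normalization but in any case yields oscillation bounds that tend to $0$. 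This is exactly the borderline computation mirroring the one in Lemma~\ref{lem:embedding}, but now with the sign of the exponent flipped because we are on the $\{a=0\}$ side; the point is that $\beta \le 1$ is precisely the threshold making $\varphi$ "just barely" outside $L^2$ in the right way so that the potential-type estimate still closes.

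The main obstacle, and the step requiring care, is the geometry near the origin: a single circle $|x|=r$ need not have its $\{a=0\}$-arc connecting a point of $C_+$ to a point of $C_-$ directly, so I would instead combine the within-cone continuity from Lemma~\ref{lem:embedding} (which pins down $u$ near $0$ on the diagonal rays bounding the cones, up to the small modulus $\omega$) with an oscillation estimate along a path that stays in $\{a=0\}$ and connects a point on the boundary ray $|x_1| = |x_2|$, $x_2 > 0$ to one on $|x_1| = |x_2|$, $x_2 < 0$. Passing to a sequence $r = r_j \to 0$ along which the circular energy $\int_{|x|=r_j} \varphi(|\nabla u|)\,d\tau$ is controlled (a Fubini/Chebyshev selection over $r$, using that the full modular on a small annulus is small) gives $|u(x) - u(x')| \to 0$ for such pairs $x, x'$, and feeding in the one-sided limits from \eqref{def_upm} yields $|u_+ - u_-| \le 2\omega(r_j) + o(1) \to 0$. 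I expect the bookkeeping of which arcs lie in which region, and the careful choice of the radii $r_j$, to be the only genuinely delicate part; the Orlicz–Hölder estimate itself is a routine variant of the one already carried out.
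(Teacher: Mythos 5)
Your proposal has a real gap at the central computational step, and it is worth contrasting with the paper's (different) argument, which is a proof by contradiction rather than a direct oscillation estimate.

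The problematic step is the claim that the Luxemburg norm of $r\mapsto r^{-1}$ in $L^{\varphi^*(\cdot)}(B_h)$ behaves like $\log^{1-\beta}(1/h)$. With $\varphi^*(t)\approx t^2\log^\beta(e+t)$ one has $\varphi^*(1/r)\,r\approx r^{-1}\log^\beta(1/r)$, and with $s=\log(1/r)$ the integral $\int_0^h \varphi^*(1/r)\,r\,dr$ becomes $\int_{\log(1/h)}^{\infty} s^\beta\,ds$, which is divergent for every $\beta>-1$ (including the whole range $-1<\beta\le 1$ of interest). So $\norm{r^{-1}}_{L^{\varphi^*(\cdot)}(B_h)}=\infty$, and the Riesz-potential route of Lemma~\ref{lem:embedding} simply does not transfer to the $\{a=0\}$ cones. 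Indeed, $W^{1,\varphi(\cdot)}$ functions are generically discontinuous there; the paper only uses this potential estimate inside $C_\pm$ where $a=1$ and the exponent has the favorable sign. You seem to have flipped the range of the substituted integral, which reverses the convergence verdict at exactly the place that matters.

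The paper's proof avoids this by arguing by contradiction. Assume $\abs{u_+-u_-}=1$. Because $u$ is continuous up to the vertex in $\overline{C_\pm}$ (Lemma~\ref{lem:embedding}), for each small $s$ the vertical segment $x_1=s$, $\abs{x_2}\le s$, carries total variation at least $1$: $\int \abs{\nabla u}\,dx_2\ge 1$. Integrating $s$ over $(h/2,h)$ gives $\int_{\Omega_h}\abs{\nabla u}\,dx\gtrsim h$ on the small triangle $\Omega_h$. Then the Orlicz--H\"older inequality is applied with the \emph{constant function} $1$ as the test object, not $r^{-1}$: on $\Omega_h$ (which has measure $\sim h^2$) one computes $\norm{1}_{L^{\varphi^*(\cdot)}(\Omega_h)}\lesssim h\log^{\beta/2}(1/h)$, a perfectly finite quantity. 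This forces $\norm{\nabla u}_{L^{\varphi(\cdot)}(\Omega_h)}\gtrsim\log^{-\beta/2}(1/h)$, hence a lower bound $\int_{\Omega_h}\varphi(\abs{\nabla u})\,dx\gtrsim\log^{-\beta}(1/h)$ on each dyadic scale $h=2^{-j}$; summing over $j$ yields $\sum j^{-\beta}=\infty$ precisely when $\beta\le1$, contradicting $\nabla u\in L^{\varphi(\cdot)}$. This is both shorter and structurally more robust than a direct modulus-of-continuity estimate: no quantitative selection of good radii is needed.

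Your secondary idea --- a one-dimensional Fubini/Chebyshev selection of circles or segments --- can in fact be pushed through, but only with the sharpened selection $\liminf_{s\to0} s\,f(s)\,\log^\beta(1/s)=0$ (where $f(s)$ is the $\varphi$-modular of $\partial_{x_2}u$ on the segment at $x_1=s$), whose validity under $\int_0 f(s)\,ds<\infty$ is again exactly the dichotomy $\int_0 \frac{ds}{s\log^\beta(1/s)}=\infty$ for $\beta\le1$. You do not identify this quantitative refinement, and without it the naive Chebyshev bound $s f(s)\to0$ along a subsequence leaves an uncontrolled factor $\log^{\beta/2}(1/s_j)$ in the oscillation estimate. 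Either repair the selection lemma explicitly, or switch to the paper's contradiction scheme, which packages the same divergence as a clean modular lower bound.
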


\begin{proof}
 Indeed, assume that~$u_+\neq u_{-}$.  We assume without loss that $|u_+ - u_{-}|=1$. Then for any~$s\in (0,1/4)$ we have
\begin{align*}
\int_{-h}^h \abs{\nabla u(s,x_2)}\, dx_2\ge 1
\end{align*}
and upon integration over $s\in (h/2,h)$, $h\leq 1/4$, this yields
$$
\int_{\Omega_h} \abs{\nabla u}\, dx\ge \frac h 2,
$$
where~$\Omega_h=\set{(x_1,x_2):\frac h 2\le x_1\le h,\quad \abs{x_2}<x_1}$.  Now
$$
 \frac h 2 \lesssim \norm{\nabla u}_{L^{\Phi(\cdot)}(\Omega_h)}\norm{1}_{L^{\Phi^*(\cdot)}(\Omega_h)}
$$
and 
\begin{gather*}
\norm{1}_{L^{\Phi^*(\cdot)}(\Omega_h)} = \norm{1}_{L^{\varphi^*(\cdot)}(\Omega_h)} \\
= \inf \left\{ \lambda>0 \, :\, \int_{\Omega_h} \lambda^{-2} \log^{\beta} (e+\lambda^{-1})\, dx \leq c(\beta) \right\}\\
= \inf \left\{ \lambda>0 \, :\,  \lambda^{-2} \log^{\beta} (e+\lambda^{-1})\, dx \leq c(\beta) h^{-2} \right\} \\
\lesssim h \log^{\beta/2} \frac{1}{h}
\end{gather*}
yield
$$
 \norm{\nabla u}_{L^{\Phi(\cdot)}(\Omega_h)}\ge \epsilon\log^{-\frac \beta 2} \frac 1 h  
$$
with some positive constant $\epsilon = \epsilon(\beta)\in (0,1)$. 

By definition of the Luxemburg norm we have
\begin{align*}
 \int_{\Omega_h}\frac{\abs{\nabla u}^2}{\epsilon^2\log^{-\beta}\frac 1 h}\log^{-\beta}\bigg(e+\frac{|\nabla u|}{\epsilon\log^{-\beta/2}\frac 1 h} \bigg)\, dx \ge 1.
\end{align*}
Therefore for $h\le 2^{-j_0}$ for some~$j_0>0$ there holds
 \begin{align*}
 \int_{\Omega_h} \abs{\nabla u}^2\log^{-\beta}\bigg(e+\frac{|\nabla u |}{\epsilon\log^{-\beta}\frac 1 h} \bigg)\, dx &\ge \epsilon^2\log^{-\beta}\frac 1 h,  \\
 \int_{\Omega_h} \abs{\nabla u}^2\log^{-\beta}\bigg(e+\abs{\nabla u}\bigg)\, dx&\ge \epsilon^2\log^{-\beta}\frac 1 h.
 \end{align*}
Summing the last inequality over $h = 2^{-j}$, $j \geq j_0$, we arrive at 
\begin{align*} 
\int_{\Omega} \abs{\nabla u}^2\log^{-\beta}\bigg(e+\abs{\nabla u}\bigg)\, dx\ge \sum_{j=j_0}^\infty \epsilon^2\log^{-\beta} 2^{j} \ge \frac{\epsilon^2}{\log^{\beta}{2}}\sum_{j=j_0}^\infty \frac 1 {j^\beta} =+\infty
\end{align*}
provided that~$\beta \le 1$. This proves~$u_-=u_+$. \end{proof}

%The proof of the next lemma will be postponed till the next section. 
\begin{lemma}\label{lemma:post}
Let $u \in W^{1,\Phi(\cdot)}(\Omega)$ and $u=0$ in a neighbourhood of the origin. Then $u\in H^{1,\Phi(\cdot)}(\Omega)$. If $u\in W_0^{1,\Phi(\cdot)}(\Omega)$ and $u=0$ in a neighbourhood of the origin then $u\in H_0^{1,\Phi(\cdot)}(\Omega)$.
\end{lemma}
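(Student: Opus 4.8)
\textbf{Proof plan for Lemma~\ref{lemma:post}.}
The plan is to establish the first assertion (the statement for $W^{1,\Phi(\cdot)}$) and then deduce the second one (for $W_0^{1,\Phi(\cdot)}$) by combining the same localization argument with a standard cutoff near $\partial\Omega$. First I would fix $r>0$ so small that $u\equiv 0$ on $B_{2r}(0)$ and pick a cutoff $\eta\in C^\infty(\Omega)$ with $\eta\equiv 1$ outside $B_{2r}(0)$ and $\eta\equiv 0$ on $B_r(0)$; since $u=\eta u$ there is nothing to mollify near the origin, and on the remaining region $\Phi(x,t)$ is comparable to a genuinely nice integrand. More precisely, away from the origin the weight $a(x)$ is locally constant (equal to $0$ or $1$ in a neighbourhood of each such point), so $\Phi(\cdot,t)$ is, locally, one of the two fixed Orlicz functions $\varphi(t)$ or $\varphi(t)+\psi(t)$, both of which satisfy $\Delta_2$ and $\nabla_2$ and for which smooth functions are dense in the corresponding Orlicz--Sobolev space by the classical theory (see \cite{DieHHR11,HarHas19}).

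The key steps, in order, are: (i) choose the cutoff and reduce to $u$ supported in $\Omega\setminus B_r(0)$; (ii) take a finite open cover of $\operatorname{supp}u$ by balls $B_i$ each of which is either disjoint from $\{a=1\}$ or disjoint from $\{a=0\}$ or contained in the smooth-transition region where $a$ is Lipschitz; (iii) use a subordinate smooth partition of unity $\{\chi_i\}$ to write $u=\sum_i \chi_i u$, so it suffices to approximate each $\chi_i u$; (iv) on each patch $B_i$, since $\Phi(x,\cdot)$ is, up to the bounded multiplicative factor coming from $a\in[0,1]$ being continuous there, a fixed $\Delta_2\cap\nabla_2$ Orlicz function, mollify $\chi_i u$ at scale $\delta$ and use the $\Delta_2$-condition together with continuity of translations in $L^{\Phi(\cdot)}$ — equivalently, the equiintegrability argument already used after Definition~\ref{def:regular} — to conclude $\mathcal{F}(\chi_i u * \rho_\delta)\to \mathcal{F}(\chi_i u)$ and convergence of gradients in $W^{1,1}$; (v) sum over $i$. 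For the $W_0$ statement, I would additionally multiply by a cutoff $\zeta_\epsilon$ vanishing near $\partial\Omega$: since $u\in W_0^{1,\Phi(\cdot)}(\Omega)$ and $\Phi$ has $\nabla_2$, the truncation error $\mathcal{F}(u-\zeta_\epsilon u)\to 0$, and then the approximants constructed above can be taken in $C_0^\infty(\Omega)$.

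The main obstacle is step (iv) in the transition region where $a$ is merely Lipschitz (not locally constant): there one cannot treat $\Phi$ as a fixed Orlicz function, so one must verify directly that mollification converges in modular. This is exactly where one invokes that on such a patch the modulus of continuity of $a$ dominates $\varphi/\psi$ for the relevant range of $t$ — the condition \eqref{ac_z1} underlying Corollary~\ref{theorem:regularity} — or, more simply, that on a set bounded away from the origin $|\nabla u|$ is controlled in $L^{\Phi(\cdot)}$ and the frozen-coefficient integrand $\Phi_\epsilon$ of Corollary~\ref{theorem:regularity} squeezes $\Phi$ from both sides up to constants and additive $O(1)$ terms, which together with $\Delta_2$ gives the needed equiintegrability of $\Phi(x,|\nabla(\chi_i u*\rho_\delta)|)$. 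Once equiintegrability and a.e.\ convergence are in hand, Vitali's convergence theorem (or Scheff\'e, as in the discussion after Definition~\ref{def:regular}) closes the argument, and the local constant $c$ in $\Delta_2$ being uniform in $x$ ensures the estimates are uniform over the finite cover.
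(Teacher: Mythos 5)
Your proposal contains a genuine gap at the core of step~(ii)--(iv). You reduce to $u$ supported away from the origin and cover $\operatorname{supp}u$ by balls $B_i$, claiming each ball is either disjoint from $\{a=1\}$, disjoint from $\{a=0\}$, or ``contained in the smooth-transition region where $a$ is Lipschitz.'' But the weight used in this section (and in Lemma~\ref{lemma:post}) is the \emph{hard} checkerboard weight,
\[
a(x)=\begin{cases}1,&|x_1|<|x_2|,\\ 0,&|x_1|\ge|x_2|,\end{cases}
\]
which has a genuine jump discontinuity along the four rays $\{|x_1|=|x_2|\}\setminus\{0\}$. There is no Lipschitz transition region, so any ball meeting one of these rays contains a jump of $a$. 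On such a ball, mollification of $\chi_i u$ at scale $\delta$ smears mass from the $\{a=0\}$ side (where one only controls $\nabla u$ in $L^{\varphi}$) onto the $\{a=1\}$ side (where the modular involves the strictly stronger $\psi$), and the resulting modular need not converge. Your fallback arguments do not repair this: condition~\eqref{ac_z1} of Corollary~\ref{theorem:regularity} is a modulus-of-continuity hypothesis on $a$ and is simply false for a jump; and the frozen-coefficient $\Phi_\epsilon$ with $a_\epsilon=\min_{B_\epsilon}a$ equals $\varphi$ in a full neighbourhood of the jump ray, so the two-sided squeeze $c_1\Phi\le\Phi_\epsilon+c_3$ of Zhikov's Lemma~\ref{lemma:zhikov} fails there for the relevant range of $t$, since $\psi/\varphi\to\infty$.

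The paper instead handles the jump by a reflection/even-extension decomposition, which is the missing idea in your plan. After the same localization and a rotation, one reduces to a square $Q$ with $\tilde a=0$ on $Q_+=\{x_2>0\}$ and $\tilde a=1$ on $Q_-=\{x_2<0\}$. For $v\in W^{1,\tilde\Phi(\cdot)}(Q)$ one sets $w$ to be the even reflection of $v|_{Q_-}$ across $\{x_2=0\}$ and $z=v-w$. Then $w\in W^{1,\psi}(Q)$ (since $\psi$ is the stronger phase and $w$ is built from the $a=1$ side), and $\psi$ dominates $\tilde\Phi$ up to constants, so $w$ can be approximated by smooth functions in $W^{1,\psi}(Q)$, hence in $W^{1,\tilde\Phi(\cdot)}(Q)$. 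Meanwhile $z$ vanishes on $Q_-$ and has zero trace on $\{x_2=0\}$; on $Q_+$ the integrand is the $x$-independent $\varphi$, so $z$ can be mollified in $W^{1,\varphi}(Q_+)$ by standard theory, keeping $z_\epsilon$ supported in $\{x_2>\epsilon\}$. Summing $w_\epsilon+z_\epsilon$ gives the approximation. This decomposition is what replaces your ``mollify on each patch'' step; without it (or some equivalent device that never averages across the jump from the weak side), the argument does not close.
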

\begin{proof}
By partition of unity, rotation and dilation the proof is reduced to showing the following fact. Let $\tilde{\Phi}(x,t) =\varphi(t) + \tilde a(x) \psi(t)$, where $\tilde a(x) =0$ when $x_2>0$ and $\tilde a(x) = 1$ when $x_2<0$. Denote $Q = \{(x_1,x_2)\, :\, |x_1| + |x_2|<1 \}$. Then $ W^{1,\tilde{\Phi}(\cdot) }(Q) = H^{1,\tilde{\Phi}(\cdot) }(Q)$ and $ W_0^{1,\tilde{\Phi}(\cdot) }(Q) = H_0^{1,\tilde{\Phi}(\cdot) }(Q)$. 

Let $v\in W^{1,\tilde{\Phi}(\cdot)}(Q)$. Denote $Q_+ = Q \cap \{x_2 >0\}$, $Q_{-} = Q \cap \{x_2<0\}$ and  by~$w$ the even extension of the function~$v$ from~$Q_{-}$ to~$Q_{+}$, that is 
$$
w(x_1,x_2) = \begin{cases}
v(x_1,x_2),\quad  & x_2 <0,\\
v(x_1,-x_2), \quad & x_2 \geq 0.
\end{cases}
$$
Set~$z=v-w$.  The function $z$ has zero trace on $\{x_2=0\}$ and vanishes on $Q_{-}$. In the region $Q_{+}$ obviously $\tilde \Phi(x,t) = \varphi(t)$ is independent of $x$. So there exists (by the standard mollification procedure) a sequence $z_\epsilon \in C^\infty(\overline{Q_{+}})$ such that $z_\epsilon =0$ when $\{x_2<\varepsilon\}$ and   $z_\varepsilon \to z$ in $W^{1,\varphi(\cdot)} (Q_+)$, therefore $z_\varepsilon \to z$ in $W^{1,\Phix}(Q)$. On the other hand, $w \in W^{1,\psi(\cdot)}(Q)$ and thus it can be approximated by $w_\epsilon \in C^\infty(\overline{Q})$ in $W^{1,\psi(\cdot)}(Q)$. Take $u_\epsilon = w_\epsilon + z_\epsilon$. Clearly, it converges to $u$ in $W^{1,\Phix}(Q)$. 

For $v\in W_0^{1,\tilde{\Phi}(\cdot)}(Q)$ the proof is the same, but $z\in W_0^{1,\phi(\cdot)}(Q_{+})$, $w \in W_0^{1,\psi(\cdot)}(Q)$, so we can take approximating sequences $z_\epsilon$ from~$C_0^\infty(Q_+)$ and $w_\epsilon$ from $C_0^\infty(Q)$.
\end{proof}

Thus the difference between $W^{1,\Phi(\cdot)}(\Omega)$ and $H^{1,\Phi(\cdot)}(\Omega)$  is in some sense concentrated at the origin (the saddle point). In the next statement we claim that this possible singularity is always removable provided that $\alpha\leq 1$.

\begin{lemma}\label{lemma:remove}
Let $\alpha \leq 1$. Then there exists a sequence of functions $\eta_\epsilon\in C^\infty (\Omega)$, $\epsilon\to 0$, such that $\eta_\epsilon =1$ outside $B_\epsilon(0)$, $\eta_\epsilon =0$ in a neighbourhood of the origin, $0\leq \eta_\epsilon \leq 1$, and $$\int_\Omega \Phi(x,\abs{\nabla \eta_\epsilon})\, dx \to 0 \text{  as }\epsilon\to 0.$$
\end{lemma}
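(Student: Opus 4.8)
The plan is to construct the cut-off functions explicitly as radial functions depending only on $r = |x|$, interpolating between $0$ near the origin and $1$ outside $B_\epsilon(0)$, and to choose the radial profile so that the logarithmic correction in $\varphi$ and $\psi$ is exploited. Recall that on the support of $a$ we must control $\psi(|\nabla\eta_\epsilon|) = |\nabla\eta_\epsilon|^2\log^\alpha(e+|\nabla\eta_\epsilon|)$, and on $\{a=0\}$ only $\varphi(|\nabla\eta_\epsilon|)\le |\nabla\eta_\epsilon|^2$, so the worst term is $\int_{B_\epsilon} |\nabla\eta_\epsilon|^2\log^\alpha(e+|\nabla\eta_\epsilon|)\,dx$. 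Since in two dimensions $\int_{B_\epsilon\setminus B_\delta} r^{-2}\,dx \eqsim \log(\epsilon/\delta)$, the natural choice is the logarithmic cut-off: fix a small $\delta = \delta(\epsilon)$ and set $\eta_\epsilon(x) = \chi\!\big(\tfrac{\log(|x|/\delta)}{\log(\epsilon/\delta)}\big)$ for $\delta \le |x|\le \epsilon$, with $\eta_\epsilon = 0$ on $B_\delta$, $\eta_\epsilon = 1$ outside $B_\epsilon$, where $\chi\in C^\infty(\setR)$, $\chi = 0$ on $(-\infty,0]$, $\chi = 1$ on $[1,\infty)$, $0\le\chi\le 1$. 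Then $|\nabla\eta_\epsilon| \lesssim \big(|x|\log(\epsilon/\delta)\big)^{-1}$ on the annulus.

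With $L := \log(\epsilon/\delta)$, a direct computation gives
\begin{align*}
\int_\Omega \Phi(x,|\nabla\eta_\epsilon|)\,dx
&\lesssim \int_{B_\epsilon\setminus B_\delta} |\nabla\eta_\epsilon|^2\log^\alpha\!\big(e+|\nabla\eta_\epsilon|\big)\,dx\\
&\lesssim \int_\delta^\epsilon \frac{1}{r^2 L^2}\,\log^\alpha\!\Big(e+\frac{1}{rL}\Big)\,\frac{r\,dr}{1}\\
&\lesssim \frac{1}{L^2}\int_\delta^\epsilon \frac{\log^\alpha(e+r^{-1})}{r}\,dr
 \lesssim \frac{\log^{\alpha+1}(1/\delta)}{L^2},
\end{align*}
where in the last step one uses $\int_\delta^\epsilon r^{-1}\log^\alpha(e+r^{-1})\,dr \eqsim \log^{\alpha+1}(1/\delta) - \log^{\alpha+1}(1/\epsilon) \lesssim \log^{\alpha+1}(1/\delta)$ (for $\alpha > -1$, which holds since $\alpha\le 1$; the case $\alpha\le -1$ is even easier). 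So it remains to choose $\delta$ as a function of $\epsilon$ so that $\log^{\alpha+1}(1/\delta) / \log^2(\epsilon/\delta)\to 0$. Taking $\delta = \exp(-1/\epsilon)$, say, gives $\log(1/\delta) = 1/\epsilon$ and $\log(\epsilon/\delta) = 1/\epsilon + \log\epsilon \eqsim 1/\epsilon$, hence the quotient is $\eqsim \epsilon^{2}\cdot\epsilon^{-(\alpha+1)} = \epsilon^{1-\alpha}\to 0$ precisely because $\alpha \le 1$ — and when $\alpha = 1$ one instead picks $\delta$ with $\log(1/\delta)$ growing slower, e.g. $\delta = \epsilon^{1/\epsilon}$ so that $\log(1/\delta) = \epsilon^{-1}\log(1/\epsilon)$ while $\log(\epsilon/\delta)\eqsim \epsilon^{-1}\log(1/\epsilon)$, making the quotient $\eqsim 1/\log(1/\epsilon)\to 0$. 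Either way one produces a sequence with the required properties.

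The only mild subtlety — and the step I would be most careful about — is the borderline exponent $\alpha = 1$: there the factor $\log^{\alpha+1}(1/\delta) = \log^2(1/\delta)$ has the same power as $\log^2(\epsilon/\delta)$, so the decay is not automatic and the choice of $\delta(\epsilon)$ must be tuned (as above) to gain the extra logarithmic factor; a naive $\delta = \epsilon^2$ would fail. Everything else is routine: $\eta_\epsilon\in C^\infty(\Omega)$ and $0\le\eta_\epsilon\le1$ are immediate from the definition via $\chi$, the bound $\int_\Omega\Phi(x,|\nabla\eta_\epsilon|)\,dx\to0$ follows from the displayed estimate, and one should note in passing that $\Phi(x,|\nabla\eta_\epsilon|)\le c\,|\nabla\eta_\epsilon|^2\log^\alpha(e+|\nabla\eta_\epsilon|)$ uniformly because $a$ is bounded and $\log^{-\beta}\le\log^\alpha$ is irrelevant here (the $\varphi$-part is dominated by the $\psi$-part up to a constant for large argument, and for bounded argument both are bounded). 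This completes the construction.
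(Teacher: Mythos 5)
The construction you propose---a cut-off that is affine in $\log(1/r)$ between two radii $\delta$ and $\epsilon$---works for $\alpha<1$, but it genuinely fails at the borderline $\alpha=1$, and the fix you sketch for that case does not hold up. With $a=\log(1/\delta)$, $b=\log(1/\epsilon)$ and $L=a-b$, your own estimate (using that the integral $\int_\delta^\epsilon r^{-1}\log^\alpha(e+r^{-1})\,dr$ is comparable to $\tfrac{1}{\alpha+1}\bigl(\log^{\alpha+1}(1/\delta)-\log^{\alpha+1}(1/\epsilon)\bigr)$, not merely bounded by $\log^{\alpha+1}(1/\delta)$) gives, at $\alpha=1$,
\begin{align*}
\frac{1}{L^2}\cdot\frac{a^2-b^2}{2}\;=\;\frac{a+b}{2(a-b)}\;\ge\;\frac{1}{2}
\end{align*}
for \emph{every} choice of $\delta<\epsilon$. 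So no tuning of $\delta(\epsilon)$ can make the modular go to zero with the single-logarithmic profile. Concretely, your suggested $\delta=\epsilon^{1/\epsilon}$ gives $a=\epsilon^{-1}\log(1/\epsilon)$, $L=a-b=(\epsilon^{-1}-1)\log(1/\epsilon)$, and the quotient $\log^2(1/\delta)/L^2 = (1-\epsilon)^{-2}\to 1$, not $\to 1/\log(1/\epsilon)$ as you wrote. (A more careful accounting of $\log^\alpha(e+1/(rL))$ instead of $\log^\alpha(e+1/r)$ does not rescue this either: the same substitution $s=rL$ shows the quantity still has infimum $1/2$ over all $\delta$.)

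The paper uses a \emph{double}-logarithmic profile: $\eta_\epsilon(r)$ is affine in $\log\log(1/r)$ between $r=e^{-1/\epsilon}$ and $r=\epsilon$, so that
\begin{align*}
\abs{\nabla\eta_\epsilon}\;\eqsim\;\frac{1}{D\,r\,\log(1/r)},\qquad D:=\log(1/\epsilon)-\log\log(1/\epsilon).
\end{align*}
The extra $\log(1/r)$ in the denominator is exactly what neutralizes the $\log(e+\abs{\nabla\eta_\epsilon})\lesssim\log(1/r)$ factor: one gets
\begin{align*}
\int_\Omega\abs{\nabla\eta_\epsilon}^2\log(e+\abs{\nabla\eta_\epsilon})\,dx
\;\lesssim\;\frac{1}{D^2}\int_{e^{-1/\epsilon}}^{\epsilon}\frac{dr}{r\log(1/r)}
\;=\;\frac{1}{D}\;\lesssim\;\frac{1}{\log(1/\epsilon)}\;\to\;0,
\end{align*}
which covers the worst case $\alpha=1$ and hence all $\alpha\le1$. (This is no accident: as the paper's general Lemma~\ref{lemma:remove1} makes explicit, the right radial profile is the $\psi$-harmonic one, with $\eta'(r)\eqsim(\psi^*)'(c/r)$; for $\psi(t)=t^2\log(e+t)$ this yields $\eta'(r)\eqsim (r\log(1/r))^{-1}$, i.e.\ the double-log.) So the idea of a radial cut-off with a carefully chosen profile is the right one, and your treatment of $\alpha<1$ is fine, but at $\alpha=1$ you need the double-log profile; the single-log one cannot work.
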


\begin{proof}
Take $0<\epsilon<1/10$ and set 
\begin{align*}
\eta_\epsilon(r)=  \begin{cases}1, \quad &r \geq \epsilon,\\ 
 \frac{\log (1/\epsilon) - \log \log (1/r)}{ \log (1/\epsilon) - \log \log (1/\epsilon)} ,\quad& e^{-1/\epsilon}<r<\epsilon,\\
0, \quad &r\leq e^{-1/\epsilon}.
\end{cases}
\end{align*}
Clearly it is sufficient to show that 
$$
I_\epsilon=\int_\Omega |\nabla \eta_\epsilon|^{2} \log (e + |\nabla \eta_\epsilon|)\, dx \to 0 \quad \text{as} \quad \epsilon \to 0.
$$
We evaluate
\begin{gather*}
I_\epsilon\lesssim \int\limits_{e^{-1/\epsilon}}^{\epsilon}\frac{1}{r^2 \log^2 (1/r) \log^2 (1/\epsilon) } \log \left(\frac{1}{r \log (1/r)  \log (1/\epsilon)} \right)\, r  dr  \\
\leq  \frac{1}{\log^2 (1/\epsilon)}\int\limits_{e^{-1/\epsilon}}^{\epsilon}\frac{1}{r \log (1/r) } \, dr \leq   \frac{1}{ \log (1/\epsilon)}.
\end{gather*}
It remains to send $\epsilon$ to zero. 
\end{proof}

\begin{corollary}\label{corr:le1}
Let $\alpha \leq 1$. If $u\in W^{1,\Phi(\cdot)}(\Omega)$ then $u\in H^{1,\Phi(\cdot)}(\Omega)$. If $u\in W_0^{1,\Phi(\cdot)}(\Omega)$ then $u\in H_0^{1,\Phi(\cdot)}(\Omega)$.
\end{corollary}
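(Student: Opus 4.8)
The plan is to assemble the two preceding lemmas. Lemma~\ref{lemma:post} says that a function which vanishes near the origin is automatically in $H^{1,\Phi(\cdot)}(\Omega)$ (resp.\ $H_0^{1,\Phi(\cdot)}(\Omega)$), while Lemma~\ref{lemma:remove} supplies, precisely because $\alpha\le 1$, cut-offs $\eta_\epsilon$ that excise a neighbourhood of the origin at asymptotically vanishing energy. Since $H^{1,\Phi(\cdot)}(\Omega)$ is by construction a closed subspace of $W^{1,\Phi(\cdot)}(\Omega)$, it is enough to approximate a given $u$ in the $W^{1,\Phi(\cdot)}$-norm by functions vanishing near the origin.

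First I would reduce to bounded $u$. For $N>0$ put $u_N:=\max(-N,\min(N,u))$; then $u_N\in W^{1,\Phi(\cdot)}(\Omega)\cap L^\infty(\Omega)$ with $\nabla u_N=\nabla u\,\indicator_{\{\abs{u}<N\}}$, and $u_N\in W_0^{1,1}(\Omega)$ whenever $u$ is. Since $\Phi(x,\abs{\nabla u-\nabla u_N})\le\Phi(x,\abs{\nabla u})\in L^1(\Omega)$ (the latter from $\nabla u\in L^{\Phi(\cdot)}(\Omega)$ and the $\Delta_2$-condition) and $\Phi(x,\abs{\nabla u-\nabla u_N})\to 0$ a.e.\ as $N\to\infty$, dominated convergence gives $\int_\Omega\Phi(x,\abs{\nabla u-\nabla u_N})\,dx\to 0$, and the $\Delta_2$-condition upgrades this modular convergence to norm convergence; together with $u_N\to u$ in $L^1(\Omega)$ this yields $u_N\to u$ in $W^{1,\Phi(\cdot)}(\Omega)$. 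By closedness of $H^{1,\Phi(\cdot)}(\Omega)$ it then suffices to treat $u\in L^\infty(\Omega)$.

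With $u$ bounded, set $v_\epsilon:=\eta_\epsilon u$ with $\eta_\epsilon$ as in Lemma~\ref{lemma:remove}. Then $v_\epsilon$ vanishes in a neighbourhood of the origin; moreover $v_\epsilon-u=(\eta_\epsilon-1)u$ has support in $\overline{B_\epsilon(0)}\subset\Omega$, so from $\nabla v_\epsilon=\eta_\epsilon\nabla u+u\nabla\eta_\epsilon$ with $\abs{\eta_\epsilon\nabla u}\le\abs{\nabla u}\in L^{\Phi(\cdot)}(\Omega)$ and $u\nabla\eta_\epsilon\in L^\infty(\Omega)\subset L^{\Phi(\cdot)}(\Omega)$ (for fixed $\epsilon$, as $\Omega$ has finite measure and $\Phi$ has polynomial growth) we get $v_\epsilon\in W^{1,\Phi(\cdot)}(\Omega)$, and $v_\epsilon\in W_0^{1,\Phi(\cdot)}(\Omega)$ if $u\in W_0^{1,\Phi(\cdot)}(\Omega)$. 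Hence Lemma~\ref{lemma:post} gives $v_\epsilon\in H^{1,\Phi(\cdot)}(\Omega)$ (resp.\ $H_0^{1,\Phi(\cdot)}(\Omega)$).

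It remains to verify $v_\epsilon\to u$ in $W^{1,\Phi(\cdot)}(\Omega)$, which is the computational heart of the argument. The $L^1$-part is immediate since $(\eta_\epsilon-1)u$ is bounded and supported in $B_\epsilon(0)$. For the gradients, write $\nabla v_\epsilon-\nabla u=(\eta_\epsilon-1)\nabla u+u\nabla\eta_\epsilon$: the first term is dominated by $\abs{\nabla u}$ and supported in $B_\epsilon(0)$, so $\int_{B_\epsilon(0)}\Phi(x,\abs{\nabla u})\,dx\to 0$ by absolute continuity of the integral; for the second, $\abs{u\nabla\eta_\epsilon}\le\norm{u}_\infty\abs{\nabla\eta_\epsilon}$ and the $\Delta_2$-condition gives $\int_\Omega\Phi(x,\norm{u}_\infty\abs{\nabla\eta_\epsilon})\,dx\le c(\norm{u}_\infty)\int_\Omega\Phi(x,\abs{\nabla\eta_\epsilon})\,dx\to 0$ by Lemma~\ref{lemma:remove}. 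Using convexity of $\Phi(x,\cdot)$ to combine the two contributions, $\int_\Omega\Phi\big(x,\tfrac12\abs{\nabla v_\epsilon-\nabla u}\big)\,dx\to 0$, and the $\Delta_2$-condition once more turns this into $\norm{\nabla v_\epsilon-\nabla u}_{L^{\Phi(\cdot)}(\Omega)}\to 0$. Thus $v_\epsilon\to u$ in $W^{1,\Phi(\cdot)}(\Omega)$, and since each $v_\epsilon$ lies in the closed subspace $H^{1,\Phi(\cdot)}(\Omega)$, so does $u$; the zero-boundary statement follows verbatim with $H_0^{1,\Phi(\cdot)}$, $W_0^{1,\Phi(\cdot)}$ in place of $H^{1,\Phi(\cdot)}$, $W^{1,\Phi(\cdot)}$. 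I do not expect a genuine obstacle in this corollary: the substantive work is already contained in Lemmas~\ref{lemma:remove} and~\ref{lemma:post}, and the only recurring subtlety is the passage between the modular $\int_\Omega\Phi(x,\cdot)\,dx$ and the Luxemburg norm, which is routine under the $\Delta_2$-condition.
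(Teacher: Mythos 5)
Your proof is correct and follows essentially the same route as the paper: truncate $u$ to reduce to the bounded case, multiply by the cut-offs $\eta_\epsilon$ of Lemma~\ref{lemma:remove}, control the term $u\nabla\eta_\epsilon$ via the $\Delta_2$-condition and the term $(1-\eta_\epsilon)\nabla u$ via dominated convergence, and use Lemma~\ref{lemma:post} together with closedness of $H^{1,\Phi(\cdot)}$ to conclude. The only difference is that you state explicitly the invocation of Lemma~\ref{lemma:post}, which the paper leaves implicit.
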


\begin{proof}
Any function from $W^{1,\Phi(\cdot)}(\Omega)$ or $W_0^{1,\Phi(\cdot)}(\Omega)$ can be approximated by bounded functions (it is enough to consider standard level cuts). So without loss of generality we  assume that $u\in W^{1,\Phi(\cdot)}(\Omega)$ $\|u\|_{L^\infty(\Omega)} <\infty$ (for $u\in W_0^{1,\Phi(\cdot)}(\Omega)$ the proof is the same). Consider 
$$
u_\epsilon = u \eta_\epsilon,
$$
where $\eta_\epsilon$ is defined in Lemma~\ref{lemma:remove}. Then 
$$
\nabla u - \nabla u_\epsilon = -u \nabla \eta_\epsilon + (1-\eta_\epsilon) \nabla u
$$
and by the $\triangle_2$ property
\begin{align*}
\int_{ \Omega} \Phi(x,\abs{\nabla (u-u_\epsilon)} )\, dx  \lesssim \int_\Omega \Phi(x,|\nabla \eta_\epsilon|) \,dx+\int_\Omega \Phi(x,(1-\eta_\epsilon) \abs{\nabla u}) \, dx. 
\end{align*}
The first term on the right-hand side goes to zero by Lemma~\ref{lemma:remove}, and the second term tends to zero by the Lebesgue dominated convergence theorem.
\end{proof}

Recall that $u_+$ and $u_{-}$ are defined by \eqref{def_upm} when $\alpha>1$.
\begin{corollary}\label{corr:ge1}
Let $\alpha >1$ and $u\in W^{1, \Phi(\cdot)}(\Omega)$. If $u_+ = u_{-}$ then $u \in H^{1, \Phi(\cdot)}(\Omega)$. If $u \in W_0^{1, \Phi(\cdot)}(\Omega)$ and $u_+ = u_{-}$ then $u\in H_0^{1, \Phi(\cdot)}(\Omega)$.
\end{corollary}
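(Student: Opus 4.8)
The plan is to approximate $u$ in $W^{1,\Phi(\cdot)}(\Omega)$ by functions vanishing near the origin and then quote Lemma~\ref{lemma:post}. First I would make two harmless reductions. By the standard level cuts, as in the proof of Corollary~\ref{corr:le1}, I may assume $u\in L^\infty(\Omega)$: truncation preserves membership in $W_0^{1,\Phi(\cdot)}(\Omega)$, keeps $u$ continuous on $\overline{C_\pm}$ with the two boundary limits at the origin still equal, and the truncations converge to $u$ in $W^{1,\Phi(\cdot)}(\Omega)$ by the $\triangle_2$-condition and dominated convergence. Next, with $c:=u_+=u_-$ and a fixed $\chi\in C_0^\infty(\Omega)$ with $\chi\equiv 1$ near the origin, I would replace $u$ by $u-c\chi$; since $c\chi\in C_0^\infty(\Omega)\subset H_0^{1,\Phi(\cdot)}(\Omega)\subset H^{1,\Phi(\cdot)}(\Omega)$, this changes neither hypothesis nor conclusion, so I may assume in addition $u_+=u_-=0$. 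By Lemma~\ref{lem:embedding} this yields $\abs{u(x)}\le\omega(\abs{x})$ for $x\in\overline{C_\pm}$ near the origin, with a modulus $\omega$ satisfying $\omega(t)\,\log^{(\alpha-1)/2}(1/t)\to 0$ as $t\to 0^+$.

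For the approximation I would take $\eta_\epsilon$ to be the radial logarithmic cutoff from the proof of Lemma~\ref{lemma:remove} (its defining formula makes sense for every $\alpha$; only the conclusion $\int_\Omega\Phi(x,\abs{\nabla\eta_\epsilon})\,dx\to 0$ used $\alpha\le 1$): it satisfies $\eta_\epsilon\equiv 1$ outside $B_\epsilon(0)$, $\eta_\epsilon\equiv 0$ near the origin, $0\le\eta_\epsilon\le 1$, and $\abs{\nabla\eta_\epsilon(x)}\lesssim(\log\tfrac1\epsilon)^{-1}\abs{x}^{-1}\log^{-1}(1/\abs{x})$ on the annulus where it is non-constant. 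Set $u_\epsilon:=u\,\eta_\epsilon$. Then $u_\epsilon\in W^{1,\Phi(\cdot)}(\Omega)$ (respectively $W_0^{1,\Phi(\cdot)}(\Omega)$, as $\eta_\epsilon\equiv 1$ near $\partial\Omega$) and $u_\epsilon$ vanishes near the origin, so $u_\epsilon\in H^{1,\Phi(\cdot)}(\Omega)$ (respectively $H_0^{1,\Phi(\cdot)}(\Omega)$) by Lemma~\ref{lemma:post}. Since $u-u_\epsilon=u(1-\eta_\epsilon)$ is bounded and supported in $B_\epsilon(0)$, $u_\epsilon\to u$ in $L^1(\Omega)$, so by the $\triangle_2$-condition it remains to show $\int_\Omega\Phi(x,\abs{\nabla(u-u_\epsilon)})\,dx\to 0$. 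Writing $\nabla(u-u_\epsilon)=(1-\eta_\epsilon)\nabla u-u\nabla\eta_\epsilon$ and using convexity together with $\triangle_2$,
\begin{align*}
\int_\Omega\Phi\big(x,\abs{\nabla(u-u_\epsilon)}\big)\,dx\lesssim\int_\Omega\Phi\big(x,(1-\eta_\epsilon)\abs{\nabla u}\big)\,dx+\int_\Omega\Phi\big(x,\abs{u}\,\abs{\nabla\eta_\epsilon}\big)\,dx,
\end{align*}
and the first term tends to $0$ by dominated convergence, being bounded by $\Phi(x,\abs{\nabla u})\in L^1(\Omega)$ and vanishing pointwise off the origin.

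The heart of the matter is $\int_\Omega\Phi(x,\abs{u}\,\abs{\nabla\eta_\epsilon})\,dx\to 0$, which is exactly where $\alpha>1$ competes with the decay of $u$. Using $\Phi(x,t)\le\varphi(t)+\psi(t)$ I split this integral into a $\varphi$-part over $\Omega$ and a $\psi$-part over $\{a=1\}=C_+\cup C_-$. The $\varphi$-part is independent of $x$ and harmless: with $\abs{u}\le\norm{u}_{L^\infty}$ and $\varphi(t)\lesssim t^2\log(e+t)$ one finds it is $\lesssim\norm{u}_{L^\infty}^2(\log\tfrac1\epsilon)^{-1}\to 0$ (in the sub-range where $\varphi$ itself embeds into $C^0$ one instead uses, as above, that $u$ is continuous on $\{a=0\}$ with limit $0$ at the origin, forced along the rays $\abs{x_1}=\abs{x_2}$, and argues as below). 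On $C_\pm$ I would use $\abs{u(x)}\le\omega(\abs{x})$, the gradient bound on $\eta_\epsilon$, and $\log(e+\abs{u}\abs{\nabla\eta_\epsilon})\lesssim\log(1/\abs{x})$ on the support of $\nabla\eta_\epsilon$ to obtain there
\begin{align*}
\psi\big(\abs{u}\abs{\nabla\eta_\epsilon}\big)(x)\lesssim\frac{\omega(\abs{x})^2\,\log^{\alpha-2}(1/\abs{x})}{\big(\log\tfrac1\epsilon\big)^2\,\abs{x}^2},
\end{align*}
and then, integrating in polar coordinates and substituting $s=\log(1/r)$,
\begin{align*}
\int_{C_\pm}\psi\big(\abs{u}\abs{\nabla\eta_\epsilon}\big)\,dx\lesssim\frac{1}{\big(\log\tfrac1\epsilon\big)^2}\int_{\log(1/\epsilon)}^{1/\epsilon}\frac{\delta(s)}{s}\,ds,\qquad\delta(s):=\omega(e^{-s})^2\,s^{\alpha-1},
\end{align*}
where $\delta(s)\to 0$ as $s\to\infty$ by Lemma~\ref{lem:embedding}. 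Hence the right-hand side is $\le(\log\tfrac1\epsilon)^{-2}\big(\sup_{s\ge\log(1/\epsilon)}\delta(s)\big)\int_{\log(1/\epsilon)}^{1/\epsilon}\tfrac{ds}{s}\le(\log\tfrac1\epsilon)^{-1}\sup_{s\ge\log(1/\epsilon)}\delta(s)\to 0$. Putting the pieces together gives $u_\epsilon\to u$ in $W^{1,\Phi(\cdot)}(\Omega)$, and since each $u_\epsilon$ lies in the closed subspace $H^{1,\Phi(\cdot)}(\Omega)$ (respectively $H_0^{1,\Phi(\cdot)}(\Omega)$), so does $u$.

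The main obstacle is precisely this $\psi$-estimate on $C_\pm$: for $\alpha>1$ a plain cutoff has $\psi$-energy blowing up like $\epsilon^{1-\alpha}(\log\tfrac1\epsilon)^{-2}$ — the mechanism behind the Lavrentiev gap detected through Lemma~\ref{lem:Lavr} — and the argument only closes because the modulus from Lemma~\ref{lem:embedding} supplies exactly the matching $\log^{1-\alpha}$-decay of $\abs{u}^2$, so that after the substitution $s=\log(1/r)$ the integrand becomes $\delta(s)/s$ with $\delta\to 0$, whose integral over $(\log\tfrac1\epsilon,1/\epsilon)$ is $o(\log\tfrac1\epsilon)$ and is therefore beaten by the $(\log\tfrac1\epsilon)^{-2}$ prefactor coming from $\abs{\nabla\eta_\epsilon}^2$. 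Everything else is routine.
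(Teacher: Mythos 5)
Your proof is correct and follows essentially the same route as the paper: reduce to $u$ bounded with $u_+=u_-=0$ (the paper subtracts $u(0)$ resp.\ $u(0)\eta$, you subtract $c\chi$ in both cases, a harmless unification), multiply by the logarithmic cutoff $\eta_\epsilon$ from Lemma~\ref{lemma:remove}, invoke Lemma~\ref{lemma:post}, and close the energy estimate for $u\nabla\eta_\epsilon$ on $C_\pm$ using the modulus of continuity from Lemma~\ref{lem:embedding}, with the same $\beta\ge-1$ versus $\beta<-1$ dichotomy for the $\varphi$-part. The only inessential difference is that you invoke the refined decay $\omega(t)\log^{(\alpha-1)/2}(1/t)\to0$, whereas the paper's computation already gives $I_\epsilon\lesssim1/\log(1/\epsilon)$ from the cruder bound $\omega(t)\lesssim\log^{(1-\alpha)/2}(1/t)$.
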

\begin{proof}
Set $u(0) = u_{+} = u_{-}$. Due to Lemma~\ref{lem:embedding} we have
\begin{gather*}
 \abs{u(x)-u(0)}\lesssim  \log^\frac{1-\alpha}{2} \frac{1}{\abs{x}},\quad\text{if }  x\in C_{+}\cup C_{-}.%, \\
%\gamma(r):= \omega(r) \log ^\frac{\alpha-1}{2} \frac{1}{r} \to 0 \quad \text{as} \quad r\to 0.
\end{gather*}
Let 
$$
u_\epsilon = u(0) + (u-u(0)) \eta_\epsilon.
$$
By Lemma~\ref{lemma:post} we have $u_\epsilon \in  H^{1,\Phi(\cdot)}(\Omega)$. Without loss we can assume $u$ to be bounded, $\|u\|_\infty \leq M <\infty$. We claim that $u_\epsilon\to u$ in $W^{1,\Phi(\cdot)}(\Omega)$, therefore $u \in H^{1,\Phi(\cdot)}(\Omega)$. We have to check that
$$
\int_\Omega \Phi (x,\abs{ \nabla (u_\epsilon - u)})\, dx \to 0 \quad \text{as}\quad \epsilon \to 0.
$$
First we prove that 
$$
I_\epsilon:=\int_{C_{+} \cup C_{-}} \Phi(x, \abs{(u-u(0)) \nabla \eta_\epsilon} )\, dx \to 0\quad \text{ as }\quad \epsilon \to 0.
$$
 Using polar coordinates we estimate  
\begin{align*}
 I_\epsilon &\lesssim \int_{C_{+}\cup C_{-}} (u-u(0))^2\abs{\nabla \eta_\epsilon}^2\log^\alpha(e+\abs{u-u(0)}\abs{\nabla \eta_\epsilon})\, dx\\
 &\lesssim \int_{e^{-1/\epsilon}}^\epsilon \frac{\log^{1-\alpha} \frac{1}{r}}{r^2 \log^2 \frac{1}{r} \log^2 \frac{1}{\epsilon}}  \log^\alpha \left(e + \frac{2M}{r \log \frac{1}{r} \log \frac{1}{\epsilon}} \right) r\,dr  \\
&\lesssim \frac{1}{\log^2 \frac{1}{\epsilon}} \int_{e^{-1/\epsilon}}^\epsilon \frac{1}{r \log \frac{1}{r}} \, dr \lesssim \frac{1}{ \log \frac{1}{\epsilon}} \to 0  \,\text{ as } \epsilon \to 0. 
\end{align*}
For~$\beta\ge -1$ 
\begin{align*}
 J_{\epsilon}=\int_{\Omega\setminus (C_+\cup C_-)} \Phi(x,\abs{(u-u(0))}\nabla \eta_\epsilon)\, dx \to 0\quad \text{ as }\epsilon \to 0
\end{align*}
by Lemma~\ref{lemma:remove}.   

If~$\beta<-1$ then~$C(\bar \Omega)$ with the same argument as in Lemma~\ref{lem:embedding} with modulus of continuity~$\log^{\frac{1+\beta}{2}} \frac 1 t$ and convergence of~$J_\epsilon \to 0$ is by the same argument as above for~$I_\epsilon$, where~$\alpha$ is replaced by~$-\beta$. Then
\begin{align*}
 \int_\Omega \Phi (x,\abs{ \nabla (u_\epsilon - u)})\, dx  \lesssim I_\epsilon+J_\epsilon+ \int_\Omega \Phi(x,(1-\eta_\epsilon)\abs{\nabla u} )\, dx,
\end{align*} where the last term goes to zero by Lebesgue theorem. 
  
 For the second statement of the lemma regarding functions with zero boundary values we take $\eta \in C_0^\infty(\Omega)$ such that $\eta=1$ in a neighborhood of the origin and set 
 $$
 u_\epsilon = u(0) \eta + (u-u(0) \eta) \eta_\epsilon.
 $$
By  Lemma~\ref{lemma:post} we have  $u_\epsilon \in  H_0^{1,\Phi(\cdot)}(\Omega)$.  Clearly,
$$
 \nabla u_\epsilon -\nabla u   = (\nabla u - u(0) \nabla \eta) (\eta_\epsilon-1)  + (u-u(0)\eta) \nabla \eta_\epsilon.  % \eta _\epsilon \nabla u + u(0) (1- \eta_\epsilon)  \nabla \eta  + (u-u(0)) \nabla \eta_\epsilon.
$$
The first term on the right-hand side converges to zero in $L^{\Phi(\cdot)}(\Omega)$ by the Lebesgue theorem, and the second term term converges to zero by the same argument as above. Therefore, $u_\epsilon \to u$ in $H_0^{1,\Phi(\cdot)}(\Omega)$ and $u\in H_0^{1,\Phi(\cdot)}(\Omega)$.
\end{proof}

Now we prove the reverse statement.
\begin{proposition}\label{prop:coincide}
Let $\alpha>1$. If $u\in H^{1,\Phi(\cdot)}(\Omega)$ then $u_{+}= u_{-}$.
\end{proposition}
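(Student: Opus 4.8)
The plan is to show that the trace-type limit functional $u \mapsto u_{+} - u_{-}$ is continuous on $W^{1,\Phi(\cdot)}(\Omega)$ in the $W^{1,\Phi(\cdot)}$-topology restricted to functions for which the limits $u_\pm$ exist, and that it vanishes on $C^\infty(\Omega)$; since $H^{1,\Phi(\cdot)}(\Omega)$ is by definition the closure of $C^\infty(\Omega) \cap W^{1,\Phi(\cdot)}(\Omega)$, the conclusion $u_+ = u_-$ for $u \in H^{1,\Phi(\cdot)}(\Omega)$ follows. The concrete device for extracting $u_+ - u_-$ in a way that pairs continuously against $W^{1,\Phi(\cdot)}$ is the vector field $b_2$ from Definition~\ref{def:zhikov}: by Lemma~\ref{lem:Lavr}, $b_2 \in L^{\Phi^*(\cdot)}(\Omega)$ (here we use $\alpha > 1$), so the linear functional $w \mapsto \int_\Omega b_2 \cdot \nabla w\, dx$ is bounded on $W^{1,\Phi(\cdot)}(\Omega)$ by the generalized Hölder inequality. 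The point is to identify this functional, evaluated at $u$, with a nonzero multiple of $u_+ - u_-$ (up to boundary terms that are the same for every $u$ with a given boundary datum, which we can normalize away, or simply work with $W_0^{1,\Phi(\cdot)}$ first and then reduce the general case by subtracting a fixed smooth function with the right trace).

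The key computation is an integration by parts. Writing $b_2 = \divergence A_2$ with $A_2 \in W^{1,1}_{\loc} \cap C^\infty(\setR^2\setminus\{0\})$ antisymmetric, one has, away from the origin, $\int b_2 \cdot \nabla w = -\int A_2 : \nabla^2 w$ formally, but the cleaner route is to exploit $b_2 = \nabla^\perp v$ from the text: then $\int_\Omega b_2 \cdot \nabla w\, dx = \int_\Omega \nabla^\perp v \cdot \nabla w\, dx$, which for smooth $w$ integrates to a boundary term, showing this functional annihilates $C^\infty(\Omega)$ up to the fixed boundary contribution. For $w = u_2$ itself, Proposition~\ref{pro:prop-uAb-d}(6) gives $\int_{\partial\Omega}(b_2\cdot\nu)u_2\, dS = 1$, and a direct computation on the regions where $\nabla u$ and $b_2$ are supported (they are disjoint cones by Proposition~\ref{pro:prop-uAb-d}(5)) shows that for general $u \in W^{1,\Phi(\cdot)}(\Omega)$ the quantity $\int_\Omega b_2 \cdot \nabla u\, dx$ equals $c\,(u_+ - u_-)$ plus boundary terms, with $c \neq 0$. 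So: pick $u \in H^{1,\Phi(\cdot)}(\Omega)$, approximate by $u_k \in C^\infty(\Omega)\cap W^{1,\Phi(\cdot)}(\Omega)$ in $W^{1,\Phi(\cdot)}$; the functional $\int_\Omega b_2\cdot\nabla(\cdot)\,dx$ is continuous, so $\int b_2\cdot\nabla u_k \to \int b_2 \cdot \nabla u$; each $u_k$ is smooth, so the left side is a pure (fixed) boundary term; meanwhile Lemma~\ref{lem:embedding} guarantees $u_\pm$ exist and depend continuously on $\nabla u$ in $L^{\psi(\cdot)}(C_\pm)$, hence $u_{k,+} - u_{k,-} \to u_+ - u_-$; but $u_{k,+} = u_{k,-}$ trivially for smooth $u_k$ near $0$... wait, a smooth function on $\Omega$ need not vanish near $0$, but it is continuous at $0$, so automatically $u_{k,+} = u_{k,-} = u_k(0)$. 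Passing to the limit gives $u_+ = u_-$.

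The main obstacle is making the integration-by-parts identity $\int_\Omega b_2\cdot\nabla u\, dx = c(u_+ - u_-) + (\text{fixed boundary term})$ rigorous for merely $W^{1,\Phi(\cdot)}$ functions, since $b_2 \notin L^\infty$ and the identity genuinely uses the cone structure of $\operatorname{supp} b_2$ together with the Lemma~\ref{lem:embedding} continuity to control the behavior near the origin. I would handle this by a cutoff argument: insert $\eta_\delta$ supported away from $0$ (as in Lemma~\ref{lemma:remove}, but here only $\alpha > 1$ is available, so instead use a plain annular cutoff and the decay $|u - u_\pm| \lesssim \log^{(1-\alpha)/2}(1/|x|)$ from Lemma~\ref{lem:embedding} against $|b_2| \lesssim |x|^{-1}$ on its support), integrate by parts on $\Omega \setminus B_\delta(0)$ where everything is smooth, and let $\delta \to 0$, checking that the annular error $\int_{B_{2\delta}\setminus B_\delta} |u - u_\pm|\,|b_2|\, dx$ vanishes. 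Alternatively, and perhaps more slickly, one bypasses the pointwise identity entirely: test the definition $\int_{\partial\Omega}(b_2\cdot\nu)u_2 = 1$ against $u - (\text{the canonical } u_2\text{-type function scaled by } u_+ - u_-)$, reducing to a function whose two limits agree, which is then in $H$ by Corollary~\ref{corr:ge1}, and use that $\int b_2 \cdot \nabla(\cdot)$ vanishes on such functions — but this is circular unless one already knows the functional is well-defined and continuous, which is exactly Lemma~\ref{lem:Lavr}. So the cutoff route is the safe one.
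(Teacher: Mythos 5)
Your proposal contains the paper's proof, buried in the middle as an aside: if $u_k\in C^\infty(\Omega)\cap W^{1,\Phi(\cdot)}(\Omega)$ converge to $u$ in $W^{1,\Phi(\cdot)}$, then by Lemma~\ref{lem:embedding} the $u_k$ are equicontinuous on $\overline{C_+}\cup\overline{C_-}$ (modulus controlled by $\norm{\nabla u_k}_{L^{\psi(\cdot)}(C_\pm)}$, which is bounded), and together with the $W^{1,1}$-convergence this yields uniform convergence $u_k\to u$ on $\overline{C_+}\cup\overline{C_-}$; each smooth $u_k$ is continuous at $0$, so $u_{k,+}=u_{k,-}$, and $u_{k,\pm}\to u_\pm$, whence $u_+=u_-$. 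That is the whole argument, and it is exactly what the paper does.

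Everything else in your proposal --- the linear functional $w\mapsto\int_\Omega b_2\cdot\nabla w\,dx$, the integration-by-parts identity with a constant $c$, and the annular cutoff needed to justify it for rough $u$ --- is an unnecessary detour. The identity you want is correct in the form $\int_\Omega b_2\cdot\nabla u\,dx=\int_{\partial\Omega}(b_2\cdot\nu)u\,dS-(u_+-u_-)$, i.e.\ $c=-1$ (test it on $u_2$ using the pointwise disjointness $\abs{\nabla u_2}\abs{b_2}=0$ and the normalization $\int_{\partial\Omega}(b_2\cdot\nu)u_2\,dS=1$ from Proposition~\ref{pro:prop-uAb-d}: both sides vanish). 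But proving it for general $u\in W^{1,\Phi(\cdot)}(\Omega)$ requires precisely the near-origin control of $u$ provided by Lemma~\ref{lem:embedding}, so this route cannot be shorter than the direct one, and it adds the delicate issue of passing boundary terms and a vanishing inner flux to the limit. In the paper the $b_2$-functional is used for a different job: as the separating linear functional on $W_0^{1,\Phi(\cdot)}(\Omega)$ in the proof of Theorem~\ref{thm:density}, where the additional hypothesis $\beta>1$ is also available. You sensed mid-proposal that the functional-analytic route was circuitous; the remedy is simply to drop the $b_2$ scaffolding and present the equicontinuity argument on its own.
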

\begin{proof}

Let $u_\epsilon \to u$ in $W^{1,\Phi(\cdot)}(\Omega)$, $u_\epsilon \in C^\infty(\Omega)$.  By Lemma~\ref{lem:embedding}, $u_\epsilon$ are uniformly continuous in $\overline{C_+} \cup \overline{C_-}$ and uniformly converge to $u$ on this set. So the limit function $u$ is continuous in $\overline{C_+} \cup \overline{C_-}$. Hence its limit values $u_{+}$ and $u_{-}$ coincide.
\end{proof}

Now we are ready to prove Theorem~\ref{thm:density}.

\begin{proof}[Proof of Theorem~\ref{thm:density}]

If $-\beta \geq \alpha$ then $\Phi(x,t) \sim t^2 \log^{-\beta} (e+t)$, and is regular by standard theory (mollifications). Further we assume that $-\beta < \alpha$.

If $\alpha\leq 1$ the statement is  by Corollary~\ref{corr:le1}. 

If $\alpha>1$ and $\beta \in [0,1]$ we first use Lemma~\ref{lem:lim_value_coin} and then Corollary~\ref{corr:ge1}. 

If $\alpha>1$ and $\beta>1$ we conclude the proof by application of Lemma~\ref{lem:Lavr} and~\cite[Theorems 26, 28]{BalDieSur20}. We reproduce this argument for convenience  of the reader.  First, 
\begin{equation}\label{solenoid}
\int_\Omega b_2 \cdot \nabla \varphi\, dx =0 
\end{equation}
for any $\varphi\in C_0^\infty(\Omega)$ (recall that $b_2=\nabla^\perp v$), and hence for $\varphi \in H_0^{1,\Phi(\cdot)}(\Omega)$. Second, let  $\eta \in C_0^\infty(\Omega)$ such that $\eta=1$ in a neighborhood of the origin.  We have 
$$
\int_\Omega b_2 \cdot \nabla (\eta u_2)\, dx = \int_\Omega b_2 \cdot \nabla u_2 \, dx - \int_\Omega b_2 \cdot \nabla ((1-\eta)u_2) \, dx    =-1
$$
by Proposition~\ref{pro:prop-uAb-d}, Lemma~\ref{lemma:post} and \eqref{solenoid}. Therefore $\eta u_2 \in W_0^{1,\Phi(\cdot)}(\Omega)$ but it can not be approximated by smooth functions. The functional $w \mapsto \int_\Omega b_2 \cdot \nabla w\, dx$ is a \textit{separating functional} --- it is a nontrivial linear bounded functional on $W_0^{1,\Phi(\cdot)}(\Omega)$, vanishing on its subspace $H_0^{1,\Phi(\cdot)}(\Omega)$.  To demonstrate the Lavrentiev gap, evaluate 
$$
\mathcal{G}(u) = \int_\Omega \Phi(x, \abs{\nabla u})\, dx + \int_\Omega b_2 \cdot \nabla u\, dx
$$
on $t u_2$: by Proposition~\ref{pro:prop-uAb-d} and $\nabla_2$ property of $\Phi$ for sufficiently small $t>0$ there holds
$$
\mathcal{G}(tu_2) = \int_\Omega \Phi(x, t\abs{\nabla u})\, dx - t \leq \frac{t}{2} - t <0.
$$
 On the other hand, \eqref{solenoid} implies $\mathcal{G}(w) \geq 0$ for any $w\in  H_0^{1,\Phi(\cdot)}(\Omega) $.

Another way to show that $\eta u_2 \notin H_0^{1,\Phi(\cdot)}(\Omega)$ is by Proposition~\ref{prop:coincide} since obviously $(\eta u_2)_{+} \neq (\eta u_2)_{-}$.

To prove that the codimension of $H_0^{1,\Phi(\cdot)}(\Omega)$ in $W_0^{1,\Phi(\cdot)}(\Omega)$ is one, we note that for any $u\in  W_0^{1,\Phi(\cdot)}(\Omega)$ the function $w = u - (u_+ - u_{-})\eta u_2$ has the same limit values $w_{+}$ and $w_{-}$:
$$
w_{+} - w_{-} = u_+ - u_{-} - (u_+-u_{-}) ((u_{2})_{+} - (u_2)_-  ) =  u_+ - u_{-} - (u_+-u_{-})  = 0.
$$
 Therefore $w\in H_0^{1,\Phi(\cdot)}(\Omega) $ by Corollary~\ref{corr:ge1}. The proof of Theorem~\ref{thm:density} is complete.
\end{proof}

Now we turn to the question of regularity of the  solution of  variational problems~$\mathcal{E}_1$ and $\mathcal{E}_2$, see \eqref{eq:E12}.  Let us start by introducing spaces with boundary values: for
$g \in H^{1,\Phix}(\Omega)$ we define
\begin{align*}
  H_g^{1,\Phix}(\Omega) &:= g + H_0^{1,\Phix}(\Omega).
\end{align*}
For $g \in W^{1,\Phix}(\Omega)$ we define
\begin{align*}
  W_g^{1,\Phix}(\Omega) &:= g + W_0^{1,\Phix}(\Omega).
\end{align*}
We can define
\begin{align*}
  h_W(g) &= \argmin \mathcal{F}\big(W^{1,\Phix}_{g}(\Omega)\big).
  % \\
  % h_HW &= \argmin \mathcal{F}\big(H^{1,\Phix}_g(\Omega)\big).
\end{align*}
Formally, it satisfies the Euler-Lagrange equation (in the weak sense)
\begin{align*}
  -\Delta_\Phix h_W := -\divergence \bigg( \frac{\Phi'(x,\abs{\nabla h_W})}{\abs{\nabla
  h_W}} \nabla h_W \bigg) &= 0 \qquad \text{ in $
                            (W^{1,\Phix}_0(\Omega))^*$},
\end{align*}
where $\Phi'(x,t)$ is the derivative with respect to~$t$.
However, we can define also
\begin{align*}
  h_H(g) &= \argmin \mathcal{F}\big(H^{1,\Phix}_g(\Omega)\big).
\end{align*}
Then

\begin{align*}
  -\Delta_\Phix h_H :=-\divergence \bigg( \frac{\Phi'(x,\abs{\nabla h_H})}{\abs{\nabla
  h_H}} \nabla h_H \bigg) &= 0 \qquad \text{ in }  (H^{1,\Phix}_0(\Omega))^*. \end{align*}
 Thus $h_W$ and $h_H$ are both~$\Phix$-harmonic but $h_W$ is
  $\Phix$-harmonic in the sense of~$W^{1,\Phix}$ and $h_H$ is
  $\Phix$-harmonic with respect to~$H^{1,\Phix}$. These solutions are different in the situation of Lavrentiev gap (see~\cite[Theorem 30]{BalDieSur20}). Indeed, let~$\eta\in C_0^\infty(\bar{\Omega})$ be zero in the neighbourhood of the origin and be 1 in the neighbourhood of~$\partial\Omega$. Then there exist sufficiently large~$t$ such that for~$g=t\eta u_2 \in H^{1,\Phi(\cdot)}(\Omega)$ we have~$h_H(g)\neq h_W(g)$.
\begin{theorem}
Let $\alpha,\beta>1$. Any $H$-minimizer $h_H$ is continuous in $\Omega$. Any $W$-minimizer $h_W$ that is not equal to $h_H$ is discontinuous at the origin. 
\end{theorem}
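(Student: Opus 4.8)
The plan is to prove the two assertions separately. The statement about $h_W$ will be a short consequence of the machinery built for Theorem~\ref{thm:density}; the statement about $h_H$ needs interior regularity for $\mathcal F$, and the behaviour at the saddle point will be the crux. I would first record that $h_W$ and $h_H$ exist and are unique: $\mathcal F$ is coercive on $W^{1,\Phi(\cdot)}_g(\Omega)$ since $\Phi(x,t)\ge c_1t^2-c_0$, it is sequentially weakly lower semicontinuous by convexity of $\xi\mapsto\Phi(x,\abs{\xi})$, the classes $W^{1,\Phi(\cdot)}_g(\Omega)$ and $H^{1,\Phi(\cdot)}_g(\Omega)$ are convex and norm-closed in the reflexive space $W^{1,\Phi(\cdot)}(\Omega)$, and strict convexity of $\xi\mapsto\Phi(x,\abs{\xi})$ forces uniqueness. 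Next, combining Proposition~\ref{prop:coincide} with Corollary~\ref{corr:ge1} one gets $H^{1,\Phi(\cdot)}(\Omega)\cap W_0^{1,\Phi(\cdot)}(\Omega)=H_0^{1,\Phi(\cdot)}(\Omega)$; hence if $h_W\in H^{1,\Phi(\cdot)}(\Omega)$ then $h_W-g\in H_0^{1,\Phi(\cdot)}(\Omega)$, so $h_W\in H^{1,\Phi(\cdot)}_g(\Omega)$ and $\mathcal F(h_H)\le\mathcal F(h_W)\le\mathcal F(h_H)$, which by uniqueness of the $H$-minimizer gives $h_W=h_H$. Therefore, whenever $h_W\ne h_H$ we have $h_W\notin H^{1,\Phi(\cdot)}(\Omega)$, and the contrapositive of the first part of Corollary~\ref{corr:ge1} yields $(h_W)_+\ne(h_W)_-$, the two limits existing by Lemma~\ref{lem:embedding} (here $\alpha>1$ is used). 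Since $h_W$ then has two distinct limits at the origin, along $C_+$ and along $C_-$, it is discontinuous there, which settles the second assertion.

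For continuity of $h_H$ away from the origin I would argue locally. At a point $x_0\in\Omega\setminus\set{0}$ not lying on a diagonal $\set{\abs{x_1}=\abs{x_2}}$ the weight $a$ is constant near $x_0$, so $h_H$ minimizes $\int\Phi(\abs{\nabla u})\,dx$ for a fixed $C^1$ Orlicz integrand satisfying $\Delta_2$ and $\nabla_2$, and hence $h_H\in C^{1,\gamma}_{\loc}$ near $x_0$ by the regularity theory for minimizers of autonomous Orlicz functionals. At a point $x_0\ne 0$ on a diagonal the weight $a$ jumps across a line; since $\varphi(t)\lesssim\Phi(x,t)\lesssim\psi(t)$ with only slowly varying logarithmic factors, $h_H$ belongs to a De Giorgi class and is therefore continuous at $x_0$ (cf.~\cite{skr20}). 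The same De Giorgi estimate gives $h_H\in L^\infty_{\loc}(\Omega)$. Thus $h_H\in C(\Omega\setminus\set{0})\cap L^\infty_{\loc}(\Omega)$.

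It remains to treat the origin. Since $h_H\in H^{1,\Phi(\cdot)}(\Omega)$, Proposition~\ref{prop:coincide} gives $(h_H)_+=(h_H)_-=:L$, and by Lemma~\ref{lem:embedding} $h_H$ is continuous on $\overline{C_+}\cup\overline{C_-}$ with value $L$ at $0$. What is left is to show $h_H(x)\to L$ as $x\to 0$ within $G:=\set{\abs{x_1}>\abs{x_2}}\cap\Omega$, where $a\equiv 0$, $\Phi=\varphi$, and $h_H$ is $\varphi$-harmonic. Each of the two components of $G$ near $0$ has, as exterior set, a full sector with vertex $0$, so $0$ satisfies an exterior cone condition and is a regular boundary point for the $\varphi$-Laplacian; since the boundary values of $h_H$ on $\partial G$ near $0$ are the traces on the two diagonal segments, which are continuous with value $L$ at $0$ as restrictions of the continuous function $h_H$ on $\overline{C_\pm}$, the solution $h_H$ is continuous at $0$ from within $G$. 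Concretely, for small $\rho$ one may compare $h_H-L$ on $G\cap B_\rho$ with the explicit supersolution $v_\rho=\omega(\rho)+c\,M(\rho)\,\rho^{-2}(x_1^2-x_2^2)$, where $M(\rho)=\norm{h_H-L}_{L^\infty(G\cap B_\rho)}$, $\omega(\rho)$ is the modulus from Lemma~\ref{lem:embedding}, and $c$ is fixed using the continuity of $h_H$ on $\partial B_\rho$: on $G$ one has $\divergence\big(\varphi'(\abs{\nabla v_\rho})\abs{\nabla v_\rho}^{-1}\nabla v_\rho\big)\le 0$ because $t\mapsto\varphi'(t)/t$ is decreasing, and the comparison principle is available since any competitor to $h_H$ on $G\cap B_\rho$, extended by zero, lies in $H_0^{1,\Phi(\cdot)}(\Omega)$ by Corollary~\ref{corr:ge1}. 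Letting $\rho\to 0$ gives $h_H(x)\to L$, and together with the previous paragraph this yields $h_H\in C(\Omega)$.

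I expect the main obstacle to be exactly this last step. On $G$ the integrand degenerates to $\varphi(t)=t^2\log^{-\beta}(e+t)$, whose generalized Orlicz space does not embed into $C^0$ in the plane, so continuity of $h_H$ at the saddle point, approached through $G$, cannot be read off from membership in $H^{1,\Phi(\cdot)}$ and has to be extracted from minimality — via the exterior cone / barrier argument above, together with the elementary observation that $\varphi'(t)/t$ is decreasing, which is what makes $x_1^2-x_2^2$ a $\varphi$-supersolution on $G$. The interior regularity inputs of the second paragraph (continuity and local boundedness of $h_H$ off the origin) are standard for this borderline double phase growth but need the discontinuous weight to be accommodated, which is why I would quote the De Giorgi-class results of \cite{skr20} rather than the log-Hölder theory of the earlier literature.
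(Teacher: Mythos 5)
Your two-part skeleton matches the paper's: use Proposition~\ref{prop:coincide} to get $(h_H)_+=(h_H)_-$ and the embedding Lemma~\ref{lem:embedding} on $\overline{C_\pm}$, then handle the $a\equiv 0$ wedges $\Omega_R,\Omega_L$ by the autonomous $\varphi$-Laplace theory; and for $h_W$, observe that $h_W\neq h_H$ forces $(h_W)_+\neq(h_W)_-$ (your uniqueness argument is the correct way to make the paper's terse final sentence precise). However, two of the steps you supply for the continuity of $h_H$ do not hold as stated.

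First, the appeal to a De Giorgi class and to~\cite{skr20} at a diagonal point $x_0\neq 0$ is not available. Those results, like the log-H\"older theory they extend, require the weight $a$ to have a modulus of continuity matched to the gap between $\varphi$ and $\psi$; here $a$ is genuinely discontinuous across the diagonal while $\psi(t)/\varphi(t)=\log^{\alpha+\beta}(e+t)\to\infty$, precisely the regime where the theorem is proving a Lavrentiev gap. If a De Giorgi estimate applied to bounded $W^{1,\Phi(\cdot)}$-minimizers under these hypotheses, it would also apply to $h_W$ and make $h_W$ continuous, contradicting the second assertion of the theorem. The paper avoids this by never needing an interior estimate across the interface: $h_H$ is continuous on $\overline{C_\pm}$ from the embedding, and continuous on $\overline{\Omega_R}$, $\overline{\Omega_L}$ by \emph{boundary} regularity for the autonomous $\varphi$-Laplacian with the continuous Dirichlet data supplied on $E_R$, $E_L$ (quoting Lieberman~\cite{Lie91}); continuity across the diagonal is then a gluing of two one-sided boundary limits, not an interior estimate.

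Second, the barrier $v_\rho=\omega(\rho)+c\,M(\rho)\rho^{-2}(x_1^2-x_2^2)$ does not close the comparison on $\partial(G\cap B_\rho)$, because $x_1^2-x_2^2$ vanishes on the diagonals $\theta=\pm\pi/4$: near the endpoints of the arc $\partial B_\rho\cap G$ one has $v_\rho\to\omega(\rho)$, while the only a priori bound on $h_H-L$ there is $M(\rho)$, so no finite $c$ makes $v_\rho\ge h_H-L$ on all of the arc. The standard fix is to use a harmonic barrier $r^\mu\cos(\mu\theta)$ with $1<\mu<2$, i.e.\ the first Dirichlet eigenfunction of a sector of opening slightly larger than $\pi/2$: it is strictly positive on $\overline{G}\setminus\{0\}$ near the origin, is still a $\varphi$-supersolution on $G$ because $t\mapsto\varphi'(t)/t$ is decreasing and $|\nabla(r^\mu\cos\mu\theta)|$ is radial and increasing in $r$, and now dominates $M(\rho)$ on the whole arc for a fixed $c=c(\mu)$. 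With that change your third paragraph becomes a self-contained proof of the exterior cone/Wiener regularity that the paper subsumes under the Lieberman citation.
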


\begin{proof}
For $H$-minimizer $h_H$ there holds $(h_H)_{+}$ = $(h_H)_-$ by Proposition~\ref{prop:coincide}. This and Lemma~\ref{lem:embedding} give the continuity of $h_H$ on $E_R=\{x_1 = |x_2|\}\cap \Omega$ and $E_L=\{-x_1 = |x_2|\}\cap \Omega$. In $\Omega_R=\{x_1 > |x_2|\} \cap \Omega$ and $\Omega_L=\{-x_1 > |x_2|\}\cap \Omega$ the minimizer $h_H$ is a solution of $\Phix$-Laplace equation with $\Phix (x,t) =  t^2 \log^{-\beta} (e+t)$ independent of $x$. This and continuous boundary data on $E_R$ and $E_L$ guarantee, by result due to Lieberman in~\cite{Lie91}, that $u$ is continuous in $\Omega_R$ and $\Omega_L$. Therefore $u$ is continuous in $\Omega$. If $h_W \neq h_H$, then $(h_W)_+ \neq (h_W)_{-}$.
 
\end{proof}

\section{Generalized double-phase type integrands}
We consider integrands  of the type
\begin{align}
\label{eq:integrand}
 \Phi(x,t):=\phi(t)+a(x)\psi(t),
\end{align}
where~$\phi$, $\psi$ are the Orlicz functions, which satisfy~$\Delta_2$ and~$\nabla_2$ conditions and the non-standard growth conditions:
\begin{align*}
 \abs{t}^{p_-}&\le \phi(t)\le c_1\abs{t}^{p_+}+c_2, \\
 \abs{t}^{p_-}&\le \psi(t)\le c_1\abs{t}^{p_+}+c_2,  
\end{align*}
where~$1<p_-<p_+<\infty$, $c_1>0$, and the weight $a(\cdot)$ is as in the previous section, that is 
$$ 
a(x_1,x_2) = \begin{cases}
1, \quad & |x_1|<|x_2|,\\
0,\quad & |x_1| > |x_2|.
\end{cases}
$$
We also assume that~$\phi\le c_3\psi +c_4$ and 
$$
\frac{\varphi(t)}{\psi(t)}\to 0 \quad \text{as}\quad t\to \infty.
$$

We recall some well-know relations for~$N$-functions. We refer to the book~\cite{KraRut61} 
for the notion and the basic properties of~$N$-functions. A real function~$\Psi:\setR^{\ge 0}\to \setR^{\ge 0}$ is called~$N$-functions if~$\Psi(0)=0$ and there exists the derivative~$\Psi'$ of~$\Psi$. The function~$\Psi'$ is right continuous, non-decreasing and satisfies~$\Psi'(0)=0$,~$\Psi'(t)>0$ for~$t>0$ and~$\lim_{t\to \infty}\Psi'(t)=\infty$.  The function~$\Psi$ satisfies~$\Delta_2$-condition, if there there exists~$c_1>0$ such that for all~$t\ge 0$ holds~$\Psi(t)\le \Psi(2t)$. By~$(\Psi')^{-1}:\setR^{\ge 0}\to \setR^{\ge 0}$ we denote the function
\begin{align*}
 (\Psi')^{-1}(t):=\sup\set{s\in\setR^{\ge 0}:\Psi'(s)\le t}.
\end{align*}
If~$\Psi'$ is strictly  increasing then~$(\Psi')^{-1}$ is the inverse of~$\Psi'$. The function
\begin{align*}
 \Psi^*(t):=\int_0^t (\Psi')^{-1}(s)\, dx
\end{align*}
is also an~$N$-function with
\begin{align*}
 (\Psi^*)'(t)=(\Psi')^{-1}(t),\quad\text{ for }t>0.
\end{align*}
There also holds
\begin{align*}
 \Psi^*(t)=\sup_{s\ge 0}(st-\Psi(s)),\quad (\Psi^*)^*=\Psi.
\end{align*}
The following variant of Young's inequality holds
\begin{align}
\label{eq:Young}
 ts\le \delta \Psi(t)+c_\delta\Psi^*(s).
\end{align}
The classical Young's inequality corresponds to the case~$\delta=1$ and~$c_\delta=1$. For all~$t\ge 0$ we have
\begin{align}
\label{eq:orl1}
 \Psi\bigg(\frac{\Psi^*(t)}{t}\bigg)\le\Psi^*(t)\le  \Psi\bigg(\frac{2\Psi^*(t)}{t}\bigg). 
\end{align}
Uniformly in~$t\ge 0$
\begin{align*}
 \Psi(t)\eqsim \Psi'(t)t,\quad \Psi^*(\Psi'(t))\eqsim \Psi(t)
\end{align*}
By~$L^\Psi$ and~$W^{1,\Psi}$ we denote classical Orlicz and Sobolev-Orlicz spaces:
\begin{align*}
 f\in L^\Psi \quad&\text{ iff }\int\Psi(\abs{f})\, dx<\infty,\\
 f\in W^{1,\Psi}\quad&\text{iff }f,\,\nabla f\in L^\Psi.
\end{align*}
The space $L^\Psi$ is normed with
\begin{align*}
 \norm{f}_{L^\Psi(\Omega)}:=\inf\set{\lambda>0:\, \int_\Omega \Psi\bigg(\frac{\abs{f}}{\lambda}\bigg)\, dx\le 1}
\end{align*}
and is complete, separable and reflexive space if both~$\Psi$ and~$\Psi^*$ satisfy~$\Delta_2$-condition.  

As above, $\Omega = (-1,1)^2$. 

We  obtain necessary and sufficient conditions for~$H^{1,\Phi(\cdot)}(\Omega)=W^{1,\Phi(\cdot)}(\Omega)$ in terms of $\phi$ and $\psi$ for the checkerboard configuration. Thus we give a complete description of the double-phase checkerboard structure with general Orlicz functions.

\begin{theorem}[Lavrentiev gap]  \label{thm3}
Let
 \begin{equation}\label{thm3_cond1}
   \int_0\phi\bigg(\frac 1 r\bigg)\, rdr<\infty
 \end{equation}
 and 
\begin{equation}\label{thm3_cond2}
   \int_0\psi^*\bigg(\frac 1 r\bigg)\, rdr<\infty.
 \end{equation}
 Then $H^{1,\Phi}(\Omega)\neq W^{1,\Phi}(\Omega)$ and~$H_0^{1,\Phi}(\Omega)\neq W_0^{1,\Phi}(\Omega)$.

 Moreover there exists~$b \in L^{\Phi^*(\cdot)}(\Omega)$ such that
  for $\mathcal{G}(\cdot)$ defined by
 $$
 \mathcal{G}(u) =\int_\Omega \Phi(x,\abs{\nabla u})\, dx+\int_\Omega b\cdot\nabla u \, dx
 $$
  there is Lavrentiev gap
 $$
 \inf \mathcal{G} (W_0^{1,\Phi(\cdot)} (\Omega)) <  \inf \mathcal{G} (H_0^{1,\Phi(\cdot)}(\Omega)). 
 $$
 \end{theorem}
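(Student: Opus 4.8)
\textbf{Proof plan for Theorem~\ref{thm3}.}
The plan is to mimic exactly the one–saddle–point construction used in the proof of Theorem~\ref{thm:density}, replacing the explicit Zygmund functions $\varphi,\psi$ by the abstract Orlicz functions $\phi,\psi$, and checking that the two integrability hypotheses \eqref{thm3_cond1}--\eqref{thm3_cond2} are precisely what is needed to run that argument. First I would verify the membership statements that make the separating functional available: $u_2\in W^{1,\Phi(\cdot)}(\Omega)$ and $b_2\in L^{\Phi^*(\cdot)}(\Omega)$. Since $\{\abs{\nabla u_2}\neq 0\}\subset\{a=0\}$, on that set $\Phi(x,\abs{\nabla u_2})=\phi(\abs{\nabla u_2})$, and by Proposition~\ref{pro:prop-uAb-d} the gradient $\abs{\nabla u_2}\lesssim \abs{x_2}^{-1}$ on an annular wedge; passing to polar coordinates gives
\begin{align*}
\int_\Omega \Phi(x,\abs{\nabla u_2})\, dx \lesssim \int_0 \phi\Big(\tfrac1r\Big)\, r\,dr <\infty
\end{align*}
by \eqref{thm3_cond1}. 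Similarly $\{\abs{b_2}\neq 0\}\subset\{a=1\}$, where $\Phi^*(x,\abs{b_2})\eqsim \psi^*(\abs{b_2})$ (using the formula $\Phi^*\eqsim\phi^*\wedge$ -- here simply $\psi^*$ on $\{a=1\}$ up to constants, since $\phi\le c_3\psi+c_4$ makes $\phi^*\gtrsim\psi^*$), and $\abs{b_2}\lesssim\abs{x_1}^{-1}$, so the finiteness of $\int_\Omega\Phi^*(x,\abs{b_2})\,dx$ follows from \eqref{thm3_cond2}. Here I would be slightly careful with the precise form of the conjugate of $\phi(t)+a(x)\psi(t)$; the cleanest route is to use $\Phi^*(x,s)\le \phi^*(s)$ (from $\Phi(x,t)\ge\phi(t)$) on all of $\Omega$ together with $\phi^*\le c\,\psi^*+c$ coming from $\phi\ge c^{-1}\psi-c$; alternatively, on $\{a=1\}$ one has $\Phi(x,t)\ge\psi(t)$ directly when, as will be the case, $\phi$ can be absorbed, or one simply estimates $\Phi^*(x,s)\le(\ \psi\ )^*(s)$ there. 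I expect a short lemma analogous to Lemma~\ref{lem:Lavr} to suffice.

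Next I would establish the analogue of Lemma~\ref{lemma:post}: any $u\in W^{1,\Phi(\cdot)}(\Omega)$ (resp.\ $W_0^{1,\Phi(\cdot)}$) that vanishes near the origin lies in $H^{1,\Phi(\cdot)}(\Omega)$ (resp.\ $H_0$). The proof is identical to the one in the excerpt: by a partition of unity the problem localizes to a square cut by a single coordinate line, across which $\Phi$ has the form $\phi(t)$ on one side and $\phi(t)+\psi(t)$ on the other; splitting $u$ into an even reflection $w\in W^{1,\psi(\cdot)}$ (approximated by standard mollification since $\psi$ satisfies $\Delta_2,\nabla_2$) plus a remainder $z$ supported in the $x$-independent half (approximated there in $W^{1,\phi(\cdot)}$) gives the desired smooth approximation. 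The only inputs are $\Delta_2$ for $\phi$ and $\psi$ and the standard density of smooth functions in ordinary Orlicz--Sobolev spaces, both available by hypothesis.

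With these two facts in hand, the Lavrentiev gap follows by reproducing verbatim the argument at the end of the proof of Theorem~\ref{thm:density}: set $b=b_2=\nabla^\perp v$, so that $\int_\Omega b\cdot\nabla\varphi\,dx=0$ for every $\varphi\in C_0^\infty(\Omega)$ and hence, by density, for every $\varphi\in H_0^{1,\Phi(\cdot)}(\Omega)$; thus $\mathcal G(w)\ge 0$ on $H_0^{1,\Phi(\cdot)}(\Omega)$. On the other hand, for $\eta\in C_0^\infty(\Omega)$ equal to $1$ near the origin, Lemma analogous to~\ref{lemma:post} together with $\int_{\partial\Omega}(b_2\cdot\nu)u_2\,dS=1$ and the solenoidality \eqref{solenoid} give $\int_\Omega b\cdot\nabla(\eta u_2)\,dx=-1$, so $\eta u_2\in W_0^{1,\Phi(\cdot)}(\Omega)$; and by the $\nabla_2$-condition \eqref{eq:phi-Nabla2} there is $s>1$ with $\int_\Omega\Phi(x,t\abs{\nabla u_2})\,dx\lesssim t^s\int_\Omega\Phi(x,\abs{\nabla u_2})\,dx$, which is $o(t)$ as $t\to0$, whence
\begin{align*}
\mathcal G(tu_2)=\int_\Omega\Phi(x,t\abs{\nabla u_2})\,dx-t<0
\end{align*}
for small $t>0$. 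Comparing with $\inf\mathcal G(H_0^{1,\Phi(\cdot)})\ge 0$ yields the strict inequality, and the non-density statements $H^{1,\Phi}\ne W^{1,\Phi}$, $H_0^{1,\Phi}\ne W_0^{1,\Phi}$ follow since $\eta u_2\in W_0^{1,\Phi(\cdot)}\setminus H_0^{1,\Phi(\cdot)}$ (if it were in $H_0$, the separating functional would vanish on it, contradicting the value $-1$). The main obstacle I anticipate is purely bookkeeping: pinning down the right pointwise comparison for $\Phi^*(x,\cdot)$ on $\{a=1\}$ versus $\psi^*$, and making sure the hypotheses $\phi\le c_3\psi+c_4$ and $\phi/\psi\to0$ are used exactly where needed (the latter is what forces $\Phi^*\eqsim\psi^*$ up to harmless lower-order terms on $\{a=1\}$, so that \eqref{thm3_cond2} really controls $b_2$); none of this is deep, but it must be done cleanly. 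Everything else transfers word for word from the $\Phi_{\alpha,\beta}$ case, with \eqref{thm3_cond1} playing the role of ``$\beta>1$'' and \eqref{thm3_cond2} the role of ``$\alpha>1$''.
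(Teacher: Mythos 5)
Your proposal is correct and follows essentially the same route the paper takes: verify $\nabla u_2\in L^{\Phi(\cdot)}(\Omega)$ from \eqref{thm3_cond1} and $b_2\in L^{\Phi^*(\cdot)}(\Omega)$ from \eqref{thm3_cond2} (using the support constraints $\{\abs{\nabla u_2}\neq 0\}\subset\{a=0\}$, $\{\abs{b_2}\neq 0\}\subset\{a=1\}$ and the polar-coordinate reduction), then run the one--saddle--point separating-functional machinery of \cite{BalDieSur20} (reproduced in the paper's proof of Theorem~\ref{thm:density}) together with the analogue of Lemma~\ref{lemma:post}, which the paper records as Lemma~\ref{lemma:post1}. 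The only slip is in your ``cleanest route'' sketch for $\Phi^*$: from $\phi\le c_3\psi+c_4$ one gets $\phi^*\gtrsim\psi^*$, not $\phi^*\lesssim\psi^*$, so that chain does not close; but the alternative you give (on $\{a=1\}$, $\Phi(x,t)\ge\psi(t)$ hence $\Phi^*(x,s)\le\psi^*(s)$) is correct and suffices, so the argument goes through.
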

 \begin{proof}
 A direct check shows that \eqref{thm3_cond1} implies $\nabla u_2 \in L^{\Phi(\cdot)}(\Omega)$ and \eqref{thm3_cond2} implies $b_2 \in  L^{\Phi^*(\cdot)}(\Omega)$. The proof is concluded by application of~\cite[Theorems 26, 28]{BalDieSur20}.
 \end{proof}

If either of the conditions of Theorem~\ref{thm3} is violated, then there is no Lavrentiev gap.

\begin{theorem}[No gap]
\label{thm:density1}
The equality $H_0^{1,\Phi(\cdot)}=W_0^{1,\Phi(\cdot)}$ is valid in the following cases:
\begin{enumerate}
 \item \label{case1} 
 \begin{equation}\label{thm2_cond1}
\int_{0}\psi^*\bigg(\frac 1 r\bigg) r\, dr=+\infty.
 \end{equation}

 \item\label{case2}  
 \begin{equation}\label{thm2_cond2}
 \int_{0}\phi\bigg(\frac 1 r\bigg) r\, dr =+\infty. % \int_{0}\psi^*\bigg(\frac 1 r\bigg) r\, dr< \infty, \quad
 \end{equation}
\end{enumerate}
\end{theorem}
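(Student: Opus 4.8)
\emph{Overall plan.} Theorem~\ref{thm:density1} is the ``no gap'' counterpart of Theorem~\ref{thm:density}, and I would prove it by transcribing the arguments given there for $\Phi_{\alpha,\beta}$: condition \eqref{thm2_cond1} takes over the role of ``$\alpha\le1$'', and condition \eqref{thm2_cond2} the role of ``$\beta\le1$''. In both cases one reduces first, by truncation and the absolute continuity of the Luxemburg norm (available because $\Phi$ and $\Phi^*$ satisfy $\Delta_2$), to showing that a bounded $u\in W_0^{1,\Phi(\cdot)}(\Omega)$ belongs to $H_0^{1,\Phi(\cdot)}(\Omega)$.

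\emph{Case~\ref{case1}.} Since $\psi^*(1/r)\,r\eqsim(\psi')^{-1}(1/r)$, condition \eqref{thm2_cond1} is equivalent to $\int_0(\psi')^{-1}(1/r)\,dr=+\infty$, i.e.\ the origin has zero $W^{1,\psi}$-capacity in the plane; this allows one to replace Lemma~\ref{lemma:remove}. For small $\epsilon$, choose $\rho_\epsilon\in(0,\epsilon)$ so small that $Z_\epsilon:=\int_{\rho_\epsilon}^{\epsilon}(\psi')^{-1}(1/t)\,dt\to\infty$ as $\epsilon\to0$, and let $\eta_\epsilon$ be the Lipschitz radial function vanishing on $B_{\rho_\epsilon}$, equal to $1$ outside $B_\epsilon$, with $\abs{\eta_\epsilon'(r)}=(\psi')^{-1}(1/r)/Z_\epsilon$ on $(\rho_\epsilon,\epsilon)$. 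Using $\psi(t)\eqsim t\,\psi'(t)$ and the $\nabla_2$-condition for $\psi$ (which gives $\psi(t/Z_\epsilon)\le c\,Z_\epsilon^{-s}\psi(t)$ for a fixed $s>1$, cf.\ \eqref{eq:phi-Nabla2}) one obtains $\int_\Omega\psi(\abs{\nabla\eta_\epsilon})\,dx\lesssim Z_\epsilon^{\,1-s}\to0$, hence $\int_\Omega\phi(\abs{\nabla\eta_\epsilon})\,dx\le c_3\int_\Omega\psi(\abs{\nabla\eta_\epsilon})\,dx+c_4\abs{B_\epsilon}\to0$ and therefore $\int_\Omega\Phi(x,\abs{\nabla\eta_\epsilon})\,dx\to0$. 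For bounded $u$ one sets $u_\epsilon=u\eta_\epsilon$; then $\nabla(u-u_\epsilon)=-u\nabla\eta_\epsilon+(1-\eta_\epsilon)\nabla u$, and the $\Delta_2$-condition together with dominated convergence give $\int_\Omega\Phi(x,\abs{\nabla(u-u_\epsilon)})\,dx\to0$, so $u_\epsilon\to u$ in $W^{1,\Phi(\cdot)}(\Omega)$; since $u_\epsilon$ vanishes near the origin, Lemma~\ref{lemma:post} yields $u_\epsilon\in H_0^{1,\Phi(\cdot)}(\Omega)$, hence $u\in H_0^{1,\Phi(\cdot)}(\Omega)$.

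\emph{Case~\ref{case2}.} If \eqref{thm2_cond1} also holds we are in Case~\ref{case1}, so assume $\int_0\psi^*(1/r)\,r\,dr<\infty$, i.e.\ the origin has positive $W^{1,\psi}$-capacity. Then the Riesz-potential and Hölder computation of Lemma~\ref{lem:embedding} applies with the general $\psi$ in place of $t^2\log^\alpha(e+t)$ --- note $a\equiv1$ on $C_\pm$, so $\nabla u\in L^\psi(C_\pm)$ for $u\in W^{1,\Phi(\cdot)}(\Omega)$ --- and gives that $u$ is continuous up to the origin along $C_\pm$, with limit values $u_\pm$ and a modulus of continuity $\omega(r)\to0$. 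Next I would run the argument of Lemma~\ref{lem:lim_value_coin}: if $u_+\neq u_-$ with $\abs{u_+-u_-}=1$, then continuity of $u$ on $\overline{C_\pm}$ and absolute continuity of the slices $x_2\mapsto u(s,x_2)$ force $\int_{-s}^s\abs{\partial_{x_2}u(s,x_2)}\,dx_2\ge\tfrac12$ for all small $s$; hence $\int_{\Omega_h}\abs{\nabla u}\,dx\gtrsim h$ on the dyadic sector $\Omega_h=\set{h/2<x_1<h,\ \abs{x_2}<x_1}$, and Jensen's inequality for the convex function $\phi$ gives $\int_{\Omega_h}\phi(\abs{\nabla u})\,dx\gtrsim h^2\phi(1/h)$. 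Summing over $h=2^{-j}$ and using $\Phi\ge\phi$ together with $\Delta_2$, one gets $\int_\Omega\Phi(x,\abs{\nabla u})\,dx\gtrsim\sum_j2^{-2j}\phi(2^j)\eqsim\int_0\phi(1/r)\,r\,dr=+\infty$, contradicting $u\in W^{1,\Phi(\cdot)}(\Omega)$. Therefore $u_+=u_-$.

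\emph{Conclusion of Case~\ref{case2}; the main obstacle.} It remains to deduce $u\in H_0^{1,\Phi(\cdot)}(\Omega)$ from $u_+=u_-$, and this is the step I expect to be the hard one. After subtracting $u_+\,\eta$ with $\eta\in C_0^\infty(\Omega)$ equal to $1$ near the origin (which preserves the zero boundary data) one may assume $u_+=u_-=0$. If $\phi$ has positive $W^{1,\phi}$-capacity at the origin, i.e.\ $\int_0\phi^*(1/r)\,r\,dr<\infty$, then $u\in C(\overline\Omega)$ and one argues exactly as in the ``$\beta<-1$'' branch of the proof of Corollary~\ref{corr:ge1}. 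Otherwise one again takes $u_\epsilon=u\eta_\epsilon$, but now $\eta_\epsilon$ has to be a double-logarithmic-type cutoff, supported in a shrinking ball and tuned simultaneously to $\phi$ and to the modulus $\omega$: off $C_\pm$ (where $a\equiv0$ and $u$ is only bounded) the profile must give $\int_{\Omega\setminus C_\pm}\phi(\abs{\nabla\eta_\epsilon})\,dx\to0$, which is possible by the zero-$\phi$-capacity construction of Case~\ref{case1} applied to $\phi$, while on $C_\pm$ the bound $\abs{u}\le\omega(\abs{x})$ together with the $\nabla_2$-condition for $\psi$ must force $\int_{C_\pm}\psi(\omega(\abs{x})\abs{\nabla\eta_\epsilon})\,dx\to0$, even though the unweighted $\psi$-energy of $\eta_\epsilon$ is necessarily unbounded (the origin having positive $\psi$-capacity). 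Granting such an $\eta_\epsilon$, one has $\int_\Omega\Phi(x,\abs{\nabla(u-u_\epsilon)})\,dx\to0$ (the remaining term $\int_\Omega\Phi(x,(1-\eta_\epsilon)\abs{\nabla u})\,dx$ going to $0$ by dominated convergence), $u_\epsilon\in H_0^{1,\Phi(\cdot)}(\Omega)$ by Lemma~\ref{lemma:post}, and so $u\in H_0^{1,\Phi(\cdot)}(\Omega)$. The delicate point is precisely the construction of this cutoff and the estimate of the weighted Orlicz integral over $C_\pm$ --- the abstract analogue of the explicit double-logarithmic computations in Lemma~\ref{lemma:remove} and Corollary~\ref{corr:ge1} --- while everything else is a routine transcription of arguments already carried out for $\Phi_{\alpha,\beta}$.
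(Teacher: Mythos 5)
Your Case~\ref{case1} is essentially the paper's argument: you replace the paper's $\psi$-harmonic radial profile (Lemma~\ref{lemma:remove1}, where $\eta'(r)=(\psi^*)'(c_{r_1,r_2}/r)$ with the constant tuned via $\int_{r_1}^{r_2}(\psi^*)'(c/\rho)\,d\rho=1$) by the normalized profile $\eta'(r)=(\psi')^{-1}(1/r)/Z_\epsilon$ and control the modular via the $\nabla_2$-exponent, obtaining $\int\psi(\abs{\nabla\eta_\epsilon})\lesssim Z_\epsilon^{1-s}$. Both constructions hinge on the same divergence $\int_0(\psi')^{-1}(1/r)\,dr=\infty$ (equivalently \eqref{thm2_cond1}), and the truncation/Lebesgue bookkeeping is identical; this part is fine.

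Case~\ref{case2}, however, has a genuine gap, and it is the one you yourself flag. You reduce (in the subcase $\int_0\psi^*(1/r)\,r\,dr<\infty$, $\int_0\phi^*(1/r)\,r\,dr=\infty$) to producing a cutoff $\eta_\epsilon$ that has vanishing $\phi$-energy off $C_\pm$ while \emph{simultaneously} satisfying $\int_{C_\pm}\psi\big(\omega(\abs{x})\abs{\nabla\eta_\epsilon}\big)\,dx\to0$, and you write ``Granting such an $\eta_\epsilon$\dots''. That is precisely the step that does not follow by ``routine transcription'': in the concrete $\Phi_{\alpha,\beta}$ case the compensation between the modulus $\omega(r)\eqsim\log^{(1-\alpha)/2}(1/r)$ and the double-logarithmic profile is an explicit cancellation of powers of $\log(1/r)$ (compare the computation of $I_\epsilon$ in Corollary~\ref{corr:ge1}), and there is no abstract reason it should persist for general Orlicz $\phi,\psi$ obeying only \eqref{thm2_cond2}. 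The paper avoids the issue entirely: Case~\ref{case2} is proved by the duality Lemma~\ref{lem:dual}, observing that for $\Phi^*(x,t)$ the roles of $\phi$ and $\psi$ are swapped ($\varphi^*$ acts as the ``$\psi$'' for $\Phi^*$, and $\psi^*$ as the ``$\phi$''), so \eqref{thm2_cond2} is exactly the Case~\ref{case1} hypothesis for $\Phi^*$; Corollary~\ref{corr:remove1} applied to $\Phi^*$ then gives $H^{1,\Phi^*(\cdot)}=W^{1,\Phi^*(\cdot)}$, and Lemma~\ref{lem:dual} transfers this to $H_0^{1,\Phi(\cdot)}=W_0^{1,\Phi(\cdot)}$. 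That duality reduction is the key idea you are missing; without it your argument stops at the construction of the ``two-phase'' cutoff and so the proof of Case~\ref{case2} is incomplete.
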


We start with a lemma that is a generalization of Lemma~\ref{lemma:post}.

\begin{lemma}\label{lemma:post1}
Let $u \in W^{1,\Phi(\cdot)}(\Omega)$ and $u=0$ in a neighbourhood of the origin. Then $u\in H^{1,\Phi(\cdot)}(\Omega)$. If $u\in W_0^{1,\Phi(\cdot)}(\Omega)$ and $u=0$ in a neighbourhood of the origin then $u\in H_0^{1,\Phi(\cdot)}(\Omega)$.
\end{lemma}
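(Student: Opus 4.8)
The plan is to adapt the argument of Lemma~\ref{lemma:post} to the general Orlicz setting, where the only structural facts about $\phi$ and $\psi$ we may invoke are the $\Delta_2$ and $\nabla_2$ conditions and the growth bounds $|t|^{p_-}\le\phi(t),\psi(t)\le c_1|t|^{p_+}+c_2$ together with $\phi\le c_3\psi+c_4$. As before, by a partition of unity subordinate to a covering of the support of $1-u$ (which is bounded away from the origin), followed by rotation and dilation, the statement reduces to the model situation: $\tilde\Phi(x,t)=\phi(t)+\tilde a(x)\psi(t)$ with $\tilde a=0$ on $\{x_2>0\}$ and $\tilde a=1$ on $\{x_2<0\}$, on the square $Q=\{|x_1|+|x_2|<1\}$, and we must show $W^{1,\tilde\Phi(\cdot)}(Q)=H^{1,\tilde\Phi(\cdot)}(Q)$ and likewise for the spaces with zero boundary values. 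Here the interface $\{x_2=0\}$ replaces the whole diagonal structure, so only a single flat jump of the weight survives.

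Next I would run the even-reflection trick verbatim. Given $v\in W^{1,\tilde\Phi(\cdot)}(Q)$, let $w$ be the even extension across $\{x_2=0\}$ of the restriction $v|_{Q_-}$, and set $z=v-w$, so $z$ vanishes on $Q_-$ and has zero trace on $\{x_2=0\}$. On $Q_+$ the integrand degenerates to $\tilde\Phi(x,t)=\phi(t)$, which is $x$-independent, so standard mollification produces $z_\epsilon\in C^\infty(\overline{Q_+})$ with $z_\epsilon=0$ near $\{x_2=0\}$ and $z_\epsilon\to z$ in $W^{1,\phi(\cdot)}(Q_+)$; extending by zero, $z_\epsilon\to z$ in $W^{1,\tilde\Phi(\cdot)}(Q)$. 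The function $w$ lies in $W^{1,\psi(\cdot)}(Q)$: indeed $\nabla w\in L^{\psi(\cdot)}(Q)$ because $v|_{Q_-}\in W^{1,\tilde\Phi(\cdot)}(Q_-)=W^{1,\phi(\cdot)+\psi(\cdot)}(Q_-)\subset W^{1,\psi(\cdot)}(Q_-)$ (using $\psi\le\phi+\psi$) and even reflection preserves this. Since $\psi$ and $\psi^*$ satisfy $\Delta_2$, smooth functions are dense in $W^{1,\psi(\cdot)}(Q)$, so there is $w_\epsilon\in C^\infty(\overline Q)$ with $w_\epsilon\to w$ in $W^{1,\psi(\cdot)}(Q)$, hence also in $W^{1,\tilde\Phi(\cdot)}(Q)$ because $\tilde\Phi(x,t)\le\phi(t)+\psi(t)\lesssim\psi(t)+c$. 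Setting $u_\epsilon=w_\epsilon+z_\epsilon\in C^\infty(\overline Q)$ gives $u_\epsilon\to v$ in $W^{1,\tilde\Phi(\cdot)}(Q)$, which proves $W=H$. For the zero–boundary–value version one takes $v\in W_0^{1,\tilde\Phi(\cdot)}(Q)$, notes that then $z\in W_0^{1,\phi(\cdot)}(Q_+)$ and $w\in W_0^{1,\psi(\cdot)}(Q)$, and chooses the approximating sequences $z_\epsilon$ from $C_0^\infty(Q_+)$ and $w_\epsilon$ from $C_0^\infty(Q)$.

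The step that requires the most care — and the only place where the proof genuinely differs from Lemma~\ref{lemma:post} — is the justification that convergence in $W^{1,\phi(\cdot)}$ (resp.\ $W^{1,\psi(\cdot)}$) implies convergence in $W^{1,\tilde\Phi(\cdot)}$, and that $w$ genuinely belongs to $W^{1,\psi(\cdot)}(Q)$ after the reflection. Both reduce to the elementary norm comparisons $\|f\|_{L^{\phi(\cdot)}},\|f\|_{L^{\psi(\cdot)}}\lesssim\|f\|_{L^{\tilde\Phi(\cdot)}}$ and $\|f\|_{L^{\tilde\Phi(\cdot)}}\lesssim\|f\|_{L^{\phi(\cdot)}}+\|f\|_{L^{\psi(\cdot)}}\lesssim\|f\|_{L^{\psi(\cdot)}}+C$ on the bounded domain $Q$, which follow from $\phi,\psi\le\phi+\psi=\tilde\Phi$ on $\{x_2<0\}$, from $\tilde\Phi\le\phi+\psi$ everywhere, and from $\phi\le c_3\psi+c_4$ plus $\Delta_2$; one must also check that the even reflection is a bounded operator $W^{1,\psi(\cdot)}(Q_-)\to W^{1,\psi(\cdot)}(Q)$, which is immediate since reflection is an isometry on $L^{\psi(\cdot)}$ and maps weak gradients to weak gradients across the flat interface. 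I do not expect any serious obstacle beyond keeping these embeddings straight; the density of smooth functions in the single-Orlicz spaces $W^{1,\phi(\cdot)}$ and $W^{1,\psi(\cdot)}$ is classical under $\Delta_2$ and $\nabla_2$.
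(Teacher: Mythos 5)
Your proof matches the paper's: the paper simply declares the proof of Lemma~\ref{lemma:post1} to be a verbatim repetition of that of Lemma~\ref{lemma:post} since the argument depends only on the structure of the weight $a$, and your reflection-and-split construction ($v=w+z$ across the flat interface, mollifying $z$ in the pure-$\phi$ region and $w$ in $W^{1,\psi(\cdot)}$) together with the norm comparisons $\phi,\psi\lesssim\tilde\Phi\lesssim\psi+c$ is exactly that adaptation. (One cosmetic slip: the covering should be of the support of $u$, not of $1-u$.)
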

The statement is verbatim repetition, as well as the proof, which uses only the structure of the weight $a$.

The next result is a generalization of Lemma~\ref{lemma:remove}.
\begin{lemma}\label{lemma:remove1}
For  the integrand~$\Phi(x,t)$  defined in~\eqref{eq:integrand} and~$d=2$ let 
\begin{equation}\label{gen_cond1}
\int_{0}\psi^*\bigg(\frac 1 r\bigg) r\, dr=+\infty.
\end{equation}
 Then there exists a sequence of functions $\eta_\epsilon\in C^\infty (\bar{\Omega})$, $\epsilon\to 0$, such that $\eta_\epsilon =1$ outside $B_\epsilon(0)$, $\eta_\epsilon =0$ in a neighbourhood of the origin, $0\leq \eta_\epsilon \leq 1$, and $$\int_\Omega \Phi(x,\abs{\nabla \eta_\epsilon})\, dx \to 0 \text{  as }\epsilon\to 0.$$
\end{lemma}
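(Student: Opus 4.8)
The plan is to reuse the profile from Lemma~\ref{lemma:remove}, but tuned to the general Orlicz function $\psi$ rather than to $\psi(t) = t^2\log^\alpha(e+t)$. Since $\{\abs{\nabla\eta_\epsilon}\neq 0\}\subset\{a\le 1\}$ and $\phi\le c_3\psi+c_4$, it suffices to make $\int_\Omega\psi(\abs{\nabla\eta_\epsilon})\,dx\to 0$; the $\phi$-part and the constants are then controlled automatically (and near the origin $\abs{\nabla\eta_\epsilon}$ is large, so the additive constants are harmless after a standard cut). So I reduce to constructing a radial cutoff $\eta_\epsilon(r)$, equal to $1$ for $r\ge\epsilon$, equal to $0$ for $r\le\rho_\epsilon$ (with $\rho_\epsilon\to 0$ chosen below), $0\le\eta_\epsilon\le 1$, such that $\int_{\rho_\epsilon}^{\epsilon}\psi(\abs{\eta_\epsilon'(r)})\,r\,dr\to 0$ in $d=2$.

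The key step is the choice of $\eta_\epsilon'$. I would take $\abs{\eta_\epsilon'(r)} = c_\epsilon\,\frac{1}{r}\,\psi^{-1}\!\big(\tfrac{1}{r^2}\big)^{-1}$ on $(\rho_\epsilon,\epsilon)$ — heuristically, $\psi(\abs{\eta_\epsilon'})\,r\sim c_\epsilon^2/(r\log\text{-type factor})$ so that the modular integral is comparable to $c_\epsilon^2\int_{\rho_\epsilon}^\epsilon \psi\big(\tfrac1r\psi^{-1}(r^{-2})^{-1}\big)r\,dr$, and the total variation $\int_{\rho_\epsilon}^\epsilon\abs{\eta_\epsilon'}\,dr=1$ is arranged by choosing $c_\epsilon$ as the reciprocal of that variation; the divergence hypothesis \eqref{gen_cond1}, rewritten via the substitution $s=1/r$ and the relations \eqref{eq:orl1} between $\psi$, $\psi^*$ and $(\psi')^{-1}$, guarantees that the normalizing constant $c_\epsilon\to 0$, so the $\psi$-modular, being $\lesssim c_\epsilon$ (or $c_\epsilon^2$) times a bounded quantity, tends to zero. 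Concretely I expect the estimate to collapse to the form $\int_\Omega\psi(\abs{\nabla\eta_\epsilon})\,dx\lesssim c_\epsilon\to 0$, exactly as in the special case where $c_\epsilon\sim 1/\log(1/\epsilon)$. I would then fix $\rho_\epsilon$ small enough (e.g. determined implicitly by requiring $\int_{\rho_\epsilon}^\epsilon\abs{\eta_\epsilon'}\,dr=1$), which is possible precisely because the divergence of \eqref{gen_cond1} forces this integral to reach $1$ as $\rho_\epsilon\to 0$.

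The main obstacle is making the interplay between the abstract condition \eqref{gen_cond1} and the profile rigorous without assuming extra regularity of $\psi$: one must pass from $\int_0\psi^*(1/r)\,r\,dr=\infty$ to a statement about $\int\psi\big(\tfrac1r(\psi')^{-1}(r^{-2})^{-1}\big)r\,dr$ or about $\int \tfrac{(\psi')^{-1}(s)}{s}\,ds$, using only $\Delta_2/\nabla_2$, \eqref{eq:orl1}, and $\Psi^*(t)\eqsim t(\psi')^{-1}(t)$ — monotonicity and these two-sided comparisons should suffice, but care is needed since $\psi^{-1}$ and $(\psi')^{-1}$ differ and only agree up to constants under $\Delta_2$. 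A secondary technical point is ensuring $\eta_\epsilon\in C^\infty(\bar\Omega)$ rather than merely Lipschitz: this is handled by first building a piecewise-smooth $\eta_\epsilon$ with the modular bound and then mollifying at a scale much finer than $\rho_\epsilon$, which changes the modular by an arbitrarily small amount by $\Delta_2$ and equiintegrability, exactly as one does in Lemma~\ref{lemma:remove}.
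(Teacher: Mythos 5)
Your overall plan---reduce to a radial cutoff, normalize the total variation to $1$ by a constant $c_\epsilon$, and argue $c_\epsilon\to 0$ from the divergence hypothesis \eqref{gen_cond1}---is the right shape of argument, and matches the paper's. However, the concrete profile you choose is wrong, and the error is not repairable by constants. Your ansatz $\abs{\eta_\epsilon'(r)} = c_\epsilon\,\big(r\,\psi^{-1}(1/r^2)\big)^{-1}$ degenerates already for $\psi(t)=t^2$: then $\psi^{-1}(1/r^2)=1/r$ and $\abs{\eta_\epsilon'}\equiv c_\epsilon$, a \emph{constant}, so the total variation $\int_{\rho_\epsilon}^{\epsilon}\abs{\eta_\epsilon'}\,dr=c_\epsilon(\epsilon-\rho_\epsilon)$ stays bounded as $\rho_\epsilon\to 0$. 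In particular the normalizing constant does not tend to zero (it tends to $1/\epsilon$), and the modular $\int\psi(\abs{\eta_\epsilon'})\,r\,dr\to\epsilon/2$ rather than $0$---even though \eqref{gen_cond1} holds in this case, since $\int_0 \psi^*(1/r)\,r\,dr\eqsim\int_0 dr/r=\infty$. The underlying reason is that the total-variation integral attached to your profile, after $s=1/r^2$, is $\int^\infty \tfrac{ds}{s\,\psi^{-1}(s)}$, which is a genuinely different integral from \eqref{gen_cond1} and can converge while \eqref{gen_cond1} diverges. So the claim ``divergence of \eqref{gen_cond1} forces $c_\epsilon\to 0$'' does not hold for your choice.

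The paper avoids guessing by deriving the profile from the variational problem $\int_{r_1<r<r_2}\psi(\abs{\nabla\eta})\,dx\to\min$ with $\eta(r_1)=0$, $\eta(r_2)=1$. Radial symmetry and the Euler--Lagrange equation give $\big(r\psi'(\eta')\big)'=0$, hence $\eta'(r)=(\psi')^{-1}(c/r)=(\psi^*)'(c/r)$ (note: it is $(\psi')^{-1}$, not $\psi^{-1}$, that appears). With this choice the total-variation normalization is $\int_{r_1}^{r_2}(\psi^*)'(c/\rho)\,d\rho=1$, and the identity $\Psi^*(t)\eqsim t(\Psi^*)'(t)$ shows that $\int_0(\psi^*)'(1/\rho)\,d\rho=\infty$ is \emph{equivalent} to \eqref{gen_cond1}; this is exactly what drives $c_{r_1,r_2}\to 0$. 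The modular bound then follows \emph{without} invoking $\nabla_2$-scaling in $c$: one uses $\psi\big((\psi^*)'(c/r)\big)\le\psi^*(c/r)$ and $\psi^*(c/r)\lesssim (c/r)(\psi^*)'(c/r)$ to get $\int_\Omega\psi(\abs{\nabla\eta})\,dx\lesssim c\int_{r_1}^{r_2}(\psi^*)'(c/\rho)\,d\rho=c$, which is the clean linear-in-$c$ estimate you were aiming for. Your secondary remark about mollification and about $\phi\lesssim\psi$ on the support of $\nabla\eta_\epsilon$ is fine; the core issue is the profile.
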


\begin{proof}
Set
 \begin{align}
 \label{eq:eta}
  \eta_{r_1,r_2}(r):=\begin{cases}
            0,&\quad \text{ if }r<r_1,\\
            \int_{r_1}^{r}(\psi^*)'\bigg(\frac {c_{r_1,r_2}}{ \rho} \bigg)\, d\rho,&\quad \text{ if }r\in(r_1,r_2),\\
            1,&\quad \text{ if }r>r_2,
           \end{cases}
 \end{align}
where the constant $c_{r_1,r_2}$ comes from
\begin{align}
\label{eq:intpsi}
 \int_{r_1}^{r_2}(\psi^*)'\bigg(\frac {c_{r_1,r_2}} {\rho}\bigg)\,d\rho=1.
\end{align}
The structure of  the function~$\eta_{r_1,r_2}$ comes from the problem:
 \begin{align*}
  \int_{r_1<r<r_2}& \psi(\abs{\nabla \eta})\, dx\to \min,\\
  \eta|_{r_1}&=0,\\
  \eta|_{r_2}&=1.
 \end{align*}
The corresponding Euler-Lagrange equation has the form
\begin{align*}
\divergence\big(\psi'(\abs{\nabla \eta})\cdot\frac{\nabla \eta}{\abs{\nabla \eta}}\big) = 0, 
\end{align*}
so~$\eta$ should be~$\psi$-harmonic.  From the radial symmetry, $\eta=\eta(r)$, $\eta'\geq 0$,  so 
$$
\left( r\psi' (\eta')\right)' =0. 
$$
Thus 
$$
\psi' (\eta') = \frac{c}{r}, \quad c =c_{r_1,r_2}=\mathrm{const},
$$
and 
$$
\eta' (r)= (\psi')^{-1} (c/r) = (\psi^*)' (c/r).
$$
To satisfy the boundary conditions we have to require that
\begin{align*}
 \int_{r_1}^{r_2} (\psi^*)'\bigg(\frac c \rho\bigg)\, d\rho=1.
\end{align*}
So the required function $\eta=\eta_{r_1,r_2}$ is given by~\eqref{eq:eta} and~\eqref{eq:intpsi}.

It remains to show  that~$\nabla \eta_{r_1,r_2}\to 0$ in~$L^{\psi(\cdot)}(\Omega)$ for some  $r_1,r_2\to 0$.

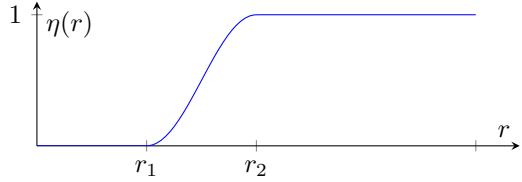
\begin{figure}[!ht]
  \centering
  \begin{tikzpicture}[
    declare function={
      func(\x)= (\x<=1/4) * (0) +
      and(\x>1/4, \x<1/2) * (1/2+1/2*sin(deg((4*\x+3/6)*3.1415))) +
      and(\x>1/2,  \x<=1) * (1);
              }
              ]
              \begin{axis}[width=8cm,height=3.5cm,
      axis x line=middle, axis y line=middle,
      ymin=0, ymax=1.1, ytick={0,1}, ylabel=$\eta(r)$,
      xmin=0, xmax=1.1, xtick={0,0.25,0.5,1}, xticklabels={$0$,$r_1$,$r_2$},xlabel=$r$,
      ]
      \addplot[blue, domain=0:1, samples=100]{func(x)};
      \draw[blue](0,0)--(0.5,0);
    \end{axis}
  \end{tikzpicture}
  \caption{The function~$\eta$}
\end{figure}%
We have
\begin{align*}
 \int_\Omega \psi(\abs{\nabla \eta_{r_1,r_2}})\, dx &= \int_{r_1}^{r_2} \psi\left((\psi^*)' \left(\frac{c_{r_1,r_2}}{\rho}  \right) \right)  \,\rho d\rho    \lesssim\int_{r_1}^{r_2}\psi^*\bigg( \frac{c_{r_1,r_2}}{\rho}\bigg)\, \rho d\rho\\
 &\lesssim \int_{r_1}^{r_2}(\psi^*)'\bigg( \frac{c_{r_1,r_2}}{\rho}\bigg) \frac{c_{r_1,r_2}}{\rho} \rho\, d\rho =c_{r_1,r_2}  \int_{r_1}^{r_2} (\psi^*)'\bigg( \frac{c_{r_1,r_2}}{\rho}\bigg)\,d\rho,
\end{align*}
where~$c_{r_1,r_2}$ is defined from~\eqref{eq:intpsi}.
Thus, 
$$
 \int_\Omega \psi(\abs{\nabla \eta_{r_1,r_2}})\, dx \leq c_{r_1,r_2}.
$$
Note that 
\begin{align}
\label{eq:div}
\int_0  \psi^*\bigg( \frac{1}{\rho}\bigg)\,\rho d\rho =\infty  \iff \int_0  (\psi^*)'\bigg( \frac{1}{\rho}\bigg)\, d\rho =\infty
\end{align}
and for any~$r_1, r_2$ the function~$\int_{r_1}^{r_2}  (\psi^*)'\bigg( \frac{c}{\rho}\bigg)\, d\rho$ is increasing with respect to~$c$.  So, if we find~$\delta$ such that
\begin{align*}
 \int_{r_1}^{r_2}  (\psi^*)'\bigg( \frac{\delta}{\rho}\bigg)\, d\rho\ge 1 ,
\end{align*}
then~$c_{r_1,r_2}\le \delta$.

For arbitrary small~$r_2$ and~$\delta$, due to~\eqref{eq:div}, we can find~$r_1<r_2$,~$r_1=r_1(r_2,\delta)$ such that
\begin{align*}
\int_{r_1}^{r_2}  (\psi^*)'\bigg( \frac{\delta}{\rho}\bigg)\, d\rho =\delta \int_{\frac{r_1}{\delta}}^{\frac{r_2}{\delta}}  (\psi^*)'\bigg( \frac{1}{\rho}\bigg)\, d\rho\ge 1. 
\end{align*}
So~$c_{r_1,r_2}\le \delta$ and thus $\int_0^1 \psi(\abs{\nabla \eta_{r_1,r_2}})\,dx \leq \delta$. The proof of the lemma is concluded by taking $\eta_\epsilon = \eta_{r_1(\epsilon,\epsilon), \epsilon}$.
\end{proof}

Recall that $\Phi^* (x,\cdot)$ is the conjugate functions of $\Phi(x,\cdot)$.
\begin{lemma}[On duality]
\label{lem:dual}
If $W^{1,\Phi^*(\cdot)}(\Omega) = H^{1, \Phi^*(\cdot)}(\Omega)$ then $W_0^{1,\Phi(\cdot)}(\Omega) = H_0^{1, \Phi(\cdot)}(\Omega)$.
\end{lemma}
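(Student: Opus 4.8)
The plan is to argue by contradiction: from $H_0^{1,\Phi(\cdot)}(\Omega)\neq W_0^{1,\Phi(\cdot)}(\Omega)$ I will produce a \emph{separating functional} and then, using the hypothesis $W^{1,\Phi^*(\cdot)}(\Omega)=H^{1,\Phi^*(\cdot)}(\Omega)$, show that it must be trivial. Since $\norm{w}_{W_0^{1,\Phi(\cdot)}(\Omega)}=\norm{\nabla w}_{L^{\Phi(\cdot)}(\Omega)}$ and both $\Phi$ and $\Phi^*$ satisfy $\Delta_2$, the map $w\mapsto \nabla w$ embeds $W_0^{1,\Phi(\cdot)}(\Omega)$ isometrically into $L^{\Phi(\cdot)}(\Omega;\setR^2)$, whose dual is realised through $L^{\Phi^*(\cdot)}(\Omega;\setR^2)$. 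As $H_0^{1,\Phi(\cdot)}(\Omega)$ is closed in $W_0^{1,\Phi(\cdot)}(\Omega)$, if it were a proper subspace then Hahn--Banach, extension to $L^{\Phi(\cdot)}(\Omega;\setR^2)$ and duality would furnish $b\in L^{\Phi^*(\cdot)}(\Omega;\setR^2)$ such that $w\mapsto \int_\Omega b\cdot\nabla w\,dx$ is nontrivial on $W_0^{1,\Phi(\cdot)}(\Omega)$ but vanishes on $H_0^{1,\Phi(\cdot)}(\Omega)$. Testing against $C_0^\infty(\Omega)\subset H_0^{1,\Phi(\cdot)}(\Omega)$ gives $\divergence b=0$ in the sense of distributions.

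Next I would pass to a stream function. Since $d=2$ and $\Omega=(-1,1)^2$ is simply connected, $\divergence b=0$ yields $b=\nabla^\perp v=(-\partial_2 v,\partial_1 v)$ for some $v\in W^{1,1}(\Omega)$ with $\nabla v=(b_2,-b_1)\in L^{\Phi^*(\cdot)}(\Omega)$, i.e. (after subtracting a constant) $v\in W^{1,\Phi^*(\cdot)}(\Omega)$. The hypothesis now gives $v\in H^{1,\Phi^*(\cdot)}(\Omega)$, hence there are $v_k\in C^\infty(\Omega)\cap W^{1,\Phi^*(\cdot)}(\Omega)$ with $v_k\to v$, so that $b_k:=\nabla^\perp v_k\to b$ in $L^{\Phi^*(\cdot)}(\Omega;\setR^2)$.

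The crux is to show that each smooth solenoidal field $b_k$ annihilates \emph{all} of $W_0^{1,\Phi(\cdot)}(\Omega)$, and this is the main obstacle, since $v_k$ is smooth only in the interior of $\Omega$ (one cannot simply approximate $w$ by $C_0^\infty$, as that is the very density in question). I would reduce to compactly supported functions by dilation: for $w\in W_0^{1,\Phi(\cdot)}(\Omega)$ and $t\in(\tfrac12,1)$ set $w_t(x)=w(x/t)$ for $x\in t\Omega$ and $w_t=0$ otherwise. Since $\Omega$ is star-shaped with respect to the origin and the checkerboard weight $a$ is homogeneous of degree $0$ about the origin, one has $\Phi(tx,\cdot)=\Phi(x,\cdot)$, and a routine argument using $\Delta_2$ gives $\nabla w_t\to \nabla w$ in $L^{\Phi(\cdot)}(\Omega)$ as $t\to 1^-$. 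Each $w_t$ is supported in a compact subset of $\Omega$ on which $v_k$ is smooth, so integration by parts yields $\int_\Omega b_k\cdot\nabla w_t\,dx=-\int_\Omega (\divergence \nabla^\perp v_k)\,w_t\,dx=0$; letting $t\to 1^-$ and invoking the H\"older inequality in generalized Orlicz spaces ($b_k\in L^{\Phi^*(\cdot)}$, $\nabla w_t\to\nabla w$ in $L^{\Phi(\cdot)}$) gives $\int_\Omega b_k\cdot\nabla w\,dx=0$.

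Finally, passing $k\to\infty$ — again by the Orlicz H\"older inequality, using $\norm{b_k-b}_{L^{\Phi^*(\cdot)}(\Omega)}\to 0$ — yields $\int_\Omega b\cdot\nabla w\,dx=0$ for every $w\in W_0^{1,\Phi(\cdot)}(\Omega)$, so $b$ defines the zero functional, contradicting its choice. Hence $W_0^{1,\Phi(\cdot)}(\Omega)=H_0^{1,\Phi(\cdot)}(\Omega)$. (One could equally phrase the second and third steps using the matrix potential $A$ with $b=\divergence A$, as in Definition~\ref{def:zhikov}, but in dimension two the scalar stream function is more transparent.)
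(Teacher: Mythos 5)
Your argument follows the same contradiction scheme as the paper's proof (Hahn--Banach separation, solenoidality, stream function $v \in W^{1,\Phi^*(\cdot)}(\Omega)$, approximation by $v_k \in C^\infty(\Omega)$, and passage to the limit), and it is correct. A welcome bonus: you explicitly identify and justify the step $\int_\Omega \nabla^\perp v_k \cdot \nabla w\, dx = 0$ for $w \in W_0^{1,\Phi(\cdot)}(\Omega)$ -- which is genuinely not immediate because $v_k$ is smooth only in the interior of $\Omega$ -- via inward dilation and the $0$-homogeneity of the checkerboard weight, whereas the paper asserts this identity without justification.
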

\begin{proof}

 Suppose the contrary.  Then there exists~$u\in W_0^{1,\Phi^*}(\Omega)$ such that~$\nabla u$ does not belong to the closure of smooth compactly supported vector-valued functions in the gradient norm in~$(L^{\Phi(\cdot)})^2$ . Hence, there exists~$g$ such that~$g\in (L^{\Phi^*(\cdot)})^2$ and
 \begin{align*}
  &\int_\Omega g\nabla \phi\, dx=0 \quad\text{for all }\phi\in C_0^\infty(\Omega),\\
 &\int_\Omega g\nabla u\, dx\neq 0.
 \end{align*}
We see that the vector~$g$ is solenoidal. Therefore
\begin{align*}
 g=\nabla^\perp v,\quad v\in W^{1,\Phi^*(\cdot)}(\Omega).
\end{align*}
By the assumption of the lemma, there exists a sequence~$v_\epsilon\in C^\infty(\Omega)$ such that~$\nabla^\perp v_\epsilon\to g$ in~$(L^{\Phi^*(\cdot)})^2$. Since
\begin{align*}
 0=\int_\Omega \nabla^\perp v_\epsilon\nabla u\, dx\to \int_\Omega g\nabla u\, dx \neq 0,
\end{align*}
we arrive at a contradiction, which concludes the proof.
\end{proof}

The next statement is the counterpart of Corollary~\ref{corr:le1}.
\begin{corollary}\label{corr:remove1}
If 
$$
\int_{0}\psi^*\bigg(\frac 1 r\bigg) r\, dr=+\infty,
$$
then $H_0^{1,\Phi(\cdot)}(\Omega)=W_0^{1,\Phi(\cdot)}(\Omega)$ and $H^{1,\Phi(\cdot)}(\Omega)=W^{1,\Phi(\cdot)}(\Omega)$.
\end{corollary}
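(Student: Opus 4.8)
The plan is to bootstrap \emph{Corollary~\ref{corr:remove1}} from the ingredients already assembled, exactly mirroring the structure of \emph{Corollary~\ref{corr:le1}} but with the cutoff functions supplied by \emph{Lemma~\ref{lemma:remove1}} instead of \emph{Lemma~\ref{lemma:remove}}. First I would observe that any $u \in W^{1,\Phi(\cdot)}(\Omega)$ (respectively $W_0^{1,\Phi(\cdot)}(\Omega)$) may be assumed bounded, since the standard level-truncations $T_k u = \max(-k,\min(k,u))$ converge to $u$ in $W^{1,\Phi(\cdot)}$ by the $\triangle_2$-condition and dominated convergence; this reduction is identical to the one in \emph{Corollary~\ref{corr:le1}}. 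So fix $u$ with $\|u\|_{L^\infty(\Omega)} \le M < \infty$.

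Next, under the hypothesis $\int_0 \psi^*(1/r)\, r\, dr = +\infty$, \emph{Lemma~\ref{lemma:remove1}} provides cutoffs $\eta_\epsilon \in C^\infty(\overline\Omega)$ with $\eta_\epsilon \equiv 1$ outside $B_\epsilon(0)$, $\eta_\epsilon \equiv 0$ near the origin, $0 \le \eta_\epsilon \le 1$, and $\int_\Omega \Phi(x,\abs{\nabla\eta_\epsilon})\,dx \to 0$. Set $u_\epsilon = u\,\eta_\epsilon$. Then $u_\epsilon$ vanishes in a neighbourhood of the origin and lies in $W^{1,\Phi(\cdot)}(\Omega)$, so by \emph{Lemma~\ref{lemma:post1}} (the verbatim generalization of \emph{Lemma~\ref{lemma:post}}) we get $u_\epsilon \in H^{1,\Phi(\cdot)}(\Omega)$. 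It remains to show $u_\epsilon \to u$ in $W^{1,\Phi(\cdot)}(\Omega)$. Writing $\nabla u - \nabla u_\epsilon = -u\nabla\eta_\epsilon + (1-\eta_\epsilon)\nabla u$, the $\triangle_2$-property gives
\begin{align*}
\int_\Omega \Phi(x,\abs{\nabla(u-u_\epsilon)})\,dx \lesssim \int_\Omega \Phi\big(x, M\abs{\nabla\eta_\epsilon}\big)\,dx + \int_\Omega \Phi\big(x,(1-\eta_\epsilon)\abs{\nabla u}\big)\,dx.
\end{align*}
Applying $\triangle_2$ once more (finitely many doublings) the first integral is $\lesssim \int_\Omega \Phi(x,\abs{\nabla\eta_\epsilon})\,dx \to 0$ by \emph{Lemma~\ref{lemma:remove1}}, and the second tends to $0$ by the Lebesgue dominated convergence theorem, with dominating function $\Phi(x,\abs{\nabla u}) \in L^1(\Omega)$ since $\mathcal{F}(u) < \infty$. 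Hence $u \in H^{1,\Phi(\cdot)}(\Omega)$, proving $W^{1,\Phi(\cdot)}(\Omega) = H^{1,\Phi(\cdot)}(\Omega)$. For the zero-boundary statement the argument is identical: the truncations preserve zero boundary values, $u_\epsilon = u\eta_\epsilon \in W_0^{1,\Phi(\cdot)}(\Omega)$, and the second part of \emph{Lemma~\ref{lemma:post1}} gives $u_\epsilon \in H_0^{1,\Phi(\cdot)}(\Omega)$, so $W_0^{1,\Phi(\cdot)}(\Omega) = H_0^{1,\Phi(\cdot)}(\Omega)$.

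I do not expect any genuine obstacle here: all the analytic work is already done in \emph{Lemma~\ref{lemma:remove1}} (the construction of the $\psi$-harmonic cutoffs and the delicate scaling argument choosing $r_1 = r_1(r_2,\delta)$), and the present corollary is a clean packaging step. The one point to be careful about is that the absorption of the constant $M$ and the boundedly-many $\triangle_2$-doublings into the implicit constant does not spoil the limit --- but since $M$ is fixed before $\epsilon \to 0$ and $\Phi(x,\cdot)$ is increasing, $\Phi(x,M\abs{\nabla\eta_\epsilon}) \le \Phi(x,2^{\lceil\log_2 M\rceil}\abs{\nabla\eta_\epsilon}) \le c^{\lceil\log_2 M\rceil}\Phi(x,\abs{\nabla\eta_\epsilon})$ pointwise, so the bound is uniform in $\epsilon$ and the conclusion follows. (Alternatively, one could derive the $H_0 = W_0$ half from \emph{Lemma~\ref{lem:dual}} together with the $\Phi^*$-version of \emph{Lemma~\ref{lemma:remove1}}, but the direct route above is shorter and avoids checking that $\Phi^*$ inherits the structural hypotheses.)
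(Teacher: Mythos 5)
Your proof is correct and follows essentially the same route as the paper's: truncate to a bounded $u$, multiply by the cutoffs from Lemma~\ref{lemma:remove1}, invoke Lemma~\ref{lemma:post1} to place $u\eta_\epsilon$ in $H$ (resp.\ $H_0$), split $\nabla(u-u\eta_\epsilon)$ into the two standard terms, and kill them via the $\triangle_2$-absorption of the $L^\infty$ bound and dominated convergence respectively. The only cosmetic difference is that the paper passes through $\psi$ explicitly (using $\Phi\lesssim\psi$) before absorbing the constant, whereas you absorb directly into $\Phi$; both are valid since Lemma~\ref{lemma:remove1} already gives $\int_\Omega\Phi(x,\abs{\nabla\eta_\epsilon})\,dx\to 0$.
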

\begin{proof}
Let $\eta_\epsilon$ be the functions from Lemma~\ref{lemma:remove1}. Without loss of generality we can assume that~$u\in W_0^{1,\Phi(\cdot)} (\Omega)\cap L^\infty (\Omega)$. We set 
\begin{align*}
 u_\epsilon:=\eta_\epsilon u.
\end{align*}
By Lemma~\ref{lemma:post1} we have $u_\epsilon \in H_0^{1,\Phi(\cdot)}(\Omega)$. Now,
\begin{align*}
 \nabla (u_\epsilon-u)=u\nabla \eta_\epsilon+ (1-\eta_\epsilon) \nabla u.
\end{align*}
The second term converges to zero in $L^{\Phi(\cdot)}(\Omega)$ by the Lebesgue dominated convergence theorem. The first term is bounded by $\|u\|_\infty |\nabla \eta_\epsilon|$ and by the $\triangle_2$ condition and Lemma~\ref{lemma:remove1} we obtain
\begin{align*}
\int_\Omega \Phi (x, |u \nabla \eta_\epsilon|)\, dx &\lesssim \int_\Omega \psi (|u \nabla \eta_\epsilon|)\, dx  \\
&\lesssim \int_\Omega \psi (|\nabla \eta_\epsilon|)\, dx \to 0
\end{align*}
as $\epsilon \to 0$. Hence $\nabla u_\epsilon\to \nabla u $ in $L^{\Phi(\cdot)}(\Omega)$ and so $u_\epsilon \to u$ in $H_0^{1,\Phi(\cdot)}(\Omega)$. For $u\in W^{1,\Phi(\cdot)} (\Omega)\cap L^\infty (\Omega)$ the proof is the same.
\end{proof}

With these preparations in mind the proof of Theorem~\ref{thm:density1} is straightforward.

\begin{proof}[Proof of Theorem~\ref{thm:density1}]

Case~\ref{case1} is by Corollary~\ref{corr:remove1}.

Case~\ref{case2} follows by duality Lemma~\ref{lem:dual}. Indeed, $\Phi^*(x,t)  = \varphi^* (t)$ if $a(x)=0$ and $\Phi^*(x,t) \sim \psi^* (t)$ if $a(x)=1$. Clearly, $\psi^*(t) \lesssim\varphi^*(t)$ and~$\Phi^*(x,t)\lesssim \phi^*(t)$.  
  
For~$\Phi^*(x,t)$ the function $\varphi^*$ plays the role of $\psi$ and $\psi^*$ plays the role of $\varphi$. By  \eqref{thm2_cond2},
$$
\int_0 (\varphi^*)^* \bigg(\frac 1 r\bigg) r\, dr =\int_0 \varphi \bigg(\frac 1 r\bigg) r\, dr  = +\infty.
$$ 
By Corollary~\ref{corr:remove1} applied to~$\Phi^*(x,t)$ we have  $H^{1,\Phi^*(\cdot)}(\Omega) = W^{1,\Phi^*(\cdot)}(\Omega)$. The proof is concluded by Lemma~\ref{lem:dual}.
\end{proof}

%\bibliographystyle{plain}
%\bibliography{lavrentiev}
\end{document}